\tikzstyle{block}=[draw opacity=0.7,line width=1.4cm]
\newtheorem{theorem}{Theorem}[section]
\newtheorem{corollary}[theorem]{Corollary}
\newtheorem{lemma}[theorem]{Lemma}
\newtheorem{proposition}[theorem]{Proposition}
\newtheorem{definition}[theorem]{Definition}
\newtheorem{example}[theorem]{Example}
\newtheorem{remark}[theorem]{Remark}
\newtheorem{conjecture}{Conjecture}
\title{Spectral properties of the $n$-Queens' Graphs}
\author[1,2]{Domingos M. Cardoso}
\author[1,2]{In\^es Ser\^odio Costa}
\author[1,2]{Rui Duarte}
\affil[1]{\small Centro de Investiga\c{c}\~{a}o e Desenvolvimento em Matem\'atica e Aplica\c{c}\~{o}es}
\affil[2]{\small Departamento de Matem\'atica, Universidade de  Aveiro, 3810-193, Aveiro, Portugal.}
\begin{document}

\maketitle

\begin{abstract}
The $n$-Queens' graph, $\mathcal{Q}(n)$, is the graph associated to the $n \times n$ chessboard (a generalization of the classical $8 \times 8$ chessboard), with $n^2$ vertices, each one corresponding to a square of the chessboard. Two vertices of $\mathcal{Q}(n)$ are adjacent if and only if they are in the same row, in the same column or in the same diagonal of the chessboard. After a short overview on the main combinatorial properties of $\mathcal{Q}(n)$, its spectral properties are investigated. First, a lower bound on the least eigenvalue of an arbitrary graph is obtained using clique edge partitions and a sufficient condition for this lower bound be attained is deduced. For the particular case of $\mathcal{Q}(n)$, we prove that for every $n$, its least eigenvalue is not less than $-4$ and it is equal to $-4$ with multiplicity $(n-3)^2$, for every $n \ge 4$. Furthermore, $n-4$ is also an eigenvalue of $\mathcal{Q}(n)$, with multiplicity at least $\frac{n-2}{2}$ when $n$ is even and at least $\frac{n+1}{2}$ when $n$ is odd.  A conjecture about the integer eigenvalues of $\mathcal{Q}(n)$ is presented. We finish this article with an algorithm to determine an equitable partition of the $n$-Queens' graph, $\mathcal{Q}(n)$, for $n \ge 3$, concluding that such equitable partition has $\frac{(\lceil n/2\rceil+1)\lceil n/2\rceil}{2}$ cells.
\end{abstract}

\medskip

\noindent \textbf{Keywords: }{\footnotesize Queens' Graph, graph spectra, equitable partition.}

\smallskip

\noindent \textbf{MSC 2020:} {\footnotesize 05C15, 05C50, 05C69, 05C70.}

\section{Introduction}\label{sec1}

The problem of placing $8$ queens on a chessboard such that no two queens attack each other was first posed in 1848 by a German chess
player \cite{Bezzel}. The German mathematician and physicist Johann Carl Friedrich Gauss (1777--1855) had knowledge of this problem
and found $72$ solutions. However, according to \cite{BellStevens09}, the first to solve the problem by finding all $92$ solutions was
Nauck in \cite{Nauck}, in 1850. As claimed later by Gauss, this number is indeed the total number of solutions. The proof that there is
no more solutions was published by E. Pauls in 1874 \cite{Pauls}.

The $n$-Queens' problem is a generalization of the above problem, consisting of placing $n$ non attacking queens on $n \times n$ chessboard.
In \cite{Pauls} it was also proved that the $n$-Queens' problem has a solution for every $n \ge 4$. Each solution of the $n$-Queens' problem
corresponds to a permutation $\pi = (i_1 \; i_2 \; \dots \; i_n)$, defining the entries $(i_1,1)$, $(i_2,2)$, \ldots, $(i_n,n)$
which are the positions of the $n$ queens on the squares of the $n \times n$ chessboard. Then, we may say that the solutions of the $n$-Queens' problem are a few permutation among the $n!$ permutations of $n$ elements. The corresponding permutation matrices of order $n$ have no two
$1$s both on the main diagonal, both on the second diagonal or both on any other diagonal parallel to
one of these diagonals.\\

In \cite{GentJeffersenNightingale2017} it was proved that a variant of the $n$-Queens' problem (dating to 1850) called
$n$-Queens' completion problem is $\mathbf{NP}$-Complete. In the $n$-Queens' completion problem, assuming that some queens
are already placed, the question is to know how to place the rest of the queens, in case such placement be possible. After
the publication of this result, the $n$-Queens' completion problem has deserved a growing interest on its research. Probably, the motivation is that some researchers believe in the existence of a polynomial-time algorithm to
solve this problem (see \cite{Dmitrii2018}). Therefore, if such an algorithm is found, then the problem that asks whether
$\mathbf{P}$ is equal to $\mathbf{NP}$ is solved. This problem belongs to the list of the Millenium Prize Problems stated
by the Clay Mathematics Institute which awards one million dollars to anyone who finds a solution to any of the seven
problems of the list. So far only one of these problems (the Poincar\'e conjecture) has been solved.

The graph associated to the $n$-Queens' problem, called the $n$-\textit{Queens' graph}, is obtained from the
$n \times n$ chessboard considering its squares as vertices of the graph with two of them adjacent if and only
if they are in the same row, column or diagonal of the chessboard. It is immediate that each solution of the
$n$-Queens' graph corresponds to a maximum stable set (also known as maximum independent vertex set) of the
$n$-Queens' graph. Furthermore, the $n$-Queens' completion problem corresponds to the determination of a maximum
stable set which includes a particular stable set when such maximum stable set exists or the conclusion that
there is no maximum stable set in such conditions.\\

Our focus in this article is neither the $n$-Queens' problem nor the $n$-Queens' completion problem.
The main goal is to study the properties of the $n$-Queens' graph, especially its spectral properties.\\

In Section~\ref{sec2} a short overview on the main combinatorial properties of $\mathcal{Q}(n)$ is presented.
Namely, the stability, clique, chromatic and domination numbers are analyzed.

Section~\ref{sec3} is devoted to the investigation of the spectral properties of $\mathcal{Q}(n)$. A lower
bound on the least eigenvalue of an arbitrary graph is obtained using clique edge partitions and a sufficient
condition for this lower bound be attained is deduced. For the particular case of $\mathcal{Q}(n)$, we prove
that for every $n$, its least eigenvalue is not less than $-4$ and it is equal to $-4$ with multiplicity $(n-3)^2$,
for every $n \ge 4$. Furthermore, $n-4$ is also an eigenvalue of $\mathcal{Q}(n)$, with multiplicity at least
$\frac{n-2}{2}$ when $n$ is even and at least $\frac{n+1}{2}$ when $n$ is odd. We finish this section with a
conjecture about the integer eigenvalues of $\mathcal{Q}(n)$.

In Section~\ref{sec4} an algorithm to determine an equitable partition of the $n$-Queens' graph, $\mathcal{Q}(n)$, with $n \ge 3$, is introduced. This equitable partition has $\frac{(\lceil n/2\rceil+1)\lceil n/2\rceil}{2}$ cells. \\

From now on, the $n \times n$ chessboard is herein denoted by $\mathcal{T}_n$ and the associated graph by $\mathcal{Q}(n)$.
The $n$-Queens' graph, $\mathcal{Q}(n)$, associated to the $n \times n$ chessboard $\mathcal{T}_n$ has $n^2$ vertices,
each one corresponding to a square of the $n \times n$ chessboard. Two vertices of $\mathcal{Q}(n)$ are \textit{adjacent},
that is, linked by an edge if and only if they are in the same row, in the same column or in the same diagonal of the
chessboard. The squares of $\mathcal{T}_n$ and the corresponding vertices in $\mathcal{Q}(n)$ are labeled from the left
to the right and from the top to the bottom. For instance, the squares of $\mathcal{T}_4$ (vertices of $\mathcal{Q}(4)$)
are labelled as depicted in the next Figure.

\begin{figure}[h!]
	\centering
	{	\setchessboard{smallboard}
		\chessboard[
		pgfstyle=
		{[base,at={\pgfpoint{0pt}{-0.4ex}}]text},maxfield=d4,label=false,showmover=false,text= \fontsize{1.2ex}{1.2ex}\bfseries{13},markregions={a1-a1},text= \fontsize{1.2ex}{1.2ex}\bfseries{9},markregions={a2-a2},text= \fontsize{1.2ex}{1.2ex}\bfseries{5},markregions={a3-a3},text= \fontsize{1.2ex}{1.2ex}\bfseries{1},markregions={a4-a4},text= \fontsize{1.2ex}{1.2ex}\bfseries{14},markregions={b1-b1},text= \fontsize{1.2ex}{1.2ex}\bfseries{10},markregions={b2-b2},text= \fontsize{1.2ex}{1.2ex}\bfseries{6},markregions={b3-b3},text= \fontsize{1.2ex}{1.2ex}\bfseries{2},markregions={b4-b4},text= \fontsize{1.2ex}{1.2ex}\bfseries{15},markregions={c1-c1},text= \fontsize{1.2ex}{1.2ex}\bfseries{11},markregions={c2-c2},text= \fontsize{1.2ex}{1.2ex}\bfseries{7},markregions={c3-c3},text= \fontsize{1.2ex}{1.2ex}\bfseries{3},markregions={c4-c4},text= \fontsize{1.2ex}{1.2ex}\bfseries{16},markregions={d1-d1},text= \fontsize{1.2ex}{1.2ex}\bfseries{12},markregions={d2-d2},text= \fontsize{1.2ex}{1.2ex}\bfseries{8},markregions={d3-d3},text= \fontsize{1.2ex}{1.2ex}\bfseries{4},markregions={d4-d4}]}
	\caption{Labeling of $\mathcal{T}_4$.}\label{chessboard4}
\end{figure}

The vertex set and the edge set of $\mathcal{Q}(n)$ are denoted by $V(\mathcal{Q}(n))$ and $E(\mathcal{Q}(n))$, respectively.
The \text{order} of $\mathcal{Q}(n)$ is the cardinality of $V(\mathcal{Q}(n))$, $n^2$, and the \text{size} of $\mathcal{Q}(n)$ is the cardinality of $E(\mathcal{Q}(n))$ and is denoted by $e(\mathcal{Q}(n))$.

\section{A short overview on the main combinatorial properties of $\mathcal{Q}(n)$}\label{sec2}
We start this section by recalling some classical concepts of graph theory.

Given a graph $G$, a \textit{stable} set (resp. \textit{clique}) of $G$ is a vertex subset where every two vertices
are not adjacent (resp. are adjacent). The \textit{stability} (resp. \textit{clique}) number of $G$, $\alpha(G)$ (resp.
$\omega(G)$), is the cardinality of a stable set (resp. clique) of maximum cardinality. A proper coloring of the vertices
of $G$ is a function $\varphi: V(G) \rightarrow C$, where $C$ is a set of colors, such that $\varphi(x) \ne \varphi(y)$
whenever $x$ is adjacent to $y$, that is, $xy \in E(G)$. The chromatic number of $G$, $\chi(G)$, is the minimum cardinality
of $C$ for which there is a proper coloring $\varphi: V(G) \to C$ of the vertices of $G$. From now on, a proper coloring
of the vertices of a graph $G$ is just called vertex coloring of $G$.
A vertex $v \in V(G)$, \textit{dominates} itself and all its neighbors. A vertex set $S \subset V(G)$
is a \textit{dominating} set if every vertex of $G$ is dominated by at least one vertex of $S$. The \textit{domination number}
of a graph $G$, $\gamma(G)$, is the cardinality of a dominating set in $G$ with minimum cardinality.\\

Regarding the \textit{diameter} of $\mathcal{Q}(n)$, $\text{diam}(\mathcal{Q}(n))$, which is the maximum distance
among the distances between every pair of vertices, it is immediate that $\mbox{diam}(\mathcal{Q}(n)) = 2$, for $n>2$
and $\mbox{diam} (\mathcal{Q}(n)) = 1$ for $n \in \{ 1, 2 \}$.

In this section, the size, vertex degrees and average degree, the stability, clique, chromatic and domination numbers of
$\mathcal{Q}(n)$ are analyzed.

\subsection{Size, vertex degrees and average degree}
Taking into account that two vertices of $\mathcal{Q}(n)$ are linked by an edge if and only if they are in the same
row, column or diagonal, it follows that the size of $\mathcal{Q}(n)$ can be obtained by the expression
\begin{eqnarray}
e(\mathcal{Q}(n)) &=& 2(n+1) \binom{n}{2} + 4\left( \binom{2}{2} + \cdots + \binom{n-1}{2} \right) \nonumber\\
                  &=& 2(n+1) \binom{n}{2} + 4 \binom{n}{3} \label{hockey-stick}\\
                  &=& \frac{n(n-1)(5n-1)}{3}. \label{size}
\end{eqnarray}
The expression \eqref{hockey-stick} is obtained using the hockey-stick identity twice.

The \textit{degree} of a vertex $v$, $d_v$, is the number of its \textit{neighbors}, that is, the number of vertices adjacent to $v$.

By the Handshaking Lemma, the \textit{average degree} of a graph $G$ of order $n$ is $\overline{d}_G=\frac{2e(G)}{n}$. From \eqref{size} we get
\begin{equation}
\overline{d}_{\mathcal{Q}(n)}=\frac{2e(\mathcal{Q}(n))}{n^2}=\frac{2(n-1)(5n-1)}{3n}.\label{everage_degree}
\end{equation}

A closed formula (in terms of $n$) for the degrees of the vertices of $\mathcal{Q}(n)$ can be obtained from the structure of these graphs. Before that, we need to introduce some additional notation for the elements of the following partition of $V\left(\mathcal{Q}(n)\right)$.

\begin{enumerate}
\item[$1.$] The \textit{first peripheral vertex subset} $V_1$ is the vertex subset corresponding to the more peripheral
            squares of the board $\mathcal{T}_n$;
\item[$2.$] The \textit{second peripheral vertex subset} $V_2$ is the vertex subset corresponding to the more peripheral squares of the
            board $\mathcal{T}_n$ without considering the vertices in $V_1$;
\item[$\,$] $\vdots$
\item[$\lfloor \frac{n+1}{2} \rfloor.$] The $\lfloor \frac{n+1}{2} \rfloor$-\textit{th peripheral vertex subset}
            $V_{\lfloor \frac{n+1}{2} \rfloor}$ is the vertex subset corresponding to the squares of the board
            $\mathcal{T}_n$, without considering the vertices in
            $V_1 \cup V_2 \cup \dots \cup V_{\lfloor\frac{n+1}{2} \rfloor-1}$.
\end{enumerate}

Clearly, the vertex subsets $V_1, V_2, \dots,  V_{\lfloor  \frac{n+1}{2} \rfloor}$ form a partition of
$V\left(\mathcal{Q}(n)\right)$. For instance, for $n=5$ (see the chessboard $T_5$ depicted in
Figure~\ref{tab_chessboard5}, where the labels of the squares are the labels of the corresponding vertices of
$\mathcal{Q}(n)$), $\lfloor \frac{5+1}{2} \rfloor = 3$ and
\begin{enumerate}
\item $V_1=\{1, 2, 3, 4, 5, 10, 15, 20, 25, 24, 23, 22, 21, 16, 11, 6\}$,
\item $V_2=\{7, 8, 9, 14, 19, 18, 17, 12\}$,
\item $V_3=\{13\}$.
\end{enumerate}

\begin{figure}[h!]
	\centering
	{
	\setchessboard{smallboard}
	\chessboard[
	pgfstyle=
	{[base,at={\pgfpoint{0pt}{-0.4ex}}]text},maxfield=e5,labelbottomformat=\arabic{filelabel},showmover=false,text= \fontsize{1.2ex}{1.2ex}\bfseries{1},markregions={a5-a5},text= \fontsize{1.2ex}{1.2ex}\bfseries{2},markregions={b5-b5},text= \fontsize{1.2ex}{1.2ex}\bfseries{3},markregions={c5-c5},text= \fontsize{1.2ex}{1.2ex}\bfseries{4},markregions={d5-d5},text= \fontsize{1.2ex}{1.2ex}\bfseries{5},markregions={e5-e5},text= \fontsize{1.2ex}{1.2ex}\bfseries{6},markregions={a4-a4},text= \fontsize{1.2ex}{1.2ex}\bfseries{7},markregions={b4-b4},text= \fontsize{1.2ex}{1.2ex}\bfseries{8},markregions={c4-c4},text= \fontsize{1.2ex}{1.2ex}\bfseries{9},markregions={d4-d4},text= \fontsize{1.2ex}{1.2ex}\bfseries{10},markregions={e4-e4},text= \fontsize{1.2ex}{1.2ex}\bfseries{11},markregions={a3-a3},text= \fontsize{1.2ex}{1.2ex}\bfseries{12},markregions={b3-b3},text= \fontsize{1.2ex}{1.2ex}\bfseries{13},markregions={c3-c3},text= \fontsize{1.2ex}{1.2ex}\bfseries{14},markregions={d3-d3},text= \fontsize{1.2ex}{1.2ex}\bfseries{15},markregions={e3-e3},text= \fontsize{1.2ex}{1.2ex}\bfseries{16},markregions={a2-a2},text= \fontsize{1.2ex}{1.2ex}\bfseries{17},markregions={b2-b2},text= \fontsize{1.2ex}{1.2ex}\bfseries{18},markregions={c2-c2},text= \fontsize{1.2ex}{1.2ex}\bfseries{19},markregions={d2-d2},text= \fontsize{1.2ex}{1.2ex}\bfseries{20},markregions={e2-e2},text= \fontsize{1.2ex}{1.2ex}\bfseries{21},markregions={a1-a1},text= \fontsize{1.2ex}{1.2ex}\bfseries{22},markregions={b1-b1},text= \fontsize{1.2ex}{1.2ex}\bfseries{23},markregions={c1-c1},text= \fontsize{1.2ex}{1.2ex}\bfseries{24},markregions={d1-d1},text= \fontsize{1.2ex}{1.2ex}\bfseries{25},markregions={e1-e1}]}
	\caption{The chessboard $T_5$.}\label{tab_chessboard5}
\end{figure}

Regarding the cardinality of those peripheral vertex subsets it follows that
\begin{eqnarray}
\left| V_i \right|  &=& (n-2(i-1))^2 - (n-2(i-1)-2)^2 \nonumber\\
                    &=& \left( (n-2(i-1)) - (n-2(i-1)-2) \right) \left( (n-2(i-1)) + (n-2(i-1)-2) \right) \nonumber\\
                    &=& 2 (2n-4(i-1)-2) = 4 (n-(2i-1)), \;\; i=1 , 2, \dots, \left\lfloor \frac{n+1}{2} \right\rfloor - 1, \label{marca_1}\\
\left| V_{\left\lfloor \frac{n+1}{2} \right\rfloor} \right| &=& \left\{\begin{array}{ll}
                                                                        4, & \hbox{whether $n$ is even}, \label{marca_2}\\
                                                                        1, & \hbox{otherwise.}
                                                                       \end{array}\right.
\end{eqnarray}

For the sake of simplicity of the proof of the next theorem, it is worth to consider the chessboard $\mathcal{T}_n$ as a
$n \times n$ matrix, were the coordinates $(p,q)$ of a square means that it is in the $p$-th row (from the top to bottom)
and $q$-th column (from the left to right).

\begin{theorem}\label{degrees_theorem}
Considering the partition of $V(\mathcal{Q}(n))$ into the peripheral vertex subsets $V_1, V_2, \dots, V_{\lfloor \frac{n+1}{2} \rfloor}$,
the degrees of the vertices of $\mathcal{Q}(n)$ are
\[
d_v = 3(n-1)+2(j-1), \qquad  \text{ for all } v \in V_j, \;  j=1, 2, \dots, \left\lfloor \frac{n+1}{2} \right\rfloor.
\]
\end{theorem}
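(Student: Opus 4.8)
The plan is to compute the degree of a vertex directly from its position on the board, exploiting the fact that a vertex $v$ corresponding to square $(p,q)$ is adjacent to precisely the other squares in its row, its column, its ``$\searrow$'' diagonal (constant $p-q$) and its ``$\nearrow$'' diagonal (constant $p+q$). The row contributes $n-1$ neighbours, the column contributes another $n-1$, and these two sets are disjoint (they meet only in $v$ itself). So the whole content of the theorem is in counting the diagonal neighbours and checking that the formula collapses to depend only on the peripheral layer index $j$.

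\medskip

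\noindent\textbf{Key steps.} First I would set up coordinates: for $v=(p,q)$, the number of squares on the $\searrow$-diagonal through $v$ other than $v$ is $(\min(p,q)-1)+(\min(n+1-p,n+1-q)-1)$, i.e.\ the number of steps one can go up-left plus the number one can go down-right; similarly the $\nearrow$-diagonal contributes $(\min(p,n+1-q)-1)+(\min(n+1-p,q)-1)$, counting steps up-right and down-left. Adding the four contributions,
\[
d_v \;=\; 2(n-1) + \bigl[\min(p,q)+\min(n{+}1{-}p,n{+}1{-}q)+\min(p,n{+}1{-}q)+\min(n{+}1{-}p,q)\bigr]-4 .
\]
Second, I would observe that membership of $v$ in $V_j$ is exactly the statement that the distance of the square $(p,q)$ to the boundary of $\mathcal{T}_n$ equals $j-1$; concretely, writing $a=\min(p,n+1-p)$ and $b=\min(q,n+1-q)$ (the row-distance and column-distance to the nearer edge, each between $1$ and $\lceil n/2\rceil$), one has $v\in V_j$ iff $\min(a,b)=j$ — so I should double-check the off-by-one and confirm that with the paper's indexing $j$ runs from $1$ and $\min(a,b)=j$. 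Third, I would evaluate the bracketed sum of four minima in terms of $a$ and $b$: each of $\min(p,q)$ and the others picks out, for the two coordinates, either the ``near-edge'' value or the ``far-edge'' value, and a short case analysis (according to which of $p,n+1-p$ is smaller, and likewise for $q$) shows the four minima always sum to $2a+2b$. Hence $d_v = 2(n-1) + 2a + 2b - 4$. Finally, since $v\in V_j$ forces $\min(a,b)=j$, the larger of $a,b$ can still vary — but here I would invoke the structure already described in the excerpt: within a fixed peripheral layer $V_j$ every square lies on the ``frame'' at depth $j-1$, and one checks that for frame squares the sum $a+b$ is \emph{not} constant in general\dots

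\medskip

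\noindent\textbf{The main obstacle} is precisely this last point: the naive ``sum of two edge-distances'' is not constant on a peripheral layer (a corner-of-the-frame square has $a=b=j$, giving $a+b=2j$, whereas a mid-edge square of the same frame has one coordinate-distance equal to $j$ and the other as large as $\lceil n/2\rceil$). So the formula $d_v=2(n-1)+2a+2b-4$ cannot be the right reduction, which means the four-minima sum does \emph{not} simplify to $2a+2b$ — I must redo that case analysis more carefully. The correct statement should be that the sum of the four minima equals $2n+2-2\max(a,b)+\text{(something)}$, arranged so that the $\max$ cancels and only $\min(a,b)=j$ survives. Concretely I expect the sum of the four diagonal-step counts (before adding the $2(n-1)$ row/column terms) to work out to $2(j-1)$ after the $-4$ correction, giving the claimed $d_v = 3(n-1)+2(j-1)$; equivalently the four minima sum to $n+1+2j-2\cdot 1$ in a way I will pin down by splitting into the cases $p\le \tfrac{n+1}{2}$ vs.\ $p>\tfrac{n+1}{2}$ and $q\le\tfrac{n+1}{2}$ vs.\ $q>\tfrac{n+1}{2}$ (four cases, symmetric under the board's reflections, so really one case up to symmetry). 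Once that single case is computed, the theorem follows; I would also sanity-check against $n=5$ using the explicit $V_1,V_2,V_3$ listed in the excerpt (e.g.\ the centre vertex $13$ should get $d=3\cdot4+2\cdot2=16$, a full $n-1$ in each of the four lines through it, which is consistent since $16=4(n-1)$ for $n=5$) and against the average degree formula \eqref{everage_degree} by summing $\sum_j |V_j|\,(3(n-1)+2(j-1))$ using \eqref{marca_1}.
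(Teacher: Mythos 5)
Your overall strategy---row and column give $2(n-1)$ neighbours, and the four diagonal arms from $(p,q)$ have lengths $\min(p,q)-1$, $\min(n{+}1{-}p,n{+}1{-}q)-1$, $\min(p,n{+}1{-}q)-1$, $\min(n{+}1{-}p,q)-1$---is sound, and it is essentially the same counting as the paper's proof (the paper simply normalises, by symmetry, to a vertex on the top side of the $i$-th frame, where the four arm lengths become $i-1$, $i-1$, $q-1$, $n-q$ explicitly). The genuine gap is that the one computation carrying all the content is never actually carried out, and both formulas you offer for it are wrong. The claimed identity ``the four minima sum to $2a+2b$'' fails already at a corner (for $n=5$ and $(p,q)=(1,1)$ the four minima are $1,5,1,1$, summing to $8$, not $4$); you notice this yourself, but your corrected guess ``the four minima sum to $n+1+2j-2\cdot 1$'' is also off (by $2$), and you leave the decisive case analysis as something you ``will pin down''. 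As written, the argument stops exactly at its crucial step.

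The missing step is short. By the two reflection symmetries of the board you may assume $p\le n+1-p$ and $q\le n+1-q$, so $a=p$ and $b=q$. Then $\min(p,q)=\min(a,b)$ and $\min(n+1-p,n+1-q)=n+1-\max(a,b)$, while $p\le\frac{n+1}{2}\le n+1-q$ and $q\le\frac{n+1}{2}\le n+1-p$ force the two mixed minima to equal $p=a$ and $q=b$. Hence the four minima sum to $\min(a,b)+\bigl(n+1-\max(a,b)\bigr)+a+b=n+1+2\min(a,b)$, not $2a+2b$; the dependence on $\max(a,b)$ cancels, which is precisely why the degree is constant on each layer. Since $v\in V_j$ if and only if $\min(a,b)=j$ (your off-by-one worry is unfounded with the paper's indexing), this gives $d_v=2(n-1)+(n+1+2j)-4=3(n-1)+2(j-1)$, as claimed. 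With this identity inserted your argument becomes a complete proof, differing from the paper's only in that it treats a general square through the min-formula instead of reducing to one side of the frame.
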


\begin{proof}
For $v \in V_1$, it is immediate that $d_v=3(n-1)$. On the other hand, for $v \in V_i$, with
$i = \lfloor \frac{n+1}{2} \rfloor$, it is also immediate to conclude that
$$
d_v = \left\{\begin{array}{ll}
               4(n-1) = 2(n-1) +2(\lfloor \frac{n+1}{2} \rfloor -1), & \hbox{if $n$ is odd};\\
               3(n-1) + n-2 = 3(n-1)  +2(\lfloor \frac{n+1}{2} \rfloor -1), & \hbox{otherwise.}
              \end{array}\right.
$$

Let us consider $i$ such that $1 < i < \lfloor \frac{n+1}{2} \rfloor$ and that $v \in V_i$ corresponds to the square of the chessboard $\mathcal{T}_n$ determined by the pair of coordinates $(p,q)$. Then
$(p,q) \in X_{i,i} \cup X_{i,(n-(i-1))} \cup X_{n-(i-1),n-(i-1)} \cup X_{(n-(i-1)),i}$, where
\begin{eqnarray*}
X_{i,i}                &=& \{(i,j) \in[n]^2 \mid   i \le j \le n-i\}\\
X_{i,(n-(i-1))}        &=& \{(j,n-(i-1))  \in[n]^2 \mid  i \le j \le n-i\}\\
X_{n-(i-1),n-(i-1)}    &=& \{(n-(i-1),j) \in[n]^2 \mid i+1 \le j \le n-(i-1)\}\\
X_{n-(i-1),i}          &=& \{(j,i) \in[n]^2 \mid i+1 \le j \le n-(i-1)\}.
\end{eqnarray*}

Assuming $(p,q) \in X_{i,i}$, then $p=i$, $i \le q \le n-i$ and $(p,q)$ has $2(n-1)$ neighbors corresponding to
the squares of the row and column of $\mathcal{T}_n$ whose intersection is $(p,q)$ plus the vertices corresponding
to the diagonal neighbors defined by the following pairs $(x,y)$:
\begin{description}
\item \hspace{1.5cm} $\underbrace{(i+1,q-1), (i+2,q-2), \dots, (i+(q-1),q-(q-1))}_{q-1 \text{ lower right to left diagonal neighbors}}$,
\item \hspace{1.5cm} $\underbrace{(i+1,q+1), (i+2,q+2), \dots, (i+(n-q),q+(n-q))}_{n-q \text{ lower left to right diagonal neighbors}}$,
\item \hspace{1.5cm} $\underbrace{(i-1,q-1), (i-2,q-2), \dots, (i-(i-1),q-(i-1))}_{i-1 \text{ upper right to left diagonal neighbors}}$,
\item \hspace{1.5cm} $\underbrace{(i-1,q+1), (i-2,q+2), \dots, (i-(i-1),q+(i-1))}_{i-1 \text{ upper left to right diagonal neighbors}}$.
\end{description}
Therefore, the total number of neighbors of $(p,q)$ is
$$
d_{(p,q)} = 2(n-1) + q-1 + n-q + 2(i-1) = 3(n-1) + 2(i-1).
$$

For the vertices $(p,q)$ in any other of the subsets $X_{i,(n-(i-1))}$, $X_{(n-(i-1)),(n-(i-1))}$ and $X_{(n-(i-1)),i}$,
by symmetry, the result is the same.
\end{proof}

As immediate consequence, we have the following corollary of Theorem~\ref{degrees_theorem}.

\begin{corollary}
For all vertices $v$ of $\mathcal{Q}(n)$,
\begin{equation}
3(n-1) \le d_v \le  \left\{\begin{array}{ll}
                            4n-5,   & \hbox{if $n$ is even;} \\
                            4n-4, & \hbox{if $n$ is odd.}
                           \end{array}\right. \label{min_max_degree}
\end{equation}
\end{corollary}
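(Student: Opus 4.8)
This corollary is an immediate consequence of the theorem, so the proof should just verify the bounds by plugging in the extreme values of $j$ in the formula from Theorem~\ref{degrees_theorem}.

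The plan is straightforward. By Theorem~\ref{degrees_theorem}, for $v \in V_j$ we have $d_v = 3(n-1) + 2(j-1)$, where $j$ ranges over $1, 2, \dots, \lfloor \frac{n+1}{2} \rfloor$. Since this is an increasing function of $j$, the minimum degree is attained at $j = 1$, giving $d_v = 3(n-1)$, and the maximum is attained at $j = \lfloor \frac{n+1}{2} \rfloor$. So the only real work is to evaluate $3(n-1) + 2\left(\lfloor \frac{n+1}{2} \rfloor - 1\right)$ in the two parity cases.

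First I would handle the odd case: if $n$ is odd, then $\lfloor \frac{n+1}{2} \rfloor = \frac{n+1}{2}$, so $3(n-1) + 2\left(\frac{n+1}{2} - 1\right) = 3(n-1) + (n-1) = 4(n-1) = 4n-4$. Next, the even case: if $n$ is even, then $\lfloor \frac{n+1}{2} \rfloor = \frac{n}{2}$, so $3(n-1) + 2\left(\frac{n}{2} - 1\right) = 3(n-1) + (n-2) = 4n - 5$. Combining the lower bound $3(n-1)$ (always attained, e.g.\ by corner vertices) with these two upper values gives exactly the displayed inequality \eqref{min_max_degree}.

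There is essentially no obstacle here; the statement is purely computational given the theorem. The only point requiring a word of care is noting that the degree formula is monotone in $j$ so that the extremes over the index range do give the global extremes over all vertices, and that both bounds are in fact attained (the lower by $V_1$, the upper by $V_{\lfloor (n+1)/2 \rfloor}$), which is already recorded in the proof of Theorem~\ref{degrees_theorem}. One could optionally remark that this also re-derives the special values computed there for the central cell(s).
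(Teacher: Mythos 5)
Your proof is correct and matches the paper's intent exactly: the corollary is stated there as an immediate consequence of Theorem~\ref{degrees_theorem}, and your evaluation of $3(n-1)+2(j-1)$ at $j=1$ and $j=\lfloor\frac{n+1}{2}\rfloor$ in the two parity cases is precisely the computation that justifies it.
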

Therefore, the \textit{minimum} and \textit{maximum} degrees of $\mathcal{Q}(n)$ are $\delta(\mathcal{Q}(n)) = \mbox{3(n-1)}$ and
$\Delta(\mathcal{Q}(n))=\left\{\begin{array}{ll}
                                 4n-5,   & \hbox{if $n$ is even;} \\
                                 4n-4, & \hbox{if $n$ is odd,}
                                \end{array}\right.$ respectively.

\subsection{Stability, clique, chromatic and domination numbers}
\subsubsection{The stability and clique numbers}
For $n = 2, 3$, it is clear that $\alpha(\mathcal{Q}(n))$ is equal to $1$ and $2$, respectively. Taking into
account that every solution of the $n$-Queens' problem corresponds to a maximum stable set and, as proved in
\cite{Pauls}, the $n$-Queens' problem has a solution for every $n \ge 4$, the next proposition follows.

\begin{proposition}
The stability number of the $n$-Queens' graph is
\[
\alpha(\mathcal{Q}(n)) = \left\{\begin{array}{ll}
                                   1, & \hbox{if } n=2;\\
                                   2, & \hbox{if } n=3;\\
                                   n, & \hbox{if } n \ge 4.
                                  \end{array}\right.
\]
\end{proposition}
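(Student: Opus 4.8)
The plan is to recast the statement in terms of stable sets and then combine an elementary counting bound with the classical solvability of the $n$-Queens' problem. Recall that a stable set of $\mathcal{Q}(n)$ is precisely a set of mutually non-attacking queens on $\mathcal{T}_n$, so $\alpha(\mathcal{Q}(n))$ is the maximum number of such queens; the whole argument amounts to pinning this number between an easy upper bound and an easy lower bound.

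First I would establish the upper bound $\alpha(\mathcal{Q}(n)) \le n$, valid for every $n$. This is immediate from the structure of $\mathcal{Q}(n)$: the $n$ squares of any fixed row of $\mathcal{T}_n$ are pairwise in the same row, hence induce a clique $K_n$ in $\mathcal{Q}(n)$; a stable set meets each such clique in at most one vertex, and the $n$ rows partition $V(\mathcal{Q}(n))$, so every stable set has at most $n$ vertices.

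Next I would dispose of the two small boards by direct inspection. For $n=2$ we have $\mathcal{Q}(2) \cong K_4$, since every two of the four squares share a row, a column or a diagonal; hence $\alpha(\mathcal{Q}(2)) = 1$. For $n=3$ the bound above gives $\alpha(\mathcal{Q}(3)) \le 3$, but a stable set of size $3$ would have exactly one queen in each row, so the columns would form a permutation of $\{1,2,3\}$, and checking the six permutations shows each produces two queens on a common diagonal; thus $\alpha(\mathcal{Q}(3)) \le 2$, while the squares with coordinates $(1,1)$ and $(2,3)$ are non-adjacent, giving $\alpha(\mathcal{Q}(3)) = 2$.

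Finally, for $n \ge 4$, I would invoke the result of \cite{Pauls}, already quoted in the Introduction, that the $n$-Queens' problem has a solution for every $n \ge 4$: such a solution is a placement of $n$ pairwise non-attacking queens, that is, a stable set of size $n$ in $\mathcal{Q}(n)$. Together with the upper bound $\alpha(\mathcal{Q}(n)) \le n$ this forces $\alpha(\mathcal{Q}(n)) = n$, completing the proof. There is no genuine obstacle in this argument; the only non-elementary ingredient is the external existence theorem for solutions of the $n$-Queens' problem, and the overall reasoning is essentially the one already sketched in the paragraph preceding the statement.
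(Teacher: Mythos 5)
Your proof is correct and follows essentially the same route as the paper: the small cases $n=2,3$ are settled by direct inspection, and for $n\ge 4$ the value $n$ follows from the trivial row-clique upper bound together with the solvability of the $n$-Queens' problem proved in \cite{Pauls}. The only difference is that you spell out explicitly the upper bound and the $n=3$ case, which the paper leaves as "clear."
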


The next theorem states the clique number of the $n$-Queens' graph.

\begin{theorem}
The clique number of the $n$-Queens' graph is
\[
\omega(\mathcal{Q}(n)) = \left\{\begin{array}{ll}
                                   4, & \hbox{if } n=2;\\
                                   5, & \hbox{if } n \in \{3, 4\};\\
                                   n, & \hbox{if } n \ge 5.
                                  \end{array}\right.
\]
\end{theorem}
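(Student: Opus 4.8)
The statement divides according to whether a maximum clique of $\mathcal{Q}(n)$ is \emph{collinear}, i.e.\ all of its squares lie on one common row, column or diagonal, or not. The lower bounds are the easy half. For $n\ge 5$ a full row is a clique of size $n$; for $n\in\{3,4\}$ the five squares consisting of the centre of a $3\times 3$ sub‑board together with the midpoints of its four sides are pairwise attacking (the centre shares a row with two of them, a column with the other two, and the midpoints of adjacent sides share a diagonal), so it is a clique of size $5$, and it fits inside $\mathcal{T}_n$ as soon as $n\ge 3$; finally $\mathcal{Q}(2)$ is $K_4$, so $\omega(\mathcal{Q}(2))=4$. Hence the whole content of the theorem is the matching \emph{upper} bound, which I would obtain from the following lemma: \emph{every clique of $\mathcal{Q}(n)$ that is not contained in a single line has at most $5$ vertices.}

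To prove the lemma, let $C$ be a clique and look at the lines (rows, columns, and diagonals of both directions) that meet $C$. If every such line contains at most two vertices of $C$, fix $v\in C$: each of the four lines through $v$ then contains at most one vertex of $C$ other than $v$, and every other vertex of $C$ lies on one of them, so $|C|\le 5$. Otherwise some line $\ell$ carries at least three vertices of $C$; since $C$ is assumed not to lie on one line, pick $w\in C\setminus\ell$. The key computation is that $w$, being adjacent to every vertex of $C$ on $\ell$ but not lying on $\ell$, must reach each of them through one of its \emph{other} three lines, and the set of positions on $\ell$ reachable from $w$ in this way has size at most $3$ — for a row it is $\{y'-|k|,\,y',\,y'+|k|\}$, where $w=(\rho',y')$ and $k=\rho'-\rho\ne 0$; for a diagonal it is an analogous three‑element set — and, when it has size $3$, it is a $3$‑term arithmetic progression whose middle term is determined by $w$. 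Consequently $\ell$ carries exactly three vertices of $C$, forming such a progression, and if $w_1,w_2\in C\setminus\ell$ then each of them is forced onto one of the two squares "opposite" the extreme vertices of that progression; hence $|C\setminus\ell|\le 2$ and $|C|\le 5$. The "$\ell$ a row/column" and "$\ell$ a diagonal" cases are verified by the same elementary calculation, with the equation $|y-y'|=|k|$ replaced by the corresponding linear equations in the diagonal case; the two diagonal directions are swapped by a reflection of the board, so only one of them needs to be written out.

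Assembling: any clique $C$ of $\mathcal{Q}(n)$ is either contained in a single line, whence $|C|$ is at most the maximum length of a row, column or diagonal, namely $n$; or, by the lemma, $|C|\le 5$. Thus $\omega(\mathcal{Q}(n))\le\max\{n,5\}$, and matching this against the explicit cliques above gives $\omega(\mathcal{Q}(n))=5$ for $n\in\{3,4\}$ and $\omega(\mathcal{Q}(n))=n$ for $n\ge 5$, while $\omega(\mathcal{Q}(2))=4$ is seen directly. The only genuinely delicate point is the lemma, and inside it the fact that a line carrying three clique‑vertices constrains an off‑line clique‑vertex in the same way for diagonals as for rows and columns: no symmetry of the board exchanges a row with a diagonal, so that sub‑case has to be recomputed rather than quoted; everything else is routine bookkeeping.
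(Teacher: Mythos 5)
Your proposal is correct, but it takes a genuinely different route from the paper's. The paper settles $n\in\{2,3,4\}$ by inspection (exhibiting the five mutually attacking queens in $\mathcal{T}_4$ and asserting maximality), and for $n\ge 5$ argues by contradiction: an $(n+1)$-clique contains two vertices in a common row, and a count shows at most three clique vertices lie off that row and at most three on it. You instead prove one structural lemma — a clique not contained in a single line has at most five vertices — via the dichotomy on whether some line carries three clique vertices, using the fact that a square off a line attacks at most three squares of that line, and that when there are three they form an arithmetic progression whose middle term forces any off-line clique vertex into one of exactly two squares. What this buys is uniformity and sharpness: the resulting bound $\omega(\mathcal{Q}(n))\le\max\{n,5\}$ disposes of $n=3,4$ (left to inspection in the paper) and of the boundary case $n=5$ (where a crude $3+3=6=n+1$ count alone is not conclusive and the finer progression structure is what closes it) by the same argument; the price, which you correctly flag, is that the diagonal case of the lemma must be computed separately since no board symmetry maps a diagonal to a row — and that computation does go through: the squares of the diagonal $x-y=c$ attacked by an off-diagonal $(x',y')$ correspond to the parameters $x'$, $y'+c$ and their average, so the same two-position conclusion holds. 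One cosmetic remark: the parenthetical justification of your plus-shaped $5$-clique omits the two pairs of opposite midpoints, but these lie in the central row or column of the $3\times 3$ block, so the set is indeed a clique and the lower bounds stand.
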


\begin{proof}
The values of the clique number for $n \in \{2, 3, 4\}$ are immediate. For instance, for $n=4$, considering the $5$
vertices labeled by the symbol of a Queen in the squares of the chessboard depicted in Figure~\ref{tab_chessboard3},
it is obviuous that they form a clique and there is no other clique with more than five vertices. Note that there
are no four (resp. three) vertices in the same row or column having two (resp. three) common neighbors outside the row
or column and there are no two vertices in the same row or column with four common neighbors outside the row or column.
\begin{figure}[h!]
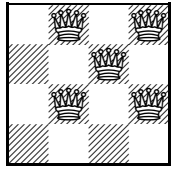

	\centering
	\setchessboard{boardfontsize=15pt,labelfontsize=6pt}
	\chessboard[pgfstyle=4x4,maxfield=d4,setwhite={Qd4, Qc3,Qb2,Qb4,Qd2},label=false,showmover=false]
\caption{Five Queens attacking each other in $\mathcal{T}_4$.}\label{tab_chessboard3}
\end{figure}

Let us assume $n \ge 5$ and that $\mathcal{Q}(n)$ has a clique with $n+1$ vertices. Then, among these vertices (squares of the chessboard) there are two, say $(i,j_1)$ and $(i,j_2)$ (similarly $(i_1,j)$ and $(i_2,j)$) in the same row (resp. column) of the chessboard. The common neighbors to these vertices out of the $i$-th row forming (all together) a maximal
clique are the following.
\begin{enumerate}
\item If $i=1$ (for $i=n$ is similar) and $j_2 = j_1+k$, with $k \ge 1$, those vertices are $(i+k,j_1)$ and $(i+k,j_2)$, plus the vertex
      $(i+k/2,j_1+k/2)$ when $k$ is even.
\item If $1 < i < n$ and $j_2 = j_1+k$, with $k \ge 1$, those vertices are $(i-k,j_1)$ and $(i-k,j_2)$, if $i-k \geq 1$, and the vertex $(i-k/2,j_1+k/2)$ when $k$ is even and $i-k/2 \geq 1$, or $(i+k,j_1)$ and $(i+k,j_2)$, if $i+k \leq n$ plus the vertex $(i+k/2,j_1+ k/2)$ when $k$ is even and $i+k/2 \leq n$. \end{enumerate}

In both cases, there are at most three common neighbors to $(i,j_1)$ and $(i,j_2)$ outside the $i$-th row forming (all together) a maximal clique. Choosing just one vertex, among the common neighbors of $(i,j_1)$ and $(i,j_2)$ outside the $i$-th row, the maximum number of its neighbors belonging to the $i$-th row is three. Therefore, the vertices of any row (or column) form a maximum clique.
\end{proof}

\subsubsection{The chromatic number}
It is well-known that the chromatic number of a graph is not less than its clique number. Therefore,
$\chi(\mathcal{Q}(2)) = 4$ and, since $\omega(\mathcal{Q}(n))=5$ for $n \in \{3, 4, 5\}$,
\begin{equation}
\chi(\mathcal{Q}(n)) = 5,\label{lowerbound}
\end{equation}
as it follows from the vertex colorings presented in Figure~\ref{Q3_Q4_Q5_colorations}, where the set of colors is
$C=\{0,1,2,3,4\}$.

\begin{figure}[h!]
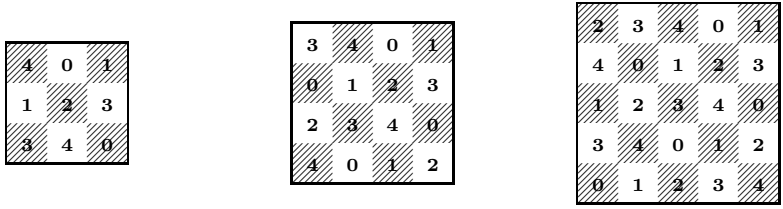

\centering
\begin{subfigure}{0.3\textwidth}
		{	\setchessboard{smallboard}
			\chessboard[
			pgfstyle=
			{[base,at={\pgfpoint{0pt}{-0.4ex}}]text},maxfield=c3,label=false,showmover=false,text= \fontsize{1.2ex}{1.2ex}\bfseries{3},markregions={a1-a1},text= \fontsize{1.2ex}{1.2ex}\bfseries{1},markregions={a2-a2},text= \fontsize{1.2ex}{1.2ex}\bfseries{4},markregions={a3-a3},text= \fontsize{1.2ex}{1.2ex}\bfseries{4},markregions={b1-b1},text= \fontsize{1.2ex}{1.2ex}\bfseries{2},markregions={b2-b2},text= \fontsize{1.2ex}{1.2ex}\bfseries{0},markregions={b3-b3},text= \fontsize{1.2ex}{1.2ex}\bfseries{0},markregions={c1-c1},text= \fontsize{1.2ex}{1.2ex}\bfseries{3},markregions={c2-c2},text= \fontsize{1.2ex}{1.2ex}\bfseries{1},markregions={c3-c3}]}
\end{subfigure}
\begin{subfigure}{0.3\textwidth}
		{	\setchessboard{smallboard}
			\chessboard[
			pgfstyle=
			{[base,at={\pgfpoint{0pt}{-0.4ex}}]text},maxfield=d4,label=false,showmover=false,text= \fontsize{1.2ex}{1.2ex}\bfseries{4},markregions={a1-a1},text= \fontsize{1.2ex}{1.2ex}\bfseries{2},markregions={a2-a2},text= \fontsize{1.2ex}{1.2ex}\bfseries{0},markregions={a3-a3},text= \fontsize{1.2ex}{1.2ex}\bfseries{3},markregions={a4-a4},text= \fontsize{1.2ex}{1.2ex}\bfseries{0},markregions={b1-b1},text= \fontsize{1.2ex}{1.2ex}\bfseries{3},markregions={b2-b2},text= \fontsize{1.2ex}{1.2ex}\bfseries{1},markregions={b3-b3},text= \fontsize{1.2ex}{1.2ex}\bfseries{4},markregions={b4-b4},text= \fontsize{1.2ex}{1.2ex}\bfseries{1},markregions={c1-c1},text= \fontsize{1.2ex}{1.2ex}\bfseries{4},markregions={c2-c2},text= \fontsize{1.2ex}{1.2ex}\bfseries{2},markregions={c3-c3},text= \fontsize{1.2ex}{1.2ex}\bfseries{0},markregions={c4-c4},text= \fontsize{1.2ex}{1.2ex}\bfseries{2},markregions={d1-d1},text= \fontsize{1.2ex}{1.2ex}\bfseries{0},markregions={d2-d2},text= \fontsize{1.2ex}{1.2ex}\bfseries{3},markregions={d3-d3},text= \fontsize{1.2ex}{1.2ex}\bfseries{1},markregions={d4-d4}]}
\end{subfigure}
\begin{subfigure}{0.3\textwidth}
		{	\setchessboard{smallboard}
            \chessboard[
			pgfstyle=
		{[base,at={\pgfpoint{0pt}{-0.4ex}}]text},maxfield=e5,label=false,showmover=false,text= \fontsize{1.2ex}{1.2ex}\bfseries{2},markregions={a5-a5},text= \fontsize{1.2ex}{1.2ex}\bfseries{3},markregions={b5-b5},text= \fontsize{1.2ex}{1.2ex}\bfseries{4},markregions={c5-c5},text= \fontsize{1.2ex}{1.2ex}\bfseries{0},markregions={d5-d5},text= \fontsize{1.2ex}{1.2ex}\bfseries{1},markregions={e5-e5},text= \fontsize{1.2ex}{1.2ex}\bfseries{4},markregions={a4-a4},text= \fontsize{1.2ex}{1.2ex}\bfseries{0},markregions={b4-b4},text= \fontsize{1.2ex}{1.2ex}\bfseries{1},markregions={c4-c4},text= \fontsize{1.2ex}{1.2ex}\bfseries{2},markregions={d4-d4},text= \fontsize{1.2ex}{1.2ex}\bfseries{3},markregions={e4-e4},text= \fontsize{1.2ex}{1.2ex}\bfseries{1},markregions={a3-a3},text= \fontsize{1.2ex}{1.2ex}\bfseries{2},markregions={b3-b3},text= \fontsize{1.2ex}{1.2ex}\bfseries{3},markregions={c3-c3},text= \fontsize{1.2ex}{1.2ex}\bfseries{4},markregions={d3-d3},text= \fontsize{1.2ex}{1.2ex}\bfseries{0},markregions={e3-e3},text= \fontsize{1.2ex}{1.2ex}\bfseries{3},markregions={a2-a2},text= \fontsize{1.2ex}{1.2ex}\bfseries{4},markregions={b2-b2},text= \fontsize{1.2ex}{1.2ex}\bfseries{0},markregions={c2-c2},text= \fontsize{1.2ex}{1.2ex}\bfseries{1},markregions={d2-d2},text= \fontsize{1.2ex}{1.2ex}\bfseries{2},markregions={e2-e2},text= \fontsize{1.2ex}{1.2ex}\bfseries{0},markregions={a1-a1},text= \fontsize{1.2ex}{1.2ex}\bfseries{1},markregions={b1-b1},text= \fontsize{1.2ex}{1.2ex}\bfseries{2},markregions={c1-c1},text= \fontsize{1.2ex}{1.2ex}\bfseries{3},markregions={d1-d1},text= \fontsize{1.2ex}{1.2ex}\bfseries{4},markregions={e1-e1}]}
\end{subfigure}
\caption{A vertex coloring of $\mathcal{Q}(3), \mathcal{Q}(4)$ and $\mathcal{Q}(5)$.}\label{Q3_Q4_Q5_colorations}
\end{figure}

\begin{remark}
Each color class (subset of vertices with the same color) of the vertex coloring of $\mathcal{Q}(5)$ presented in
Figure~\ref{Q3_Q4_Q5_colorations} is a solution for the $5$-Queens' problem. More generally, when $\chi(\mathcal{Q}(n))=n$,
each color class of a vertex coloring of $\mathcal{Q}(n)$ with $n$ colors is a solution for the $n$-Queens' problem.
\end{remark}

The vertex coloring of $\mathcal{Q}(5)$ presented in Figure~\ref{Q3_Q4_Q5_colorations} forms a Latin square with
the particular property that every diagonal has no repeated colors. More general, it is immediate that $\chi(\mathcal{Q}(n))=n$ if and only if there is a Latin square of order $n$ formed by $n$ distinct color symbols, where each color appears once in each of its diagonals. The next theorem gives a sufficient condition for $\chi(\mathcal{Q}(n))=n$. Its proof can be found in \cite{chvatal}. Even so, for the readers convenience, this proof is herein reproduced.

\begin{theorem}
If $n \equiv 1, 5 \mod 6$, then $\chi(\mathcal{Q}(n))=n$.\label{KnutVik_colorations}
\end{theorem}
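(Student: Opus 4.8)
The plan is to exhibit an explicit proper colouring of $\mathcal{Q}(n)$ with exactly $n$ colours; since $\chi(\mathcal{Q}(n)) \ge \omega(\mathcal{Q}(n)) = n$ for $n \ge 5$, while $\chi(\mathcal{Q}(1)) = 1$ trivially, this immediately forces $\chi(\mathcal{Q}(n)) = n$. Writing each square of $\mathcal{T}_n$ by its matrix coordinates $(p,q)$ with $1 \le p,q \le n$, I would take the affine colouring $\varphi(p,q) \equiv 2p + q \pmod n$ into the colour set $C = \{0,1,\dots,n-1\}$; this generalizes to arbitrary $n$ the colouring of $\mathcal{Q}(5)$ displayed in Figure~\ref{Q3_Q4_Q5_colorations} (up to an irrelevant permutation of the colours).

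To check that $\varphi$ is proper I would run through the four ways two squares can be adjacent, in each case tracking the increment of $\varphi$ along one step of the line in question. Along a row ($p$ fixed) $\varphi$ increases by $1$; along an anti-diagonal, a step $(+1,-1)$ changes $\varphi$ by $2(+1)+(-1)=1$; hence along any row or anti-diagonal --- each of length at most $n$ --- the colours are consecutive residues modulo $n$ and therefore pairwise distinct, for every $n$. Along a column ($q$ fixed) a step changes $\varphi$ by $2$, so two squares of the column share a colour only if twice the difference of their row indices is a multiple of $n$; since those indices lie in $\{1,\dots,n\}$, this forces the indices to coincide exactly when $\gcd(2,n)=1$, i.e.\ when $n$ is odd. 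Finally, along a diagonal a step $(+1,+1)$ changes $\varphi$ by $2(+1)+(+1)=3$, and the same argument (the step parameter ranging over an interval of length at most $n$) shows the diagonal carries distinct colours exactly when $\gcd(3,n)=1$, i.e.\ when $3\nmid n$.

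Combining the two arithmetic conditions, $\varphi$ is a proper $n$-colouring precisely when $2\nmid n$ and $3\nmid n$, that is, when $\gcd(n,6)=1$, which is exactly $n\equiv 1,5\pmod 6$. Under this hypothesis $\chi(\mathcal{Q}(n))\le |C| = n$, so together with the clique lower bound $\chi(\mathcal{Q}(n))=n$. I do not foresee a genuine obstacle here: the increments $1,2,3,1$ are forced once one asks $\varphi$ to be injective along the four line directions, so the construction essentially writes itself. The only point that demands a little care is recording that the lines along which a queen attacks are the honest rows, columns and diagonals of $\mathcal{T}_n$ --- the diagonals having lengths $1,2,\dots,n$, with no wrap-around --- which is exactly why bounding the relevant step parameter by $n$ is enough to conclude distinctness of the colours.
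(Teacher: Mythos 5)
Your proposal is correct and essentially coincides with the paper's proof: your colouring $\varphi(p,q)\equiv 2p+q \pmod n$ is, up to a reflection of the board, the same affine colouring $j-2i \bmod n$ (due to Chv\'atal) used in the paper, and your increment-tracking along rows, columns and the two diagonal directions is just a reorganized form of the paper's congruence analysis using that $n$ is odd and not divisible by $3$. The combination with the clique lower bound $\omega(\mathcal{Q}(n))=n$ for $n\ge 5$ (and the trivial case $n=1$) is likewise how the paper concludes.
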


\begin{proof}
Assume that $n$ is nor even neither divisible by $3$ and consider the vertex coloring proposed in \cite{chvatal},
where the $n \times n$ chessboard is seen as the cartesian product $\{0, 1, \dots, n-1\} \times \{0, 1, \dots, n-1\}$
and the set of colors is $C = \{0, 1, \dots, n-1\}$. Assign to each vertex $(i,j)$ the color $j - 2i \mod n$.

If $(i,j)$ and $(p,q)$ are two vertices with the same color, then $j - 2i \equiv q - 2p \mod n$, which is equivalent to
\begin{equation}
2(p-i) \equiv (q-j) \mod n. \label{same_color}
\end{equation}

\begin{enumerate}
\item If the two vertices are in the same row, that is, $i=p$, then \eqref{same_color} implies $j=q$.
\item If the two vertices are in the same column, that is, $j=q$, then \eqref{same_color} implies $i=p$, since $n$ is odd.
\item If the two vertices are in the same diagonal, then (a) $p-i=q-j$ or (b) $p-i=j-q$ and, from \eqref{same_color},
      \begin{itemize}
      \item[(a)] $p-i=q-j \Rightarrow 2(q-j) \equiv (q-j) \mod n \Rightarrow j = q$ and then $(i,j)=(p,q)$.
      \item[(b)] $p-i=j-q \Rightarrow 2(j-q) \equiv (q-j) \mod n \Rightarrow 3(j-q) \equiv 0 \mod n$ and then $(i,j)=(p,q)$,
      since $n$ is not divisible by $3$. \hfill\qedhere
      \end{itemize}
\end{enumerate}
\end{proof}

It is known that $\chi(\mathcal{Q}(n)) > n$ for $n = 2, 3, 4, 6, 8, 9, 10$ and thus we may say that the sufficient condition, $n \equiv 1, 5 \mod 6$,
for $\chi(\mathcal{Q}(n)) = n$, is also necessary when $n<12$. However, in
general, this condition is not necessary. Counterexamples with $n=12$ and $n=14$ are presented in \cite{Vasquez04}. Figure~\ref{tab_VasquezQ12} shows the former counterexample.

\begin{figure}[h!]
\centering
{\setchessboard{smallboard}
 \chessboard[pgfstyle=
			{[base,at={\pgfpoint{0pt}{-0.4ex}}]text},maxfield=l12,label=false,showmover=false,text= \fontsize{1.5ex}{1.2ex}\bfseries{0},
             markregions={a12-a12},text= \fontsize{1.5ex}{1.2ex}\bfseries{5},markregions={b12-b12},text= \fontsize{1.5ex}{1.2ex}\bfseries{9},
             markregions={c12-c12},text= \fontsize{1.5ex}{1.2ex}\bfseries{6},markregions={d12-d12},text= \fontsize{1.5ex}{1.2ex}\bfseries{3},
             markregions={e12-e12},text= \fontsize{1.5ex}{1.2ex}\bfseries{8},markregions={f12-f12},text= \fontsize{1.5ex}{1.2ex}\bfseries{4},
             markregions={g12-g12},text= \fontsize{1.5ex}{1.2ex}\bfseries{1},markregions={h12-h12},text= \fontsize{1.5ex}{1.2ex}\bfseries{10},
             markregions={i12-i12},text= \fontsize{1.5ex}{1.2ex}\bfseries{11},markregions={j12-j12},text= \fontsize{1.5ex}{1.2ex}\bfseries{7},
             markregions={k12-k12},text= \fontsize{1.5ex}{1.2ex}\bfseries{2},markregions={l12-l12},text= \fontsize{1.5ex}{1.2ex}\bfseries{7},
             markregions={a11-a11},text= \fontsize{1.5ex}{1.2ex}\bfseries{11},markregions={b11-b11},text= \fontsize{1.5ex}{1.2ex}\bfseries{4},
             markregions={c11-c11},text= \fontsize{1.5ex}{1.2ex}\bfseries{2},markregions={d11-d11},text= \fontsize{1.5ex}{1.2ex}\bfseries{1},
             markregions={e11-e11},text= \fontsize{1.5ex}{1.2ex}\bfseries{6},markregions={f11-f11},text= \fontsize{1.5ex}{1.2ex}\bfseries{10},
             markregions={g11-g11},text= \fontsize{1.5ex}{1.2ex}\bfseries{3},markregions={h11-h11},text= \fontsize{1.5ex}{1.2ex}\bfseries{0},
             markregions={i11-i11},text= \fontsize{1.5ex}{1.2ex}\bfseries{8},markregions={j11-j11},text= \fontsize{1.5ex}{1.2ex}\bfseries{9},
             markregions={k11-k11},text= \fontsize{1.5ex}{1.2ex}\bfseries{5},markregions={l11-l11},text= \fontsize{1.5ex}{1.2ex}\bfseries{8},
             markregions={a10-a10},text= \fontsize{1.5ex}{1.2ex}\bfseries{1},markregions={b10-b10},text= \fontsize{1.5ex}{1.2ex}\bfseries{10},
             markregions={c10-c10},text= \fontsize{1.5ex}{1.2ex}\bfseries{9},markregions={d10-d10},text= \fontsize{1.5ex}{1.2ex}\bfseries{5},
             markregions={e10-e10},text= \fontsize{1.5ex}{1.2ex}\bfseries{2},markregions={f10-f10},text= \fontsize{1.5ex}{1.2ex}\bfseries{0},
             markregions={g10-g10},text= \fontsize{1.5ex}{1.2ex}\bfseries{7},markregions={h10-h10},text= \fontsize{1.5ex}{1.2ex}\bfseries{11},
             markregions={i10-i10},text= \fontsize{1.5ex}{1.2ex}\bfseries{6},markregions={j10-j10},text= \fontsize{1.5ex}{1.2ex}\bfseries{3},
             markregions={k10-k10},text= \fontsize{1.5ex}{1.2ex}\bfseries{4},markregions={l10-l10},text= \fontsize{1.5ex}{1.2ex}\bfseries{10},
             markregions={a9-a9},text= \fontsize{1.5ex}{1.2ex}\bfseries{0},markregions={b9-b9},text= \fontsize{1.5ex}{1.2ex}\bfseries{3},
             markregions={c9-c9},text= \fontsize{1.5ex}{1.2ex}\bfseries{8},markregions={d9-d9},text= \fontsize{1.5ex}{1.2ex}\bfseries{7},
             markregions={e9-e9},text= \fontsize{1.5ex}{1.2ex}\bfseries{11},markregions={f9-f9},text= \fontsize{1.5ex}{1.2ex}\bfseries{9},
             markregions={g9-g9},text= \fontsize{1.5ex}{1.2ex}\bfseries{5},markregions={h9-h9},text= \fontsize{1.5ex}{1.2ex}\bfseries{4},
             markregions={i9-i9},text= \fontsize{1.5ex}{1.2ex}\bfseries{1},markregions={j9-j9},text= \fontsize{1.5ex}{1.2ex}\bfseries{2},
             markregions={k9-k9},text= \fontsize{1.5ex}{1.2ex}\bfseries{6},markregions={l9-l9},text= \fontsize{1.5ex}{1.2ex}\bfseries{5},
             markregions={a8-a8},text= \fontsize{1.5ex}{1.2ex}\bfseries{6},markregions={b8-b8},text= \fontsize{1.5ex}{1.2ex}\bfseries{11},
             markregions={c8-c8},text= \fontsize{1.5ex}{1.2ex}\bfseries{4},markregions={d8-d8},text= \fontsize{1.5ex}{1.2ex}\bfseries{2},
             markregions={e8-e8},text= \fontsize{1.5ex}{1.2ex}\bfseries{1},markregions={f8-f8},text= \fontsize{1.5ex}{1.2ex}\bfseries{3},
             markregions={g8-g8},text= \fontsize{1.5ex}{1.2ex}\bfseries{0},markregions={h8-h8},text= \fontsize{1.5ex}{1.2ex}\bfseries{8},
             markregions={i8-i8},text= \fontsize{1.5ex}{1.2ex}\bfseries{9},markregions={j8-j8},text= \fontsize{1.5ex}{1.2ex}\bfseries{10},
             markregions={k8-k8},text= \fontsize{1.5ex}{1.2ex}\bfseries{7},markregions={l8-l8},text= \fontsize{1.5ex}{1.2ex}\bfseries{11},
             markregions={a7-a7},text= \fontsize{1.5ex}{1.2ex}\bfseries{7},markregions={b7-b7},text= \fontsize{1.5ex}{1.2ex}\bfseries{0},
             markregions={c7-c7},text= \fontsize{1.5ex}{1.2ex}\bfseries{1},markregions={d7-d7},text= \fontsize{1.5ex}{1.2ex}\bfseries{10},
             markregions={e7-e7},text= \fontsize{1.5ex}{1.2ex}\bfseries{4},markregions={f7-f7},text= \fontsize{1.5ex}{1.2ex}\bfseries{8},
             markregions={g7-g7},text= \fontsize{1.5ex}{1.2ex}\bfseries{6},markregions={h7-h7},text= \fontsize{1.5ex}{1.2ex}\bfseries{3},
             markregions={i7-i7},text= \fontsize{1.5ex}{1.2ex}\bfseries{2},markregions={j7-j7},text= \fontsize{1.5ex}{1.2ex}\bfseries{5},
             markregions={k7-k7},text= \fontsize{1.5ex}{1.2ex}\bfseries{9},markregions={l7-l7},text= \fontsize{1.5ex}{1.2ex}\bfseries{2},
             markregions={a6-a6},text= \fontsize{1.5ex}{1.2ex}\bfseries{8},markregions={b6-b6},text= \fontsize{1.5ex}{1.2ex}\bfseries{6},
             markregions={c6-c6},text= \fontsize{1.5ex}{1.2ex}\bfseries{3},markregions={d6-d6},text= \fontsize{1.5ex}{1.2ex}\bfseries{9},
             markregions={e6-e6},text= \fontsize{1.5ex}{1.2ex}\bfseries{5},markregions={f6-f6},text= \fontsize{1.5ex}{1.2ex}\bfseries{7},
             markregions={g6-g6},text= \fontsize{1.5ex}{1.2ex}\bfseries{11},markregions={h6-h6},text= \fontsize{1.5ex}{1.2ex}\bfseries{1},
             markregions={i6-i6},text= \fontsize{1.5ex}{1.2ex}\bfseries{10},markregions={j6-j6},text= \fontsize{1.5ex}{1.2ex}\bfseries{4},
             markregions={k6-k6},text= \fontsize{1.5ex}{1.2ex}\bfseries{0},markregions={l6-l6},text= \fontsize{1.5ex}{1.2ex}\bfseries{3},
             markregions={a5-a5},text= \fontsize{1.5ex}{1.2ex}\bfseries{4},markregions={b5-b5},text= \fontsize{1.5ex}{1.2ex}\bfseries{5},
             markregions={c5-c5},text= \fontsize{1.5ex}{1.2ex}\bfseries{0},markregions={d5-d5},text= \fontsize{1.5ex}{1.2ex}\bfseries{11},
             markregions={e5-e5},text= \fontsize{1.5ex}{1.2ex}\bfseries{10},markregions={f5-f5},text= \fontsize{1.5ex}{1.2ex}\bfseries{6},
             markregions={g5-g5},text= \fontsize{1.5ex}{1.2ex}\bfseries{9},markregions={h5-h5},text= \fontsize{1.5ex}{1.2ex}\bfseries{2},
             markregions={i5-i5},text= \fontsize{1.5ex}{1.2ex}\bfseries{7},markregions={j5-j5},text= \fontsize{1.5ex}{1.2ex}\bfseries{8},
             markregions={k5-k5},text= \fontsize{1.5ex}{1.2ex}\bfseries{1},markregions={l5-l5},text= \fontsize{1.5ex}{1.2ex}\bfseries{9},
             markregions={a4-a4},text= \fontsize{1.5ex}{1.2ex}\bfseries{2},markregions={b4-b4},text= \fontsize{1.5ex}{1.2ex}\bfseries{1},
             markregions={c4-c4},text= \fontsize{1.5ex}{1.2ex}\bfseries{10},markregions={d4-d4},text= \fontsize{1.5ex}{1.2ex}\bfseries{4},
             markregions={e4-e4},text= \fontsize{1.5ex}{1.2ex}\bfseries{7},markregions={f4-f4},text= \fontsize{1.5ex}{1.2ex}\bfseries{5},
             markregions={g4-g4},text= \fontsize{1.5ex}{1.2ex}\bfseries{8},markregions={h4-h4},text= \fontsize{1.5ex}{1.2ex}\bfseries{6},
             markregions={i4-i4},text= \fontsize{1.5ex}{1.2ex}\bfseries{3},markregions={j4-j4},text= \fontsize{1.5ex}{1.2ex}\bfseries{0},
             markregions={k4-k4},text= \fontsize{1.5ex}{1.2ex}\bfseries{11},markregions={l4-l4},text= \fontsize{1.5ex}{1.2ex}\bfseries{4},
             markregions={a3-a3},text= \fontsize{1.5ex}{1.2ex}\bfseries{10},markregions={b3-b3},text= \fontsize{1.5ex}{1.2ex}\bfseries{7},
             markregions={c3-c3},text= \fontsize{1.5ex}{1.2ex}\bfseries{11},markregions={d3-d3},text= \fontsize{1.5ex}{1.2ex}\bfseries{0},
             markregions={e3-e3},text= \fontsize{1.5ex}{1.2ex}\bfseries{3},markregions={f3-f3},text= \fontsize{1.5ex}{1.2ex}\bfseries{1},
             markregions={g3-g3},text= \fontsize{1.5ex}{1.2ex}\bfseries{2},markregions={h3-h3},text= \fontsize{1.5ex}{1.2ex}\bfseries{9},
             markregions={i3-i3},text= \fontsize{1.5ex}{1.2ex}\bfseries{5},markregions={j3-j3},text= \fontsize{1.5ex}{1.2ex}\bfseries{6},
             markregions={k3-k3},text= \fontsize{1.5ex}{1.2ex}\bfseries{8},markregions={l3-l3},text= \fontsize{1.5ex}{1.2ex}\bfseries{6},
             markregions={a2-a2},text= \fontsize{1.5ex}{1.2ex}\bfseries{3},markregions={b2-b2},text= \fontsize{1.5ex}{1.2ex}\bfseries{2},
             markregions={c2-c2},text= \fontsize{1.5ex}{1.2ex}\bfseries{5},markregions={d2-d2},text= \fontsize{1.5ex}{1.2ex}\bfseries{8},
             markregions={e2-e2},text= \fontsize{1.5ex}{1.2ex}\bfseries{9},markregions={f2-f2},text= \fontsize{1.5ex}{1.2ex}\bfseries{11},
             markregions={g2-g2},text= \fontsize{1.5ex}{1.2ex}\bfseries{4},markregions={h2-h2},text= \fontsize{1.5ex}{1.2ex}\bfseries{7},
             markregions={i2-i2},text= \fontsize{1.5ex}{1.2ex}\bfseries{0},markregions={j2-j2},text= \fontsize{1.5ex}{1.2ex}\bfseries{1},
             markregions={k2-k2},text= \fontsize{1.5ex}{1.2ex}\bfseries{10},markregions={l2-l2},text= \fontsize{1.5ex}{1.2ex}\bfseries{1},
             markregions={a1-a1},text= \fontsize{1.5ex}{1.2ex}\bfseries{9},markregions={b1-b1},text= \fontsize{1.5ex}{1.2ex}\bfseries{8},
             markregions={c1-c1},text= \fontsize{1.5ex}{1.2ex}\bfseries{7},markregions={d1-d1},text= \fontsize{1.5ex}{1.2ex}\bfseries{6},
             markregions={e1-e1},text= \fontsize{1.5ex}{1.2ex}\bfseries{0},markregions={f1-f1},text= \fontsize{1.5ex}{1.2ex}\bfseries{2},
             markregions={g1-g1},text= \fontsize{1.5ex}{1.2ex}\bfseries{10},markregions={h1-h1},text= \fontsize{1.5ex}{1.2ex}\bfseries{5},
             markregions={i1-i1},text= \fontsize{1.5ex}{1.2ex}\bfseries{4},markregions={j1-j1},text= \fontsize{1.5ex}{1.2ex}\bfseries{11},
             markregions={k1-k1},text= \fontsize{1.5ex}{1.2ex}\bfseries{3},markregions={l1-l1}]}
		\caption{The vertex coloring of $\mathcal{Q}(12)$ with $12$ colors presented in \cite{Vasquez04}.}\label{tab_VasquezQ12}
	\end{figure}

Additional counterexamples for the sufficient condition $n \equiv 1, 5 \mod 6$ being also a necessary condition for
$\chi(\mathcal{Q}(n)) = n$ are referred in \cite{chvatal}, for $n= 15, 16, 18, 20, 21, 22, 24, 28, 32$.

\begin{remark}
In design theory, a Knut Vik design $K$ of order $n$ can be defined (see \cite{HedayatFederer75}) as an $n \times n$
array of elements, chosen from a set of $n$ elements such that (i) with respect to the rows and columns the design is
a Latin square of order $n$ and (ii) for $0 \le j \le n-1$, defining the $j$-th right (resp. left) diagonal of $K$ as
the set of $n$ cells $\{(i,j+i) \mid i=0, 1, \dots, n-1\}$ (resp. $\{(i,j-i-1) \mid i=0, 1, \dots, n-1\}$), where the arithmetic
operations are done modulo $n$, each of these diagonals contains the $n$ elements. Such diagonals are usually called
complete diagonals. In other words, we may say that a Latin square $K$ is a Knut Vik design if every right and left diagonal
of $K$ is complete. The above sets of cells defining right and left diagonals are all broken diagonals, except for $j=0$.\\
In \cite{Hedayat77} the author proves that a Knut Vik design of order $n$ exists if and only if $n$ is not divisible by
$2$ or $3$. If $\mathcal{Q}(n))$ has a coloring defining a Knut Vik design it is immediate that $\chi(\mathcal{Q}(n))=n$.
However, there are values of $n$ for which $\chi(\mathcal{Q}(n))=n$ but in the Latin squares defined by the vertex colorings
not all diagonals are complete and then they are not Knut Vik designs. For instance, in the coloring of $\mathcal{Q}(12)$
depicted in Figure~\ref{tab_VasquezQ12} the color symbol $7$ appears twice in the $5$-th right diagonal (a broken
diagonal) and thus at least one color symbol is missing in the set of n cells of this diagonal.
\end{remark}

\begin{theorem}\label{lower_and_upper_bound_2}
For $n \ge 5$,
\[
n \le \chi(\mathcal{Q}(n)) \le \left\{\begin{array}{ll}
                                    n,   & \hbox{if $n \equiv 1, 5 \mod 6$}; \\
                                    n+3  & \hbox{otherwise.}
                                    \end{array}\right.
\]
\end{theorem}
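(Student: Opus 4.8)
The plan is to separate the lower bound from the two cases of the upper bound. The lower bound $n \le \chi(\mathcal{Q}(n))$ is immediate: $\chi(G) \ge \omega(G)$ holds for every graph $G$, and we have already established $\omega(\mathcal{Q}(n)) = n$ for $n \ge 5$. For the upper bound, the case $n \equiv 1, 5 \mod 6$ is exactly Theorem~\ref{KnutVik_colorations}, which even gives $\chi(\mathcal{Q}(n)) = n$, so that $n \le \chi(\mathcal{Q}(n)) \le n$; nothing further is needed there.

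The substance is the remaining case, where $n \ge 5$ satisfies $n \not\equiv 1, 5 \mod 6$ (equivalently, $n$ is even or divisible by $3$). The key observation I would record first is that whenever $n \le m$, the graph $\mathcal{Q}(n)$ is an induced subgraph of $\mathcal{Q}(m)$: identify $\mathcal{T}_n$ with the top-left $n \times n$ corner of $\mathcal{T}_m$; two squares of this corner lie in a common row, column or diagonal of $\mathcal{T}_n$ if and only if they do so in $\mathcal{T}_m$, so the adjacency relation of $\mathcal{Q}(m)$ restricted to these $n^2$ vertices coincides with that of $\mathcal{Q}(n)$. It follows that $\chi(\mathcal{Q}(n)) \le \chi(\mathcal{Q}(m))$ for every $m \ge n$.

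Next I would choose $m$ to be the smallest integer with $m \ge n$ and $m \equiv 1, 5 \mod 6$, and show $m - n \le 3$ by a short case analysis on $n \bmod 6$ (only the values $0,2,3,4$ occurring here): if $n \equiv 0$ or $n \equiv 4$ then $m = n+1$; if $n \equiv 3$ then $m = n+2$; and if $n \equiv 2$ then $m = n+3$. Since $m \ge n \ge 5$, Theorem~\ref{KnutVik_colorations} applies to $m$ and yields $\chi(\mathcal{Q}(m)) = m$. Combining this with the previous paragraph gives $\chi(\mathcal{Q}(n)) \le \chi(\mathcal{Q}(m)) = m \le n+3$, which completes the proof.

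There is no genuine obstacle in this argument; it is essentially bookkeeping. The one point that deserves explicit care is the induced-subgraph claim — in particular, checking that embedding $\mathcal{T}_n$ as a corner of $\mathcal{T}_m$ creates no new diagonal adjacencies among the corner squares — but this is transparent from the definition of $\mathcal{Q}$. One could alternatively phrase the whole upper bound uniformly by noting that for every $n \ge 5$ the next admissible size $m$ satisfies $m \le n+3$, with equality of the colorings when $n$ itself is admissible.
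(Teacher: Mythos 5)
Your proof is correct and follows essentially the same route as the paper: the lower bound from $\chi \ge \omega$ together with $\omega(\mathcal{Q}(n))=n$, and the upper bound by embedding $\mathcal{Q}(n)$ as a subgraph of $\mathcal{Q}(m)$ for the smallest $m \ge n$ with $m \equiv 1,5 \bmod 6$ (so $m \le n+3$) and applying Theorem~\ref{KnutVik_colorations}. You merely make explicit the subgraph embedding and the case analysis on $n \bmod 6$ that the paper's terse proof leaves implicit.
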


\begin{proof}
Since $\omega(\mathcal{Q}(n)) = n$,  when $n \ge 5$, the lower bound for $\chi(\mathcal{Q}(n))$ follows.
The upper bound follows from Theorem~\ref{KnutVik_colorations} and taking into account that the chromatic
number of a subgraph of a graph $G$ is not greater than $\chi(G)$.
\end{proof}

As a direct consequence of Theorem~\ref{lower_and_upper_bound_2}, we have the following corollary.

\begin{corollary}
For $n \ge 5$, $\chi(\mathcal{Q}(n)) =n$ when $n \equiv 1,5 \mod 6$ and
\[
\chi(\mathcal{Q}(n)) \in \left\{\begin{array}{ll}
                                     \{n, n+1\},           & \hbox{if } n \equiv 0, 4 \mod 6; \\
                                     \{n, n+1, n+2, n+3\}, & \hbox{if } n \equiv 2 \mod 6; \\
                                     \{n, n+1, n+2\}       & \hbox{if } n \equiv 3 \mod 6.
                                     \end{array}\right.
\]
\end{corollary}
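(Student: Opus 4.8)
The plan is to read the corollary off Theorem~\ref{lower_and_upper_bound_2} together with one extra structural observation: for $m \ge n$, the graph $\mathcal{Q}(n)$ is a subgraph of $\mathcal{Q}(m)$ (place $\mathcal{T}_n$ as, say, the top-left $n\times n$ corner of $\mathcal{T}_m$; two squares with coordinates in $[n]^2$ lie in a common row, column or diagonal of $\mathcal{T}_n$ precisely when they do in $\mathcal{T}_m$), hence $\chi(\mathcal{Q}(n)) \le \chi(\mathcal{Q}(m))$. This is exactly the monotonicity already invoked in the proof of Theorem~\ref{lower_and_upper_bound_2}; the only refinement needed for the corollary is to pick the \emph{smallest} admissible $m \ge n$ rather than just some $m \le n+3$.

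Concretely, I would argue as follows. When $n \equiv 1, 5 \mod 6$, Theorem~\ref{lower_and_upper_bound_2} gives $n \le \chi(\mathcal{Q}(n)) \le n$, so $\chi(\mathcal{Q}(n)) = n$. For the other residue classes, let $k \ge 1$ be the least integer with $n+k \equiv 1, 5 \mod 6$; since the gaps between consecutive integers congruent to $1$ or $5$ modulo $6$ are $2$ and $4$, one checks $k=1$ when $n \equiv 0, 4 \mod 6$, $k=2$ when $n \equiv 3 \mod 6$, and $k=3$ when $n \equiv 2 \mod 6$. By Theorem~\ref{KnutVik_colorations}, $\chi(\mathcal{Q}(n+k)) = n+k$, so the subgraph relation yields $\chi(\mathcal{Q}(n)) \le n+k$. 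Combining this with the lower bound $\chi(\mathcal{Q}(n)) \ge \omega(\mathcal{Q}(n)) = n$, valid for $n \ge 5$, confines $\chi(\mathcal{Q}(n))$ to $\{n, n+1, \dots, n+k\}$, which is precisely the set claimed in each case ($\{n,n+1\}$ for $n \equiv 0,4$; $\{n,\dots,n+2\}$ for $n \equiv 3$; $\{n,\dots,n+3\}$ for $n \equiv 2$).

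There is no genuine obstacle; this is bookkeeping. The only points to handle with care are: (i) confirming the claimed value of $k$ for each residue class, so that the resulting intervals are exactly as tight as stated and no tighter; and (ii) noting that the hypothesis $n \ge 5$ is what makes $\omega(\mathcal{Q}(n)) = n$ (hence the lower bound $\chi(\mathcal{Q}(n)) \ge n$) available. Everything else is an immediate consequence of Theorem~\ref{lower_and_upper_bound_2}, Theorem~\ref{KnutVik_colorations}, and the monotonicity of $\chi$ under subgraphs.
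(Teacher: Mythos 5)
Your argument is correct and is essentially the paper's own (implicit) reasoning: the paper states the corollary as a direct consequence of Theorem~\ref{lower_and_upper_bound_2}, whose upper bound is obtained exactly by embedding $\mathcal{Q}(n)$ into the next board $\mathcal{Q}(n+k)$ with $n+k \equiv 1,5 \pmod 6$ and invoking Theorem~\ref{KnutVik_colorations}, refined residue class by residue class just as you do, together with the lower bound $\chi(\mathcal{Q}(n)) \ge \omega(\mathcal{Q}(n)) = n$ for $n \ge 5$. Your values $k=1$ for $n \equiv 0,4$, $k=2$ for $n \equiv 3$, and $k=3$ for $n \equiv 2 \pmod 6$ are the correct gaps, so the proposal matches the intended proof.
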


\subsubsection{The domination number}
The study of domination in graphs has its historical roots connected with the classical problems of covering chessboards with
the minimum number of chess pieces, whose mathematical approach goes back to 1862 \cite{Jaenisch1862}. In the 1970s, the
research on the chessboard domination problems was redirected to more general problems of domination in graphs. Since then,
this type of problem has attracted an increasing number of researchers, turning this topic into an intense area of research.\\

The domination number of the Queens' graph, $\gamma\left(\mathcal{Q}(n)\right)$, is the most studied problem
related with this graph. For the first values of $n$, the domination number of $\mathcal{Q}(n)$ is well-known since a long time ago. Indeed, the values of $\gamma\left(\mathcal{Q}(n)\right)$, for $n \in \{ 1, \dots, 8 \}$, presented in
Table~\ref{domination_numbers_for_n le_8}, were given by Rouse Ball in $1892$ \cite{Ball1892}.

\begin{table}[h!]
\centering
\begin{tabular}{c|ccccccccccccc}
             $n$ & 1 & 2 & 3 & 4 & 5 & 6 & 7 & 8 \\\hline
             $\gamma\left(\mathcal{Q}(n)\right)$ & 1 & 1 & 1 & 2 & 3 & 3 & 4 & 5
\end{tabular}
\caption{Values of $\gamma\left(\mathcal{Q}(n)\right)$, for $n \in \{1, \dots, 8\}$.}\label{domination_numbers_for_n le_8}
\end{table}

The following inequalities establish two well-known (lower and upper) bounds for the domination number
\begin{equation}
\frac{n-1}{2} \le \gamma(\mathcal{Q}(n)) \le 2p + q, \label{lower_and_upper_bound}
\end{equation}
where the lower bound holds for every natural number $n$ and the upper bound holds when $n=3p+q$, with
$p \in \mathbb{N}$ and $q \in \{ 0, 1, 2 \}$. The lower bound is due to P.H. Spencer and the upper bound is
due to L. Welch, both cited in \cite{Cockayne1990} as private communications. However, as it is referred in
\cite[Comment added in August 25, 2003]{OstergardWeakley2001}, a proof of the lower bound was published
earlier in \cite{RaghavanVenkatesan1987}. The proof of the upper bound appears in \cite{Cockayne1990}.\\

For particular values of $n$, Weakley \cite{Weakley95} improved the lower bound of \eqref{lower_and_upper_bound} as follows.

\begin{theorem}
For all $k \in \mathbb{N} \cup \{0\}$, $2k+1 \le \gamma(Q(4k+1))$. \label{ThWeakley95}
\end{theorem}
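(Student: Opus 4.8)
The plan is to bootstrap from the classical bound $\gamma(\mathcal{Q}(n)) \ge \frac{n-1}{2}$ recorded in \eqref{lower_and_upper_bound}. For $n = 4k+1$ this already yields $\gamma(\mathcal{Q}(4k+1)) \ge 2k$, so the entire task reduces to ruling out a dominating set of size exactly $2k$. I would therefore assume, aiming at a contradiction, that $D$ is a dominating set of $\mathcal{Q}(4k+1)$ with $|D| = 2k$, thought of as a placement of $2k$ queens on $\mathcal{T}_{4k+1}$, and show that no such $D$ can exist.

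The first and main step is to reopen the proof of the bound in \eqref{lower_and_upper_bound} and read off precisely what equality costs. Let $\bar R$ and $\bar C$ be the sets of rows and of columns of $\mathcal{T}_{4k+1}$ containing no queen of $D$. A square lying simultaneously in a row of $\bar R$ and a column of $\bar C$ is dominated by no queen along a row or a column, hence is dominated by a queen sharing one of its two diagonals with it; the inequality behind \eqref{lower_and_upper_bound} compares the block $\bar R \times \bar C$ against the at most $2|D|$ distinct diagonals available (at most one of each direction per queen). When $|D| = 2k = \frac{n-1}{2}$, every estimate in that argument must hold with equality, and from this I expect to extract: (i) the $2k$ queens lie in $2k$ distinct rows and $2k$ distinct columns, so $|\bar R| = |\bar C| = 2k+1$; (ii) the $2k$ diagonals of one direction through $D$ are pairwise distinct, and likewise for the other direction; and (iii) these $4k$ diagonals form a \emph{minimum} cover of the $(2k+1)\times(2k+1)$ array $\bar R \times \bar C$ by diagonals, leaving no freedom in how the squares of $\bar R \times \bar C$ are dominated.

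From (i)--(iii) I would pass to a rigid description of $D$: a minimum diagonal cover of a $(2k+1)\times(2k+1)$ array is tightly constrained (up to the symmetries of the board), and, together with the fact that each queen lies on exactly one covering diagonal of each direction and occupies one of the $2k$ non-empty rows and one of the $2k$ non-empty columns, this should pin down the queen positions and the way $\bar R$, $\bar C$ interleave with the occupied lines. With such a configuration in hand I would then seek an invariant sensitive to $n \bmod 4$. A natural candidate is the signed count $\sum_{i \in \bar R}\sum_{j \in \bar C}(-1)^{i+j} = \bigl(\sum_{i \in \bar R}(-1)^{i}\bigr)\bigl(\sum_{j \in \bar C}(-1)^{j}\bigr)$, which is a product of two odd integers since $|\bar R| = |\bar C| = 2k+1$, hence \emph{odd}; on the other hand $(-1)^{i+j}$ is constant along every diagonal of either direction, so the rigid covering pattern — whose shape is governed by $n-1 = 4k \equiv 0 \pmod 4$ — should re-evaluate this sum as an \emph{even} number, the desired contradiction. (When $n \equiv 3 \pmod 4$ the quantity $\frac{n-1}{2}$ is odd and this obstruction disappears, consistently with there being no such improvement in general, e.g. $\gamma(\mathcal{Q}(3)) = 1 = \frac{3-1}{2}$.)

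The hard part is unambiguously the structural step: the crude ``rows $+$ columns $+$ diagonals'' count on its own only gives $\gamma(\mathcal{Q}(n)) \ge n/3$, so one must genuinely use the sharper reasoning behind \eqref{lower_and_upper_bound}, and then control the boundary effects — the short diagonals near the corners of $\mathcal{T}_{4k+1}$, and the fact that a minimum diagonal cover of $\bar R \times \bar C$ need only be unique up to moves that do not disturb the parity invariant. Once the extremal configuration is understood, the arithmetic obstruction itself should be short. The base cases are immediate: $k=0$ and $k=1$ follow from $\gamma(\mathcal{Q}(1)) = 1$ and $\gamma(\mathcal{Q}(5)) = 3$ in Table~\ref{domination_numbers_for_n le_8}.
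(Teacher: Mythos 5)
First, note that the paper itself offers no proof of Theorem~\ref{ThWeakley95}: it is quoted from Weakley \cite{Weakley95}, so there is no in-paper argument to measure you against, and your sketch has to stand on its own. As it stands, it does not: both of its pivotal steps are asserted rather than proved. The structural step is the more serious gap. You claim that a hypothetical dominating set $D$ with $|D|=2k$ would, by ``equality in the reasoning behind \eqref{lower_and_upper_bound}'', be forced to have queens in $2k$ distinct rows and columns, pairwise distinct occupied diagonals, and a \emph{minimum, essentially unique} diagonal cover of the grid $\bar R\times\bar C$. But the grid-versus-diagonals comparison you describe cannot deliver this: at most $4k$ occupied diagonals, each meeting the grid in at most $2k+1$ cells, have total capacity $4k(2k+1)$, roughly twice the $(2k+1)^2$ cells to be covered, so no tightness or rigidity follows; indeed, as you yourself observe, this style of counting only gives $\gamma(\mathcal{Q}(n))\ge n/3$. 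The actual proof of the $\frac{n-1}{2}$ bound (Spencer/Raghavan--Venkatesan, only cited in the paper) is a finer argument whose equality case you never actually analyze, and, moreover, the trace of a board diagonal on $\bar R\times\bar C$ is not a diagonal of a compact $(2k+1)\times(2k+1)$ array, since the empty rows and columns need not be consecutive; so ``minimum diagonal cover of the array'' is not even the right covering problem. The extremal configurations are also genuinely non-unique in general: for $n\equiv 3 \pmod 4$ the bound $\frac{n-1}{2}$ is attained (e.g.\ $\gamma(\mathcal{Q}(3))=1$, $\gamma(\mathcal{Q}(11))=5$), so any structure you extract must be exploited in a way that really distinguishes $n\equiv 1$ from $n\equiv 3 \pmod 4$; that is precisely the content of Weakley's argument, and it is missing here.

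The parity step is also incomplete. The ``odd'' half is fine: with $|\bar R|=|\bar C|=2k+1$, the product $\bigl(\sum_{i\in\bar R}(-1)^i\bigr)\bigl(\sum_{j\in\bar C}(-1)^j\bigr)$ is indeed odd. But the claim that the covering pattern forces $\sum_{i\in\bar R}\sum_{j\in\bar C}(-1)^{i+j}$ to be even is unsupported: grid cells may lie on several occupied diagonals, so the sum does not decompose as a sum over diagonals, and even a single diagonal contributes $\pm t$, where $t$ is its number of grid cells, which need not be even. Without the rigid structure from the first step there is no visible route to evenness, so the contradiction never materializes. In short, the proposal is a plausible plan in the general spirit of an equality analysis of \eqref{lower_and_upper_bound}, but both the rigidity claim and the evenness claim are genuine gaps, and the theorem remains unproved by it; if you want a complete argument you should work from Weakley's paper rather than from the bound \eqref{lower_and_upper_bound} as a black box.
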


According to Tables~\ref{domination_numbers_for_n le_8} and \ref{domination_numbers}, it is known that
$\gamma(Q(4k+1))=2k+1$, for $k \in \{1, \dots, 32 \}$. However, the problem of knowing for each nonnegative integer $k$ whether the lower bound in Theorem~\ref{ThWeakley95} is sharp or not remains open.

\begin{conjecture}\cite{Weakley95}\label{conjetura_1}
For all $k \in \mathbb{N} \cup \{0\}$, $2k+1 = \gamma(Q(4k+1))$.
\end{conjecture}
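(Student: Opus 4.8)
The plan is to combine Weakley's lower bound with a matching upper-bound construction. Theorem~\ref{ThWeakley95} already gives $\gamma(\mathcal{Q}(4k+1))\ge 2k+1$, so the whole content of Conjecture~\ref{conjetura_1} is the reverse inequality $\gamma(\mathcal{Q}(4k+1))\le 2k+1$: for every $k$ one must exhibit a set $S$ of $2k+1$ squares of $\mathcal{T}_{4k+1}$ that attacks every square.

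The first step is to recast domination line-by-line. Index the rows and columns of $\mathcal{T}_{4k+1}$ by $\{0,1,\dots,4k\}$, label a descending diagonal by the constant value of $i-j$ on it and an ascending diagonal by the constant value of $i+j$, and let $R,C,D,A$ be the sets of row-, column-, $(i-j)$- and $(i+j)$-indices met by $S$. A square $(i,j)$ fails to be dominated by $S$ exactly when $i\notin R$, $j\notin C$, $i-j\notin D$ and $i+j\notin A$, so $S$ dominates $\mathcal{Q}(4k+1)$ if and only if
\[
\forall\,(i,j)\in I\times J:\qquad i-j\in D \ \text{ or }\ i+j\in A,
\]
where $I=\{0,\dots,4k\}\setminus R$ and $J=\{0,\dots,4k\}\setminus C$. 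It is natural to take the $2k+1$ queens in pairwise distinct rows and columns, so that $|I|=|J|=2k$ while $|D|,|A|\le 2k+1$; the problem then becomes a tight covering of a $2k\times2k$ array of ``free'' cells by the $\le 2k+1$ available descending diagonals and $\le 2k+1$ available ascending diagonals.

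The second step is to produce an explicit family of such $S$. A natural seed is the placement of the $2k+1$ queens on every other square of the main diagonal, at $(2t,2t)$ for $t=0,\dots,2k$; this gives $I=J=\{1,3,\dots,4k-1\}$ and $A=\{0,4,8,\dots,8k\}$, which dominates $\mathcal{Q}(5)$ but already fails for $\mathcal{Q}(9)$ (the square $(1,5)$ is unattacked). So the construction must displace a bounded number of queens away from this central band, principally near the two corners, where the descending diagonals through the free array are too short to contribute, the displaced queens supplying precisely the extra $(i-j)$- and $(i+j)$-indices needed to cover the free cells the central band misses; the displacement pattern should be allowed to depend on the residue of $k$ modulo a fixed small modulus $m$. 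Correctness would then be verified by partitioning $I\times J$ into $O(1)$ rectangular blocks and checking, inside each, a finite system of linear congruences and inequalities in $k$ --- a finite computation once $m$ is chosen.

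The \textbf{main obstacle} is that no such construction is known; this is precisely the reason Conjecture~\ref{conjetura_1} is open. The space is razor-thin: $|D|+|A|\le 4k+2$, whereas the $2k\times2k$ free array has about $4k-1$ descending and $4k-1$ ascending diagonals running through it, and symmetric placements run into parity and counting obstructions (for instance, confining all queens to one colour forces every odd-parity free cell to lie off all diagonals occupied by $S$, which leaves essentially no freedom), so any working construction must break the symmetry and repair the corners, and these repairs interact nontrivially with the central band. The exact values $\gamma(\mathcal{Q}(4k+1))=2k+1$ established so far (for $k\le 32$) come from sporadic constructions or exhaustive search rather than a scheme uniform in $k$; producing such a scheme together with a correctness proof uniform in $k$ is the crux. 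A complementary line worth pursuing is an induction that builds a $(2k+1)$-queen dominating set of $\mathcal{Q}(4k+1)$ from a $(2k-1)$-queen dominating set of $\mathcal{Q}(4(k-1)+1)$ by adjoining a width-$2$ frame on each side and only two new queens; the delicate point there is controlling the diagonal attacks that cross between frame and interior.
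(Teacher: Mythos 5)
There is no proof to compare against: the statement you were asked to prove is presented in the paper as an open conjecture (attributed to Weakley \cite{Weakley95}), and the paper itself explicitly notes that whether the lower bound of Theorem~\ref{ThWeakley95} is sharp for every $k$ ``remains open''; the paper only records that equality is known for $k \le 32$ from computed values. Your write-up correctly identifies the structure of the problem --- the lower bound $2k+1 \le \gamma(\mathcal{Q}(4k+1))$ is Theorem~\ref{ThWeakley95}, so all that is missing is a dominating set of size $2k+1$ in $\mathcal{T}_{4k+1}$ for every $k$ --- and your reduction of domination to covering the $2k \times 2k$ array of free cells $I \times J$ by the available diagonal indices $D$ and $A$ is sound, as is your observation that the every-other-diagonal seed $(2t,2t)$ works for $n=5$ but leaves $(1,5)$ unattacked for $n=9$.

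The genuine gap is the one you name yourself: no uniform-in-$k$ construction is supplied, only a description of what such a construction would have to look like (a perturbed central band with corner repairs depending on $k$ modulo some fixed $m$, verified block-by-block). Without exhibiting the placement and carrying out that verification, nothing beyond the already-known lower bound has been established, so the proposal does not prove the statement --- nor could the paper's text have guided you to one, since none exists there. In short, this is a research plan for attacking an open problem, not a proof; the honest flagging of the obstacle is appropriate, but the conjecture remains exactly as open after your argument as before it.
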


Another open problem, herein posed as a conjecture, was proposed in \cite[Problem 1]{Cockayne1990}.

\begin{conjecture}\label{conjetura_2}
For all $n \in \mathbb{N}$, $\gamma(\mathcal{Q}(n)) \le \gamma(\mathcal{Q}(n+1))$.
\end{conjecture}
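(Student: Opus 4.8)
Since this is an open problem, the following is a plan of attack rather than a proof. The guiding idea is to relate a minimum dominating set of $\mathcal{Q}(n+1)$ to one of $\mathcal{Q}(n)$ through the observation that $\mathcal{Q}(n)$ is isomorphic to the subgraph of $\mathcal{Q}(n+1)$ induced by the squares of any of the four $n\times n$ \emph{corner} sub-boards of $\mathcal{T}_{n+1}$ (for instance rows $1,\dots,n$ and columns $1,\dots,n$), because the rows, columns and diagonals of such a sub-board are precisely the traces on it of the rows, columns and diagonals of $\mathcal{T}_{n+1}$. Consequently, if some minimum dominating set $D$ of $\mathcal{Q}(n+1)$ is contained in such a corner $C$, then $D$, regarded as a subset of $V(\mathcal{Q}(n))$, still dominates every square of $C$, whence $\gamma(\mathcal{Q}(n))\le|D|=\gamma(\mathcal{Q}(n+1))$. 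The conjecture therefore follows from the statement: \emph{for every $m\ge 2$, some minimum dominating set of $\mathcal{Q}(m)$ lies inside one of the four $(m-1)\times(m-1)$ corner sub-boards of $\mathcal{T}_m$}, equivalently, leaves one of the two extreme rows empty and one of the two extreme columns empty. This is my first target.

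The target is at least plausible: since $\gamma(\mathcal{Q}(m))\le\tfrac{2m}{3}+1$ by the Welch bound in \eqref{lower_and_upper_bound}, which is well below $m$, every minimum dominating set leaves of order $m/3$ rows and $m/3$ columns queen-free, and one needs only an extreme row and an extreme column to be among them; for example, for $m=4$ the set $\{(2,2),(2,3)\}$ is a minimum dominating set lying inside the top-left $3\times 3$ corner. I would first try to prove the target directly, by a counting or averaging argument over the queen-free lines, or by symmetrising with the dihedral symmetry group of the board, keeping in mind that this group permutes only the extreme lines and not the interior ones, which is exactly what makes the target delicate.

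Should the target fail for some $m$, that is, should every minimum dominating set of $\mathcal{Q}(m)$ meet all four border lines, a genuine repair is needed. Fix a corner $C$ (say rows $1,\dots,m-1$ and columns $1,\dots,m-1$), chosen so that the complementary L-shaped border $L$ contains as few queens as possible, and split a minimum dominating set as $D=D_{\mathrm{in}}\cup D_{\mathrm{out}}$ with $D_{\mathrm{in}}=D\cap C$ and $D_{\mathrm{out}}=D\cap L$; the aim is to produce $D_{\mathrm{out}}'\subseteq C$ with $|D_{\mathrm{out}}'|\le|D_{\mathrm{out}}|$ such that $D_{\mathrm{in}}\cup D_{\mathrm{out}}'$ dominates $C$. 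A border queen cannot, in general, be swapped losslessly for one queen of $C$: a queen at $(m,j)$ dominates inside $C$ all of column $j$ together with the two diagonal rays through $(m-1,j-1)$ and $(m-1,j+1)$, and since a single queen covers the whole of column $j$ only if it lies in column $j$, the replacement must sit in column $j$, whose diagonals are then shifted by one step. The natural move is therefore to push each border queen inward by one square, thereby also covering the entire adjacent boundary line of $C$, and then to prove that the squares lost to the shift, namely two rays $\{(m-1-k,\,j-1-k):k\ge1\}$ and $\{(m-1-k,\,j+1+k):k\ge1\}$ per pushed queen, are all recaptured, by $D_{\mathrm{in}}$, by another pushed queen, or by the freshly covered boundary line; I would try to organise this bookkeeping as an augmenting-path or flow argument on the incidence between queens and still-uncovered squares.

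The step I expect to be the genuine obstacle, and presumably the reason the conjecture from \cite{Cockayne1990} remains open, is controlling this process globally. A single border queen can be the unique dominator of several corner squares lying on distinct diagonals, so shifting it trades coverage of one diagonal for coverage of another, and a purely local exchange may merely export an uncovered square elsewhere, setting off a chain of failures that has to be excluded. Equivalently, one must show that a minimum dominating set of $\mathcal{Q}(m)$ can always be chosen compatible with some corner, and this appears to require either a clever monotone invariant on minimum dominating sets or a far more detailed description of the extremal configurations than is presently available, these being understood mostly through computation, as the tabulated values of $\gamma(\mathcal{Q}(n))$ reflect. The outline above should thus be read as a program, not a finished argument.
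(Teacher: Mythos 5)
You are right to flag at the outset that this statement is not proved in the paper at all: it is stated as Conjecture~\ref{conjetura_2}, attributed to \cite{Cockayne1990}, and remains open; the only adjacent result the paper actually proves is Proposition~\ref{dominating_number_inequality}, namely $\gamma(\mathcal{Q}(n+1)) \le \gamma(\mathcal{Q}(n))+1$, obtained by the easy embedding of $\mathcal{T}_n$ into a corner of $\mathcal{T}_{n+1}$ plus one extra queen at $(n+1,n+1)$. So there is no paper proof to match your argument against, and your submission, by your own admission, is a program rather than a proof.

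Within that program, the reduction step is sound: a corner $n\times n$ sub-board of $\mathcal{T}_{n+1}$ does induce a copy of $\mathcal{Q}(n)$ (rows, columns and diagonals of the sub-board are exactly the traces of those of $\mathcal{T}_{n+1}$), so any minimum dominating set of $\mathcal{Q}(n+1)$ contained in a corner dominates that corner internally and yields $\gamma(\mathcal{Q}(n))\le\gamma(\mathcal{Q}(n+1))$. The genuine gap is that everything then rests on the unproven target lemma that every $\mathcal{Q}(m)$ admits a minimum dominating set avoiding one extreme row and one extreme column simultaneously, and none of the tools you sketch closes it. The counting observation from the Welch bound in \eqref{lower_and_upper_bound} only gives many queen-free rows and columns, with no control over whether any of them are extreme; symmetrising with the dihedral group cannot create a corner-compatible set if every minimum dominating set meets all four border lines, since that group permutes the border lines among themselves; and the push-inward exchange is exactly where, as you note, a local repair can export uncovered diagonal rays and trigger an unbounded chain of corrections. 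In short, the target lemma appears to carry the full difficulty of the conjecture itself, so the proposal should be read as a reformulation of the open problem, not as progress toward the inequality; this is consistent with the paper, which offers no argument either.
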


However, we have the following inequality.

\begin{proposition}\label{dominating_number_inequality}
For $n \in \mathbb{N}$, $\gamma(\mathcal{Q}(n+1)) \le \gamma(\mathcal{Q}(n))+1$.
\end{proposition}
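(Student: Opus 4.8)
The plan is to build, from a minimum dominating set of $\mathcal{Q}(n)$, a dominating set of $\mathcal{Q}(n+1)$ that uses only one extra vertex. First I would identify $\mathcal{T}_n$ with the top-left $n\times n$ sub-board of $\mathcal{T}_{n+1}$, that is, with the squares $(p,q)$ such that $1\le p,q\le n$; this realizes $\mathcal{Q}(n)$ as the subgraph of $\mathcal{Q}(n+1)$ induced on those $n^{2}$ vertices. Taking a dominating set $D$ of $\mathcal{Q}(n)$ with $|D|=\gamma(\mathcal{Q}(n))$, I would show that $D'=D\cup\{(n+1,n+1)\}$ is a dominating set of $\mathcal{Q}(n+1)$; this gives $\gamma(\mathcal{Q}(n+1))\le|D'|\le\gamma(\mathcal{Q}(n))+1$ at once.

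The verification splits into two easy parts. The first is that every square $(p,q)$ with $1\le p,q\le n$ is still dominated by $D$ inside $\mathcal{Q}(n+1)$: if two squares of $\{1,\dots,n\}^{2}$ lie on a common row, column or diagonal of $\mathcal{T}_n$, then they lie on the very same line of $\mathcal{T}_{n+1}$, so adjacency is preserved by the embedding, and hence the vertex of $D$ that dominated $(p,q)$ in $\mathcal{Q}(n)$ still dominates it in $\mathcal{Q}(n+1)$. The second part is that the squares outside the embedded sub-board — which are exactly those in row $n+1$ or in column $n+1$ of $\mathcal{T}_{n+1}$ — are all dominated by the single new vertex $(n+1,n+1)$, because a queen at $(n+1,n+1)$ attacks every square of its row and every square of its column. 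Putting the two parts together, $D'$ dominates all of $V(\mathcal{Q}(n+1))$.

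I do not expect any genuine obstacle here. The only points requiring a moment's care are that the row/column/diagonal incidences of $\mathcal{T}_n$ are inherited by $\mathcal{T}_{n+1}$ under the corner embedding, so that domination is not lost when the board grows, and that the new corner square $(n+1,n+1)$ by itself covers the entire new row together with the entire new column — indeed it is the only square of $\mathcal{T}_{n+1}$ with that property. One could symmetrically use any of the four corner embeddings of $\mathcal{T}_n$ into $\mathcal{T}_{n+1}$ and add the vertex at the opposite corner; the choice above is simply the most transparent.
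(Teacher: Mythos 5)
Your proposal is correct and follows essentially the same route as the paper: embed $\mathcal{T}_n$ as a corner sub-board of $\mathcal{T}_{n+1}$, keep a minimum dominating set, and add the opposite corner square $(n+1,n+1)$, which alone covers the new row and column. The only difference is that you spell out the verification in more detail than the paper, which simply calls it immediate.
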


\begin{proof}
Let $S \subset V(\mathcal{Q}(n))$ be a minimum dominating set of $\mathcal{Q}(n)$. Considering that the vertices of
$V(\mathcal{Q}(n))$ occupy the first $n$ rows and first $n$ columns of $\mathcal{Q}(n+1)$ and $S'$ is the vertex
subset of $\mathcal{Q}(n+1)$ corresponding to $S$, it is immediate that $S' \cup \{(n+1,n+1)\}$ is a dominating set
of $\mathcal{Q}(n+1)$ and thus $\gamma(\mathcal{Q}(n+1)) \le \gamma(\mathcal{Q}(n))+1$.
\end{proof}

Table~\ref{domination_numbers} presents the values of $\gamma(\mathcal{Q}(n))$ computed  in \cite{OstergardWeakley2001},
for $n \ge 9$. Almost all of these values satisfy the condition $\gamma(\mathcal{Q}(n))=\lceil \frac{n}{2} \rceil$. In Table~\ref{domination_numbers} we display all known values or $\gamma(\mathcal{Q}(9i+j))$, for $j=0, 1, \dots, 8$ and $i=1, 2, \dots, 14$.

\begin{table}[h!]
\centering
\begin{tabular}{|c|c|c|c|c|c|c|c|c|c|}\hline
             $i \setminus j$  & 0 & 1 & 2 & 3 & 4 & 5 & 6 & 7 & 8 \\ \hline
             1    & 5 & 5 & 5 & 6 & 7 & 8 & 9 & 9 & 9 \\ \hline
             2    & 9 &10 &   &11 &   &12 &   &13 &   \\ \hline
             3    &14 &   &15 &15 &16 &   &17 &   &   \\ \hline
             4    &   &19 &   &20 &   &21 &   &   &   \\ \hline
             5    &23 &   &   &   &25 &   &   &   &27 \\ \hline
             6    &   &   &   &29 &   &   &   &31 &   \\ \hline
             7    &   &   &33 &   &   &   &35 &   &36 \\ \hline
             8    &   &37 &   &   &   &39 &   &   &   \\ \hline
             9    &41 &   &   &   &43 &   &   &   &45 \\ \hline
             10   &   &46 &   &47 &   &   &   &49 &   \\ \hline
             11   &   &   &51 &   &   &   &53 &   &   \\ \hline
             12   &   &55 &   &   &   &57 &   &58 &   \\ \hline
             13   &59 &   &   &   &61 &   &   &   &63 \\ \hline
             14   &   &   &   &65 &65 &66 &   &   &   \\ \hline
\end{tabular}
\caption{Known values of $\gamma\left(\mathcal{Q}(9i+j)\right)$, for $j=0, 1, \dots, 8$ and for $i=1, 2, \dots, 14 $.}\label{domination_numbers}
\end{table}

Some values presented in Table~\ref{domination_numbers} were published by other authors before \cite{OstergardWeakley2001}, such as in \cite{Burger98, BurgerCockayneMynhardt94, GibbonsWebb97}.\\

The domination number can be determined by solving a binary ($0$-$1$) linear programming problem, as it is stated by
the following theorem.

\begin{theorem}
Let $G$ be a graph with adjacency matrix $\mathbf{A}$ and consider the binary linear programming problem
\begin{eqnarray}
\max \; \; \eta(\mathbf{x})                               &  =  & \sum_{j \in V(G)}{x_j}, \nonumber \\
s.t. \; \; \left(\mathbf{A}+\mathbf{I_n}\right)\mathbf{x} & \le & \mathbf{d}, \label{blp} \\
                                               \mathbf{x} & \in & \{0, 1\}^{n}, \nonumber
\end{eqnarray}
where $\mathbf{I_n}$ is the identity matrix of order $n$ and $\mathbf{d}$ is a vector whose entries are the
degrees of the vertices of $G$. If $\mathbf{x}^* \in \{0, 1\}^{n}$ is an optimal solution, denoting the components
of $\mathbf{x}^*$ by $x^*_j$, for all $j \in V(G)$,
$D^* = \{j \mid x^*_j=0\}$
is a minimum dominating set for $G$ and $\gamma(G)=n-\eta(\mathbf{x}^*)$.
\end{theorem}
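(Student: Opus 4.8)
The plan is to show that the feasible $0$--$1$ vectors of \eqref{blp} are exactly the indicator vectors of complements of dominating sets of $G$; once this dictionary is set up, the statement follows by simply reading off the objective value.

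First I would rewrite the matrix inequality \eqref{blp} row by row. For a vertex $i \in V(G)$, the $i$-th row of $\mathbf{A}+\mathbf{I}_n$ is the indicator vector of the closed neighbourhood $N[i] = \{i\} \cup \{j : ij \in E(G)\}$, so the $i$-th constraint reads $\sum_{j \in N[i]} x_j \le d_i$. Since $|N[i]| = d_i + 1$ and each $x_j \in \{0,1\}$, this inequality is equivalent to saying that at least one $j \in N[i]$ has $x_j = 0$, that is, $N[i] \cap \{j : x_j = 0\} \neq \emptyset$. This is the heart of the argument: the constraint is a ``knife-edge'' inequality that, thanks to $|N[i]| = d_i+1$, exactly encodes ``$N[i]$ contains a vanishing coordinate''.

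Next I would use this reformulation in both directions. Given a feasible $\mathbf{x} \in \{0,1\}^{n}$, set $D = \{j : x_j = 0\}$; the reformulation says every vertex $i$ has a vertex of $D$ in its closed neighbourhood, i.e. $D$ dominates $G$, while $\eta(\mathbf{x}) = \sum_j x_j = n - |D|$. Conversely, given any dominating set $D$ of $G$, the vector $\mathbf{x}$ with $x_j = 0$ iff $j \in D$ satisfies $N[i] \cap D \neq \emptyset$ for all $i$, hence is feasible for \eqref{blp}, with $\eta(\mathbf{x}) = n - |D|$. Therefore the set of objective values attainable by feasible points is precisely $\{\, n - |D| : D \text{ is a dominating set of } G \,\}$.

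Finally, maximising $\eta$ over feasible $\mathbf{x}$ is the same as minimising $|D|$ over dominating sets $D$ of $G$. Hence for an optimal solution $\mathbf{x}^*$ the associated set $D^* = \{j : x^*_j = 0\}$ is a dominating set of minimum cardinality, i.e. $\gamma(G) = |D^*|$, and since $\eta(\mathbf{x}^*) = n - |D^*|$ we get $\gamma(G) = n - \eta(\mathbf{x}^*)$. I do not expect a genuine obstacle here: the only point demanding care is the identity $|N[i]| = d_i + 1$, which is what turns the linear constraint into the combinatorial domination condition; everything else is bookkeeping between the two equivalent descriptions.
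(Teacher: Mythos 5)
Your proposal is correct and follows essentially the same route as the paper: both identify the $i$-th constraint with the closed-neighbourhood sum $\sum_{j\in N[i]}x_j\le d_i$, note that with $0$--$1$ variables and $|N[i]|=d_i+1$ this forces a zero coordinate in $N[i]$, and conclude that feasible vectors correspond exactly to complements of dominating sets, so maximizing $\eta$ minimizes $|D|$. The paper phrases the optimality step as a contradiction with a smaller dominating set $D'$, but this is the same bookkeeping you carry out directly.
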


\begin{proof}
The inequalities \eqref{blp} guarantee that the vertex set defined by the null components of every feasible
solution $\mathbf{x}$ is a dominating set. Note that for each vertex $j$, the variables $x_j$ in the associated
inequality correspond to vertices belonging to the closed neighborhood of $j$ and there is at least one of these
variables $x_r=0$, otherwise $\mathbf{x}$ is not feasible. As a consequence, assuming that $D=\{i \mid x_i=0\}$, for
every vertex $j$ there is at least one vertex in $D$ adjacent to $j$.
If $\mathbf{x}^*$ is an optimal solution of the binary linear programming problem,
then the dominating vertex subset $D^*=\{i \mid x^*_i=0\}$ is such that $|D^*|=n-\eta(\mathbf{x}^*)$.
Let us assume that there is a vertex dominating set $D'$ such that $|D'| < |D^*|$. Then, defining $\mathbf{x}'$
with components $x'_{i} = \left\{\begin{array}{ll}
                                   0, & \hbox{if } i \in D';\\
                                   1, & \hbox{otherwise}
                                  \end{array}\right.$,
it is clear that $\mathbf{x}'$ is a feasible solution
for the binary linear programming problem for which $\eta(\mathbf{x}')=n-|D'|$. Since
$|D'| < |D^*|$, it follows that $\eta(\mathbf{x}^*) = n - |D^*| < n - |D'| = \eta(\mathbf{x}')$, which is a
contradiction. Therefore, $\gamma(G) = |D^*| = n - \eta(\mathbf{x}^*)$.
\end{proof}

As a by-product, in the case of the Queens' graph $\mathcal{Q}(n)$, we have the next corollary, where each variable $x_{ij}$ corresponds to a vertex of $\mathcal{Q}(n)$ with coordinates $(i,j)$ (the square of $\mathcal{T}_n$ in line $i$ from top to bottom and column $j$ from left to right).

\begin{corollary}
Consider the binary linear programming problem
\begin{eqnarray}
\max \; \; \eta(\mathbf{x})                             &  =  & \sum_{(i,j) \in [n]^2}{x_{ij}}, \label{blp1}\\
s.t. \; \; \left(\mathbf{A}+\mathbf{I_n}\right)\mathbf{x} & \le & \mathbf{d}, \label{blp2}\\
                                             \mathbf{x} & \in & \{0, 1\}^{n^2}, \label{blp3}
\end{eqnarray}
where $\mathbf{A}$ is the adjacency matrix of $\mathcal{Q}(n)$, $\mathbf{I_n}$ is the identity matrix of order $n$ and
$\mathbf{d}$ is a vector whose entries are the degrees of the vertices $(p,q)$ of $\mathcal{Q}(n)$.
If $\mathbf{x}^* \in \{0, 1\}^{n^2}$ is an optimal solution, denoting the components
of $\mathbf{x}^*$ by $x^*_{pq}$, for all $(p,q) \in [n]^2$, $D^* = \{(i,j) \mid x^*_{ij}=0\}$
is a minimum dominating set of $\mathcal{Q}(n)$ and $\gamma(\mathcal{Q}(n))=n^2-\eta(\mathbf{x}^*)$.
\end{corollary}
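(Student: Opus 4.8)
The plan is to obtain this corollary as an immediate specialization of the preceding theorem, taking $G=\mathcal{Q}(n)$. First I would recall that, under the coordinate labeling of the squares of $\mathcal{T}_n$ introduced above, the vertex set $V(\mathcal{Q}(n))$ is naturally identified with $[n]^2$, so that $\mathcal{Q}(n)$ is a graph on $n^2$ vertices whose adjacency matrix is the given $\mathbf{A}$ and whose degree vector $\mathbf{d}$ has entries $d_{(p,q)}$ prescribed by Theorem~\ref{degrees_theorem}. With this identification in place, the binary linear programming problem \eqref{blp1}--\eqref{blp3} is exactly the problem appearing in the preceding theorem, written with the vertices of $G$ displayed as pairs $(i,j)\in[n]^2$ rather than as a single running index.

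Next I would simply transcribe the conclusion of that theorem. For each square $(p,q)$, the corresponding row of the constraint \eqref{blp2} asserts that the sum of the $x_{ij}$ over the closed neighborhood of $(p,q)$ is at most $d_{(p,q)}$, hence at least one variable in that closed neighborhood equals $0$; consequently the zero set of any feasible $\mathbf{x}$ is a dominating set of $\mathcal{Q}(n)$, and conversely every dominating set arises this way, so maximizing $\eta$ minimizes the cardinality of the associated dominating set. Therefore, if $\mathbf{x}^*$ is optimal, $D^*=\{(i,j)\mid x^*_{ij}=0\}$ is a minimum dominating set of $\mathcal{Q}(n)$ and $\gamma(\mathcal{Q}(n))=n^2-\eta(\mathbf{x}^*)$.

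There is no genuine difficulty to overcome: the mathematical content lies entirely in the preceding theorem, and the only care needed is notational bookkeeping — reading the identity block in \eqref{blp2} as the identity matrix of order $n^2$ (the order of $\mathcal{Q}(n)$) and replacing the generic vertex count $N$ of the theorem by $N=n^2$ throughout. Making these substitutions yields the stated result verbatim.
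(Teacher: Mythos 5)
Your proposal is correct and matches the paper's treatment: the paper states this corollary without a separate proof, precisely as an immediate specialization of the preceding theorem to $G=\mathcal{Q}(n)$ with vertices indexed by $[n]^2$, which is exactly what you do (including the harmless notational point that the identity block should be of order $n^2$, the order of $\mathcal{Q}(n)$).
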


It is computational hard to solve the binary linear programming problem \eqref{blp1} -- \eqref{blp3}, even for not very large values of $n$. So far, by solving  the binary linear programming problem \eqref{blp1} -- \eqref{blp3}, we are able to confirm many of the presented values in Table~\ref{domination_numbers}. However, the domination number problem of the $n$-Queens' graph remains open even for several integers $n \le 131$. For instance, for $n=20$, from the lower bound in \eqref{lower_and_upper_bound} and Proposition~\ref{dominating_number_inequality} it follows that $\gamma(\mathcal{Q}(n)) \in \{10, 11\}$. However, as far as we know, its precise value is unknown.\\

We observe that for all values displayed in Table~\ref{domination_numbers}, $\lfloor \frac{n+1}{2}\rfloor-1 \le \gamma(\mathcal{Q}(n)) \le \lfloor \frac{n+1}{2}\rfloor+1$, if $n \le 16$, and $\gamma\left(\mathcal{Q}(n)\right)=\lfloor \frac{n+1}{2}\rfloor$, if $n \ge 17$.

\section{Spectral properties of $\mathcal{Q}(n)$}\label{sec3}
The eigenvalues of the adjacency matrix of a graph $G$ are simply called the eigenvalues of $G$ and its spectrum
is the multiset $\sigma(G)=\left\{\mu_1^{[m_1]}, \mu_2^{[m_2]}, \ldots, \mu_p^{[m_p]}\right\}$, where $\mu_1 > \mu_2 > \cdots > \mu_p$
are the $p$ distinct eigenvalues (indexed in decreasing order) and $m_i$ (also denoted by $m(\mu_i)$) is the multiplicity of the
eigenvalue $\mu_i$, for $i=1,2, \ldots, p$. When $m_j=1$, we just write $\mu_j$. When necessary, we write $\mu_j(G)$ to clarify that $\mu_j$ is the $j$-th distinct eigenvalue of the graph $G$.
We denote by $\mathcal{E}_G(\mu)$ the eigenspace associated with an eigenvalue $\mu$ of $G$.\\

As it is well-known, the largest eigenvalue of a graph $G$ is between its average degree, $\overline{d}_G$, and its maximum degree,
$\Delta(G)$, and it is equal to both if and only if $G$ is regular. Therefore, from \eqref{everage_degree} and \eqref{min_max_degree}, we may conclude that

\begin{equation}\label{bounds_on_index}
\frac{2(n-1)(5n-1)}{3n} \le \mu_1(\mathcal{Q}(n)) \le \left\{\begin{array}{ll}
                                                                     4n-5 & \hbox{if $n$ is even};\\
                                                                     4n-4 & \hbox{if $n$ is odd}.
                                                              \end{array}\right.
\end{equation}

The characteristic polynomials and spectra of $\mathcal{Q}(2)$ and $\mathcal{Q}(3)$ are the following.
\begin{equation*}
\begin{split}
q_2(x) &= (x+1)^3(x-3) \qquad \hbox{ and } \qquad  \sigma(\mathcal{Q}(2)) = \left\{3,-1^{[3]}\right\}; \\
q_3(x) &= (-1 + x)(1 + x)^2(-8 - 5 x + x^2)(-1 + 2 x + x^2)^2 \qquad \hbox{ and } \\
       &  \qquad \sigma(\mathcal{Q}(3)) = \left\{\frac{5 + \sqrt{57}}{2}, 1, (-1+\sqrt{2})^{[2]}, -1^{[2]},\frac{5 - \sqrt{57}}{2}, (-1-\sqrt{2})^{[2]}\right\}.
\end{split}
\end{equation*}

\subsection{A lower bound on the least eigenvalue of a graph}
We start the study of spectral properties introducing a general result about lower bounds on the least eigenvalues of graphs. Before that it is worth to define a couple of parameters associated with edge clique partitions. The edge clique
partitions (ECP for short) were introduced in \cite{Orlin1977}, where the \textit{content} of a graph
$G$ was defined as the minimum number of edge disjoint cliques whose union includes all the edges of $G$
and denoted by $C(G)$. Such minimum  ECP is called in \cite{Orlin1977} \textit{content decomposition} of $G$ and, as proved there, in general, its determination is $\mathbf{NP}$-Complete.

\begin{definition}(Clique degree and maximum clique degree)\label{new_graph_parameters}
Consider a graph $G$ and an ECP, $P=\{ E_i \mid i \in I \}$ and then $V_i=V(G[E_i])$ is a clique of $G$
for every $i \in I$. For any $v \in V(G)$, the clique degree of $v$ relative to $P$, denoted $m_v(P)$, is the number of cliques $V_i$ containing the vertex $v$, and the maximum clique degree of $G$ relative to P, denoted $m_G(P)$, is the maximum of clique degrees of the vertices of $G$ relative to $P$.
\end{definition}

From Definition~\ref{new_graph_parameters}, considering an ECP, $P=\{ E_i \mid i \in I \}$, the parameters $m_v(P)$ and $m_G(P)$ can be expressed as follows.
\begin{eqnarray}
m_v(P) &=& | \{ i \in I \mid v \in V(G[E_i]) \} | \qquad \forall v \in V(G); \label{parameter_mv}\\
m_G(P)   &=& \max \{ m_v \mid v \in V(G) \}. \label{parameter_m}
\end{eqnarray}

\begin{remark}\label{content_numbe_lower_bound_remark}
It is clear that if $P$ is an ECP of $G$, then $m_G(P)$ is not greater than $|P|$. In particular, if $P$ is a content decomposition of $G$, then $m_G(P) \le C(G)$.
\end{remark}

Using the above defined graph parameters, the next theorem states a general lower bound on the least eigenvalue of a graph. As it will be seen later, there are extremal graphs for which this lower bound is attained, namely the $n$-Queens' graph $\mathcal{Q}(n)$, with $n \ge 4$, considering an ECP whose parts are the edge subsets associated with the $n$ columns, $n$ rows, $2n-3$ left to right diagonals and $2n-3$ right to left diagonals of the corresponding $n \times n$ chessboard $\mathcal{T}_n$.

\begin{theorem}\label{general_lower_bound}
Let  $P=\{ E_i \mid i \in I \}$ be an ECP of a graph $G$, $m=m_G(P)$ and $m_v=m_v(P)$ for every $v \in V(G)$. Then
\begin{enumerate}
\item If $\mu$ is an eigenvalue of $G$, then $\mu \ge -m$. \label{marca}
\item $-m$ is an eigenvalue of $G$ iff there exists a vector $X \ne \mathbf{0}$ such that
      \begin{enumerate}
      \item $\sum \limits_{j \in V(G[E_i])} x_j = 0$, for every $i \in I$ and \label{cond1}
      \item $\forall v \in V(G) \;\; x_v=0$ whenever $m \ne m_v$. \label{cond2}
      \end{enumerate}\label{marca2}
      In the positive case, $X$ is an eigenvector associated with the eigenvalue $-m$.
\end{enumerate}
 \end{theorem}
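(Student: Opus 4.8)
The plan is to relate the adjacency matrix $\mathbf{A}$ of $G$ to the edge clique partition $P$ via a sum of rank-one-ish positive semidefinite pieces. For each $i \in I$, let $\mathbf{A}_i$ be the adjacency matrix of the clique $G[E_i]$, viewed as an $n\times n$ matrix supported on $V_i := V(G[E_i])$ (zero outside). Since $P$ is an edge partition, every edge of $G$ lies in exactly one $E_i$, so $\mathbf{A} = \sum_{i\in I}\mathbf{A}_i$. Similarly, if $\mathbf{J}_i$ denotes the all-ones matrix on the index set $V_i$ (zero elsewhere) and $\mathbf{D}_i$ the diagonal $0/1$ matrix with ones exactly on $V_i$, then $\mathbf{A}_i = \mathbf{J}_i - \mathbf{D}_i$. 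Summing, $\mathbf{A} = \sum_{i\in I}\mathbf{J}_i - \sum_{i\in I}\mathbf{D}_i = \sum_{i\in I}\mathbf{J}_i - \mathbf{M}$, where $\mathbf{M}$ is the diagonal matrix with $(\mathbf{M})_{vv} = m_v$ for every $v\in V(G)$. Hence
\[
\mathbf{A} + m\,\mathbf{I} = \sum_{i\in I}\mathbf{J}_i + (m\,\mathbf{I} - \mathbf{M}),
\]
and the right-hand side is a sum of positive semidefinite matrices: each $\mathbf{J}_i$ is PSD (it is $\mathbf{1}_{V_i}\mathbf{1}_{V_i}^{\top}$), and $m\mathbf{I}-\mathbf{M}$ is diagonal with entries $m - m_v \ge 0$ by the definition $m = m_G(P) = \max_v m_v$. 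Therefore $\mathbf{A}+m\mathbf{I} \succeq 0$, which gives $\mu \ge -m$ for every eigenvalue $\mu$ of $G$, proving part~\ref{marca}.

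For part~\ref{marca2}, note that $-m$ is an eigenvalue of $G$ exactly when $\mathbf{A}+m\mathbf{I}$ is singular, i.e.\ when there is $X\ne\mathbf{0}$ with $X^{\top}(\mathbf{A}+m\mathbf{I})X = 0$; since $\mathbf{A}+m\mathbf{I}$ is PSD, this is equivalent to $(\mathbf{A}+m\mathbf{I})X = \mathbf{0}$, and also equivalent to $X$ lying in the kernel of every summand simultaneously (a sum of PSD matrices vanishes on $X$ iff each does). Now $\mathbf{J}_i X = \mathbf{0}$ means $\mathbf{1}_{V_i}^{\top}X = 0$, i.e.\ $\sum_{j\in V(G[E_i])} x_j = 0$, which is condition~\ref{cond1}; and $(m\mathbf{I}-\mathbf{M})X = \mathbf{0}$ means $(m - m_v)x_v = 0$ for all $v$, i.e.\ $x_v = 0$ whenever $m_v \ne m$, which is condition~\ref{cond2}. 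Conversely, any $X\ne\mathbf{0}$ satisfying \ref{cond1} and \ref{cond2} is killed by each $\mathbf{J}_i$ and by $m\mathbf{I}-\mathbf{M}$, hence by $\mathbf{A}+m\mathbf{I}$, so it is an eigenvector of $G$ for the eigenvalue $-m$. This establishes the equivalence and the final sentence of the statement.

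The argument is essentially a one-line positive-semidefinite decomposition, so I do not anticipate a serious obstacle; the only thing to be careful about is bookkeeping with the supports of the matrices $\mathbf{J}_i$, $\mathbf{D}_i$, $\mathbf{M}$ (making sure they are all written as $n\times n$ matrices indexed by $V(G)$, padded with zeros), and the use of the elementary fact that for positive semidefinite matrices $B_1,\dots,B_k$ one has $\big(\sum B_j\big)X = \mathbf{0}$ if and only if $B_jX=\mathbf{0}$ for every $j$ — which follows by evaluating the quadratic form $X^{\top}(\sum B_j)X = \sum X^{\top}B_jX$, a sum of nonnegative terms.
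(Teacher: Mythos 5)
Your proof is correct and is essentially the paper's argument recast in matrix form: the decomposition $\mathbf{A}+m\mathbf{I}=\sum_{i\in I}\mathbf{J}_i+(m\mathbf{I}-\mathbf{M})$ is exactly the quadratic-form identity $X^{\top}(\mathbf{A}+m\mathbf{I})X=\sum_{i\in I}\bigl(\sum_{v\in V(G[E_i])}x_v\bigr)^{2}+\sum_{v\in V(G)}(m-m_v)x_v^{2}$ that the paper uses, and both proofs obtain part~2 from the vanishing of these nonnegative terms. The only cosmetic difference is that you conclude that $X$ is an eigenvector via kernels of the positive semidefinite summands, while the paper argues through the Rayleigh quotient.
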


\begin{proof}
Consider \textbf{A} as the adjacency matrix of $G$.
\begin{enumerate}
\item Let $X$ be an eigenvector of $G$ associated with an eigenvalue $\mu$. Then
	  \begin{align*}
		(\mu+m) \| X \|^2 & = X^T (\mu X + m X) = X^T (\mathbf{A} + m I_n) X \\
		& = \sum_{i \in I} \sum_{uv \in E_i} \left( 2 x_u x_v \right) + m \| X \|^2\\
		& = \sum_{i \in I} \left( \sum_{v \in V(G[E_i])} x_v \right)^2
		- \sum \limits_{v \in V(G)}{m_vx_v^2} + m \| X \|^2\\
		& = \sum_{i \in I} \left( \sum_{v \in V(G[E_i])} x_v \right)^2 +
		\sum \limits_{v \in V(G)} (m-m_v)x_v^2 \ge 0.
	  \end{align*}\label{lower_boumd}
\item If $-m$ is an eigenvalue of $G$, then, from the proof of item \ref{lower_boumd}, equalities \ref{cond1} and \ref{cond2} follow.
      Conversely, if there exists a vector $X \ne \mathbf{0}$ for which \ref{cond1} and \ref{cond2} hold, then
      \begin{eqnarray*}
       0 &=& \sum_{i \in I} \left( \sum_{v \in V(G[E_i])} x_v \right)^2 + \sum \limits_{v \in V(G)} (m-m_v)x_v^2\\
         &=& \sum_{i \in I} \left( \sum_{v \in V(G[E_i])} x_v \right)^2 - \sum \limits_{v \in V(G)}{m_vx_v^2} + m \| X \|^2\\
         &=& \sum_{i \in I} \sum_{uv \in E_i} \left( 2 x_u x_v \right) + m \| X \|^2\\
         &=& X^T \mathbf{A} X + m \| X \|^2.
      \end{eqnarray*}
      Assuming that $\mu$ is the least eigenvalue of $G$, $-m = \frac{X^T \mathbf{A}  X}{\| X \|^2} \ge \mu$.
      Since $\mu \ge -m$ (by item \ref{lower_boumd}), we obtain $-m \ge \mu \ge -m$ and thus $\mu = -m$.
      In the positive case, it is clear that $X$ is an eigenvector associated to the eigenvalue $-m$.
      \qedhere
\end{enumerate}
\end{proof}

Theorem~\ref{general_lower_bound} allows to obtain a spectral lower bound for the content number of a graph.

\begin{corollary}
Let $\mu$ be the least eigenvalue of a graph $G$. Then $-\mu \le C(G)$.
\end{corollary}

\begin{proof}
If $P$ is a content decomposition of $G$, then, according to Remark~\ref{content_numbe_lower_bound_remark}, $m_G(P) \le C(G)$. By Theorem~\ref{general_lower_bound} $-m_G(P) \le \mu$ and so $-\mu \le C(G)$.
\end{proof}

The following corollary is a direct consequence of Theorem~\ref{general_lower_bound}.

\begin{corollary}
Let $G$ be a graph of order $n$ and let $X$ be a vector of $\mathbb{R}^n \setminus \{\mathbf{0}\}$. Then
$X \in \mathcal{E}_{G}(-m)$ iff the conditions \ref{cond1} and \ref{cond2} of Theorem~\ref{general_lower_bound} hold.
\end{corollary}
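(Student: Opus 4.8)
The plan is to extract from the proof of Theorem~\ref{general_lower_bound} the sum-of-squares identity that it establishes for \emph{every} vector, not merely for eigenvectors. Writing $\mathbf{A}$ for the adjacency matrix of $G$, that argument shows that for all $X\in\mathbb{R}^n$,
\[
X^T(\mathbf{A}+mI_n)X \;=\; \sum_{i\in I}\Big(\sum_{v\in V(G[E_i])}x_v\Big)^2 \;+\; \sum_{v\in V(G)}(m-m_v)\,x_v^2 .
\]
Since $m=m_G(P)\ge m_v$ for every $v$, the right-hand side is a sum of nonnegative terms; in particular $\mathbf{A}+mI_n$ is positive semidefinite, which is precisely item~\ref{marca} of Theorem~\ref{general_lower_bound}. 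The whole corollary will be read off from the observation that this quadratic form vanishes on $X$ exactly when each summand vanishes, i.e.\ exactly when \ref{cond1} and \ref{cond2} hold.

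For the forward implication I would argue: if $X\in\mathcal{E}_G(-m)$, then $(\mathbf{A}+mI_n)X=\mathbf{0}$, so $X^T(\mathbf{A}+mI_n)X=0$; as the summands above are nonnegative, each must be $0$. The vanishing of $\big(\sum_{v\in V(G[E_i])}x_v\big)^2$ for every $i$ is condition~\ref{cond1}, and the vanishing of $(m-m_v)x_v^2$ for every $v$ forces $x_v=0$ whenever $m\ne m_v$, which is condition~\ref{cond2}.

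For the converse, suppose $X\ne\mathbf{0}$ satisfies \ref{cond1} and \ref{cond2}. Then every summand on the right-hand side is $0$, so $X^T(\mathbf{A}+mI_n)X=0$. Now I would invoke the standard fact that a positive semidefinite matrix $M$ annihilates exactly those vectors on which its quadratic form vanishes: factoring $M=\mathbf{A}+mI_n=B^TB$ gives $\|BX\|^2=0$, hence $BX=\mathbf{0}$ and therefore $(\mathbf{A}+mI_n)X=B^TBX=\mathbf{0}$, that is, $\mathbf{A}X=-mX$. Since $X\ne\mathbf{0}$, this means $-m$ is an eigenvalue of $G$ and $X$ lies in its eigenspace, so $X\in\mathcal{E}_G(-m)$.

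I do not expect any real obstacle here. The only step that is not a mechanical rereading of the displayed identity is the implication ``$X^TMX=0\Rightarrow MX=\mathbf{0}$'' for positive semidefinite $M$, and this follows at once from a Cholesky-type factorization (or from the spectral theorem applied to $\mathbf{A}+mI_n$). In effect the statement is just the vector-level sharpening of Theorem~\ref{general_lower_bound}(\ref{marca2}): that item asserts the \emph{existence} of a nonzero $X$ satisfying \ref{cond1}--\ref{cond2} precisely when $-m$ is an eigenvalue, whereas here we characterize, for a prescribed $X$, when $X$ belongs to $\mathcal{E}_G(-m)$.
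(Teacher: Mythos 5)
Your argument is correct and is essentially the route the paper intends: the corollary is stated there as a direct consequence of Theorem~\ref{general_lower_bound}, whose proof contains exactly the sum-of-squares identity you isolate, applied at the level of an individual vector. Your only addition is to make explicit the standard fact that $X^T(\mathbf{A}+mI_n)X=0$ with $\mathbf{A}+mI_n$ positive semidefinite forces $(\mathbf{A}+mI_n)X=\mathbf{0}$, a step the paper's proof of item~\ref{marca2} leaves implicit when it concludes that ``it is clear that $X$ is an eigenvector''.
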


\subsection{Eigenvalues and eigenvectors of the $n$-Queens' graph}

From now on we consider the particular case of the $n$-Queens' graph $\mathcal{Q}(n)$.

\begin{theorem} \label{nessufcond}
Let $n\in\mathbb{N}$ such that $n \ge 4$.
\begin{enumerate}
\item If $\mu$ is an eigenvalue of $\mathcal{Q}(n)$, then $\mu \ge -4$.
\item $-4 \in \sigma(\mathcal{Q}(n))$ iff there exists a vector $X \in \mathbb{R}^{n^2}\setminus \{\mathbf{0}\}$
      such that
      \begin{enumerate}
      \item $\sum \limits_{j=1}^n {x_{(k,j)}} = 0$ and $\sum \limits_{i=1}^n {x_{(i,k)}} = 0$, for every $k \in [n]$, \label{nessuf_1}
      \item $\sum \limits_{i+j=k+2}{x_{(i,j)}} = 0$, for every $k \in [2n-3]$, \label{nessuf_2}
      \item $\sum \limits_{i-j=k+1-n} x_{(i,j)} = 0$, for every $k \in [2n-3]$, \label{nessuf_3}
      \item $x_{(1,1)} = x_{(1,n)} = x_{(n,1)} = x_{(n,n)} = 0$. \label{nessuf_4}
      \end{enumerate}
      In the positive case, $X$ is an eigenvector associated with the eigenvalue $-4$.
\end{enumerate}
\end{theorem}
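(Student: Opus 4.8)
The plan is to obtain both parts by specialising Theorem~\ref{general_lower_bound} to one natural ECP of $\mathcal{Q}(n)$. Consider the edge clique partition $P$ of $\mathcal{Q}(n)$ whose parts are: the $n$ edge subsets induced by the rows of $\mathcal{T}_n$; the $n$ edge subsets induced by its columns; the $2n-3$ edge subsets induced by the left-to-right diagonals containing at least two squares, namely those with $i+j \in \{3,4,\dots,2n-1\}$; and the $2n-3$ edge subsets induced by the right-to-left diagonals containing at least two squares, namely those with $i-j \in \{2-n,\dots,n-2\}$. First I would verify that $P$ is indeed an ECP: each such line induces a clique of $\mathcal{Q}(n)$ by the very definition of adjacency, and any two distinct adjacent squares lie on exactly one common line among rows, columns and the two diagonal families, so these edge subsets partition $E(\mathcal{Q}(n))$.

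Next I would compute the clique degrees relative to $P$. Every square of $\mathcal{T}_n$ lies on exactly one row, one column, one left-to-right diagonal and one right-to-left diagonal; the only squares lying on a diagonal reduced to a single square (hence on a diagonal that is not a part of $P$) are the four corners $(1,1)$, $(1,n)$, $(n,1)$ and $(n,n)$, and each of them lies on exactly one such degenerate diagonal. Hence $m_v(P)=4$ for every non-corner vertex $v$ and $m_v(P)=3$ for each corner, so that $m=m_{\mathcal{Q}(n)}(P)=4$. The first assertion of Theorem~\ref{general_lower_bound} then yields $\mu\ge -4$ for every eigenvalue $\mu$ of $\mathcal{Q}(n)$, which is part~1.

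For part~2 I would simply translate the second assertion of Theorem~\ref{general_lower_bound} through the identification above. Condition~\ref{cond1} of that theorem, requiring the entries of $X$ to sum to zero over the vertex set of each clique of $P$, unpacks into: all row sums and all column sums vanish, which is \ref{nessuf_1}; the sums $\sum_{i+j=k+2}x_{(i,j)}$ vanish for $k\in[2n-3]$, which is \ref{nessuf_2}; and the sums $\sum_{i-j=k+1-n}x_{(i,j)}$ vanish for $k\in[2n-3]$, which is \ref{nessuf_3}. Here one checks that as $k$ runs over $[2n-3]$ the value $k+2$ runs over $\{3,\dots,2n-1\}$ and $k+1-n$ over $\{2-n,\dots,n-2\}$, so these conditions enumerate precisely the non-trivial diagonals in each direction. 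Condition~\ref{cond2}, that $x_v=0$ whenever $m_v(P)\ne m=4$, becomes exactly $x_{(1,1)}=x_{(1,n)}=x_{(n,1)}=x_{(n,n)}=0$, i.e.\ \ref{nessuf_4}, since the corners are the only vertices of clique degree different from $4$. Therefore $-4\in\sigma(\mathcal{Q}(n))$ if and only if a nonzero $X$ satisfying \ref{nessuf_1}--\ref{nessuf_4} exists, and in that case $X$ is an eigenvector of $\mathcal{Q}(n)$ for the eigenvalue $-4$.

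I do not expect a serious obstacle here: the statement is essentially a dictionary translation of Theorem~\ref{general_lower_bound} for a concrete ECP. The only points demanding care are checking that $P$ is a genuine partition of the edge set (adjacent squares determine a unique line), and the bookkeeping that separates the $2n-3$ non-trivial diagonals in each direction from the two degenerate corner diagonals, together with the corresponding fact that the four corners are exactly the vertices of clique degree $3$ rather than $4$ (the assumption $n\ge 4$ ensures every non-corner square does lie on a non-trivial diagonal in both directions). It is precisely this separation that accounts for the extra corner condition~\ref{nessuf_4}.
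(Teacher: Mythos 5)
Your proposal is correct and follows essentially the same route as the paper: it specialises Theorem~\ref{general_lower_bound} to the ECP of $\mathcal{Q}(n)$ formed by the $n$ rows, $n$ columns and the $2n-3$ nontrivial diagonals in each direction, observes that $m_v(P)=3$ exactly at the four corners and $4$ elsewhere (so $m=4$), and translates conditions \ref{cond1} and \ref{cond2} into \ref{nessuf_1}--\ref{nessuf_4}. The extra checks you include (that the line subsets genuinely partition the edge set and that the index ranges enumerate the nontrivial diagonals) are sound and only make explicit what the paper leaves implicit.
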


\begin{proof}
The proof follows taking into account that the summations \ref{nessuf_1}-\ref{nessuf_3} correspond to the
summations \ref{cond1} in Theorem~\ref{general_lower_bound}. Here, the cliques obtained from the ECP, $P$, of $\mathcal{Q}(n)$ are the cliques with vertices associated with each of the $n$ columns, $n$ rows, $2n-3$ left to right diagonals and $2n-3$ right to left diagonals. Denoting the vertices of $\mathcal{Q}(n)$ by their coordinates $(i,j)$ in the corresponding chessboard $\mathcal{T}_n$,
$
m_{(i,j)}(P) = \left\{\begin{array}{ll}
                        3, & \hbox{if } (i,j) \in \{ 1, n\}^2;\\
                        4, & \hbox{otherwise}
                 \end{array}\right.
$
and thus $m_{\mathcal{Q}(n)}(P)=4$. Therefore, the equalities \ref{nessuf_4} correspond to the conditions \ref{cond2} in Theorem~\ref{general_lower_bound}.
\end{proof}

As a consequence of the previous Theorem we have the following result.

\begin{corollary}\label{non-main_condition1}
Let $n \geq 4$ and $X \in \mathbb{R}^{n^2}$. Then $X \in \mathcal{E}_{\mathcal{Q}(n)}(-4)$ if and only if the conditions \ref{nessuf_1}-\ref{nessuf_4} of Theorem~\ref{nessufcond} hold.
\end{corollary}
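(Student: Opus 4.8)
The plan is to derive this directly from Corollary ``$X \in \mathcal{E}_G(-m)$ iff conditions \ref{cond1} and \ref{cond2} hold'' (the last corollary stated before Theorem~\ref{nessufcond}), applied to the specific graph $G = \mathcal{Q}(n)$ and the specific ECP $P$ already fixed in the proof of Theorem~\ref{nessufcond}. First I would recall that in that proof it was established that, for the ECP $P$ of $\mathcal{Q}(n)$ consisting of the $n$ column-cliques, the $n$ row-cliques, the $2n-3$ (non-trivial) left-to-right diagonal cliques and the $2n-3$ right-to-left diagonal cliques, one has $m_{\mathcal{Q}(n)}(P) = 4$, with $m_{(i,j)}(P) = 3$ exactly at the four corner squares $(1,1), (1,n), (n,1), (n,n)$ and $m_{(i,j)}(P) = 4$ everywhere else. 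Hence $-m_{\mathcal{Q}(n)}(P) = -4$, so the eigenspace $\mathcal{E}_{\mathcal{Q}(n)}(-4)$ is exactly the $\mathcal{E}_G(-m)$ of the general corollary.

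Next I would unwind what conditions \ref{cond1} and \ref{cond2} of Theorem~\ref{general_lower_bound} say for this particular $P$. Condition \ref{cond1}, $\sum_{j \in V(G[E_i])} x_j = 0$ for every $i \in I$, splits into four families according to the four types of cliques in $P$: summing over a column gives $\sum_{i=1}^n x_{(i,k)} = 0$ and summing over a row gives $\sum_{j=1}^n x_{(k,j)} = 0$ for each $k \in [n]$, which is precisely \ref{nessuf_1}; summing over the $k$-th left-to-right diagonal, i.e.\ the set of squares with $i+j = k+2$, gives \ref{nessuf_2} for $k \in [2n-3]$; summing over the $k$-th right-to-left diagonal, i.e.\ the squares with $i-j = k+1-n$, gives \ref{nessuf_3} for $k \in [2n-3]$. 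Here I should note that the length-$1$ diagonals (the single corner squares) are not included as cliques in $P$ because they carry no edge, which is why the diagonal index ranges over $[2n-3]$ rather than $[2n-1]$. Condition \ref{cond2}, namely $x_v = 0$ whenever $m_v \ne m = 4$, becomes exactly $x_{(1,1)} = x_{(1,n)} = x_{(n,1)} = x_{(n,n)} = 0$, since the corners are the only vertices with clique degree $3 \ne 4$; this is \ref{nessuf_4}.

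Having matched the two lists of conditions term by term, the corollary follows immediately: by the general corollary, $X \in \mathcal{E}_{\mathcal{Q}(n)}(-4)$ iff \ref{cond1}--\ref{cond2} hold for $P$, and we have just shown \ref{cond1}--\ref{cond2} for this $P$ are literally conditions \ref{nessuf_1}--\ref{nessuf_4}. I expect no real obstacle here; the only thing requiring a little care is the bookkeeping of the diagonal index range and the verification that the ``degenerate'' cliques (single squares, and also rows/columns are always non-degenerate for $n \ge 2$) are handled consistently with how Theorem~\ref{nessufcond} set up $P$ — in particular that the corner squares genuinely lie in only $3$ cliques of $P$, one row, one column, and one diagonal, the other diagonal through a corner being the trivial one that was excluded. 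Once that is checked, the equivalence is purely a restatement.
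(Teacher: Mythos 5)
Your proposal is correct and follows essentially the same route as the paper: the paper gives no separate argument, presenting the corollary as an immediate consequence of Theorem~\ref{nessufcond}, whose proof already fixes the ECP, computes the clique degrees (equal to $3$ exactly at the four corners, $4$ elsewhere, so $m_{\mathcal{Q}(n)}(P)=4$), and identifies conditions \ref{nessuf_1}--\ref{nessuf_4} with conditions \ref{cond1}--\ref{cond2} of Theorem~\ref{general_lower_bound}, after which the eigenspace characterization follows from the general corollary exactly as you describe. Your extra care about the excluded length-one corner diagonals and the index range $[2n-3]$ is consistent with the paper's setup.
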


From the computations, for several values of $n \ge 4$, it seems that $n-4$ and $-4$ are eigenvalues of $\mathcal{Q}(n)$ with multiplicities
$$
m(n-4) = \left\{\begin{array}{ll}
                  (n-2)/2, & \hbox{if $n$ is even;} \\
                  (n+1)/2, & \hbox{if $n$ is odd,}
            \end{array}
          \right.
$$
and $m(-4)=(n-3)^2$. It also seems that there are other integer eigenvalues when $n$ is odd, and that they are all simple.

\begin{table}[h!]
\centering
\begin{tabular}{|c|c|}\hline
              $n$                   &  Distinct integer eigenvalues      \\ \hline
               3                    &  1, -1                             \\ \hline
               4                    &  0, -4                             \\ \hline
               5                    &  1, 0, -3, -4                      \\ \hline
               6                    &  2, -4                             \\ \hline
               7                    &  3, 2, 1, -2, -3, -4               \\ \hline
               8                    &  4, -4                             \\ \hline
               9                    &  5, 4, 3, 2, -1, -2, -3, -4        \\ \hline
               10                   &  6, -4                             \\ \hline
               11                   &  7, 6, 5, 4, 3, 0, -1, -2, -3, -4  \\ \hline
 \end{tabular}
\caption{Distinct integer eigenvalues of $\mathcal{Q}(n)$, when $3 \le n \le 11$.}\label{computed_igenvalues}
\end{table}

In what follows we will see that, for $n \ge 4$, $-4$ is an eigenvalue of $\mathcal{Q}(n)$ with multiplicity $(n-3)^2$.
From Corollary~\ref{non-main_condition1} it follows that the multiplicity of $-4$ as an eigenvalue of $\mathcal{Q}(n)$
coincides with the corank of the coefficient matrix of the system of $6n-2$ linear equations
\ref{nessuf_1}-\ref{nessuf_4}. Therefore, to say that $m(-4)=(n-3)^2$ is equivalent to say that the rank of the coefficient
matrix of the system of $6n$ linear equations \ref{nessuf_1}-\ref{nessuf_4} is $6n - 9$ (since $n^2 - 6n + 9 = (n-3)^2$).

Before we continue, we need to define a family of vectors
$$
\mathcal{F}_n = \{X_n^{(a,b)} \in \mathbb{R}^{n^2} \mid (a,b) \in [n-3]^2 \}
$$
where $X_4$ is the vector presented in Table~\ref{vector_X4} and $X_n^{(a,b)}$ is the vector defined by

\begin{table}[h!]
\[ \scriptsize
\begin{array}{|c|c|c|c|} \hline
		0          & \textbf{1} & \textbf{-1}& 0           \\ \hline
		\textbf{-1}& 0          & 0          & \textbf{1}  \\ \hline
		\textbf{1} & 0          & 0          & \textbf{-1} \\ \hline
		0          &\textbf{-1} & \textbf{1} & 0           \\ \hline
\end{array}
\]
\caption{The vector $X_4$.}\label{vector_X4}
\end{table}

\begin{equation}\label{ev_components}
\big[X_n^{(a,b)}\big]_{(i,j)} = \begin{cases}
				                \big[X_4\big]_{(i-a+1,j-b+1)}, & \text{ if } (i,j)\in A \times B;\\
				                0,                             & \text{otherwise,}
                                \end{cases}
\end{equation}
with $A = \{a,a+1,a+2,a+3\}$ and $B = \{b,b+1,b+2,b+3\}$. For instance, for $n=5$, $\mathcal{F}_5$ is the family of four vectors depicted in Table~\ref{vetors_11-12-21-22}.

\begin{table}[h!]
\[ \scriptsize \!\!\!\!\!\!
\begin{array}{|c|c|c|c|c|} \hline
		0          & \textbf{1} & \textbf{-1}& 0          & 0 \\ \hline
		\textbf{-1}& 0          & 0          & \textbf{1} & 0 \\ \hline
		\textbf{1} & 0          & 0          & \textbf{-1}& 0 \\ \hline
		0          &\textbf{-1} & \textbf{1} & 0          & 0 \\ \hline
		0          & 0          & 0          & 0          & 0 \\ \hline
\end{array}
\quad
\begin{array}{|c|c|c|c|c|} \hline
		0 & 0          & \textbf{1} & \textbf{-1}& 0 \\ \hline
		0 & \textbf{-1}& 0          & 0          & \textbf{1} \\ \hline
	    0 & \textbf{1} & 0          & 0          & \textbf{-1} \\ \hline
		0 & 0          & \textbf{-1}& \textbf{1} & 0 \\ \hline
		0 & 0          & 0          & 0          & 0 \\ \hline
\end{array}
\quad
\begin{array}{|c|c|c|c|c|}\hline
0          & 0          & 0          & 0          & 0 \\ \hline
0          & \textbf{1} & \textbf{-1}& 0          & 0\\ \hline
\textbf{-1}& 0          & 0          & \textbf{1} & 0 \\ \hline
\textbf{1} & 0          & 0          & \textbf{-1}& 0 \\ \hline
0          & \textbf{-1}& \textbf{1} & 0          & 0 \\ \hline
\end{array}
\quad
\begin{array}{|c|c|c|c|c|}\hline
0 & 0          & 0          & 0          & 0  \\ \hline
0 & 0          & \textbf{1} & \textbf{-1}& 0 \\ \hline
0 & \textbf{-1}& 0          & 0          & \textbf{1} \\ \hline
0 & \textbf{1} & 0          & 0          & \textbf{-1} \\ \hline
0 & 0          & \textbf{-1}& \textbf{1} & 0 \\ \hline
\end{array}
\]
\caption{The vectors $X_5^{(1,1)}$, $X_5^{(1,2)}$, $X_5^{(2,1)}$, and $X_5^{(2,2)}$.} \label{vetors_11-12-21-22}
\end{table}

\begin{theorem}\label{th-4eigenvalue}
$-4$ is an eigenvalue of $\mathcal{Q}(n)$ with multiplicity $(n-3)^2$ and $\mathcal{F}_n$ is a basis for
$\mathcal{E}_{\mathcal{Q}(n)} (-4)$.
\end{theorem}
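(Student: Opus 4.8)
The plan is to prove the theorem in two halves: first that each $X_n^{(a,b)}$ lies in $\mathcal{E}_{\mathcal{Q}(n)}(-4)$, and second that the family $\mathcal{F}_n$ is linearly independent and spans the whole eigenspace, which together force $m(-4) = |\mathcal{F}_n| = (n-3)^2$. For the first half, I would invoke Corollary~\ref{non-main_condition1}: it suffices to check that each $X_n^{(a,b)}$ satisfies the conditions \ref{nessuf_1}--\ref{nessuf_4} of Theorem~\ref{nessufcond}. Since $X_n^{(a,b)}$ is supported on the $4\times 4$ block $A\times B$ and equals a shifted copy of $X_4$ there, a row sum, column sum, or diagonal sum of $X_n^{(a,b)}$ is either identically zero (when the line misses the block) or equals the corresponding line sum of $X_4$ inside the $4\times 4$ board. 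So the work reduces to the finite check that $X_4$ itself satisfies \ref{nessuf_1}--\ref{nessuf_4} for $n=4$: all four row sums and column sums vanish, all $2n-3=5$ sums along each diagonal direction vanish, and the four corner entries are $0$. This is immediate from Table~\ref{vector_X4}. A small caveat I would flag: one must confirm that a diagonal of the big $n\times n$ board, when intersected with $A\times B$, is exactly a diagonal of the embedded $4\times4$ board (not a truncated one) — this holds because $A$ and $B$ are four consecutive indices, so the intersection is a contiguous diagonal segment, and $X_4$'s diagonal sums vanish on every diagonal including the short ones.

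For linear independence of $\mathcal{F}_n$, I would order the pairs $(a,b)\in[n-3]^2$ lexicographically and look at the entry in position $(a+1,b)$ (the leftmost entry of the second row of the block, which is $[X_4]_{(2,1)}=-1\neq 0$). For the lexicographically smallest pair with a nonzero coefficient in a hypothetical dependence, the entry at its "top-left-ish" distinguished position is not hit by any later block, yielding a contradiction; more cleanly, one can argue that $X_n^{(a,b)}$ is the unique member of $\mathcal{F}_n$ whose support contains the cell $(a,b+1)$ as its topmost-leftmost nonzero cell, giving a triangular structure. So $\dim \mathcal{E}_{\mathcal{Q}(n)}(-4) \geq (n-3)^2$.

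The main obstacle — and the part needing real work — is the reverse inequality $\dim\mathcal{E}_{\mathcal{Q}(n)}(-4)\leq (n-3)^2$, equivalently that the coefficient matrix of the $6n$ linear equations \ref{nessuf_1}--\ref{nessuf_4} has rank exactly $6n-9$. One approach is to exhibit $6n-9$ equations that are linearly independent and show the remaining nine are redundant: summing all row-sum equations equals summing all column-sum equations equals summing all left-diagonal equations equals summing all right-diagonal equations (each gives $\sum_{(i,j)} x_{(i,j)} = 0$), which already accounts for three dependencies among the $4n$ line equations; the corner conditions \ref{nessuf_4} and the extreme diagonals (length-one diagonals forcing corner entries to zero) overlap, accounting for the rest. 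Carrying this bookkeeping through carefully to land on exactly nine independent relations is the delicate step. An alternative, perhaps cleaner, route is a dimension count via a direct argument that any $X$ satisfying \ref{nessuf_1}--\ref{nessuf_4} is determined by its values on an $(n-3)\times(n-3)$ sub-board: use the corner conditions together with the short diagonals and the boundary rows/columns to propagate values inward, showing the $2(n-2)+2(n-3)+\cdots$ boundary-type constraints pin down all entries outside an interior $(n-3)\times(n-3)$ region in terms of the interior ones, and that conversely the interior can be chosen freely — but this requires verifying consistency of the propagation, which is where most of the effort will go. Either way, once both inequalities are established, $m(-4)=(n-3)^2$ and $\mathcal{F}_n$, being an independent set of the right cardinality inside the eigenspace, is a basis.
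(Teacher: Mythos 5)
Your first two steps are essentially the paper's own argument and are sound: membership of each $X_n^{(a,b)}$ in $\mathcal{E}_{\mathcal{Q}(n)}(-4)$ is checked via Corollary~\ref{non-main_condition1} by reducing line sums over the $n\times n$ board to line sums of $X_4$ (including the short diagonals), and independence follows from a leading-entry argument. One caution on the latter: your first choice of distinguished entry, $(a+1,b)$, does not work, because the lexicographically later vector $X_n^{(a+1,b-1)}$ takes the value $\big[X_4\big]_{(1,2)}=1$ at that cell; your ``cleaner'' fallback, using $(a,b+1)$ as the topmost-leftmost nonzero cell, is exactly the paper's choice and does work (later blocks either miss that cell or, for $(a',b')=(a,b+1)$, contribute $\big[X_4\big]_{(1,1)}=0$).

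The genuine gap is the upper bound $\dim\mathcal{E}_{\mathcal{Q}(n)}(-4)\le(n-3)^2$, which you correctly identify as the main obstacle but leave as a plan rather than a proof. The paper carries out precisely your second route: it shows that any $X$ satisfying \ref{nessuf_1}--\ref{nessuf_4} is completely determined by the $(n-3)^2$ entries $x_{(i,j+1)}$ with $(i,j)\in[n-3]^2$ (rows $1,\dots,n-3$ and columns $2,\dots,n-2$ --- a corner region, not an interior one), by explicit propagation formulas: first the entries of column $1$ using the antidiagonal sums together with $x_{(1,1)}=x_{(n,1)}=0$, then columns $n-1$ and $n$ using $x_{(1,n)}=x_{(n,n)}=0$, and finally rows $n-2$, $n-1$, $n$. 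Without exhibiting such formulas (or an equivalent rank computation) the multiplicity statement is not established; your proposal proves only $m(-4)\ge(n-3)^2$. Two smaller points: you do not need to ``verify that the interior can be chosen freely'' --- injectivity of the restriction of the eigenspace to the distinguished $(n-3)^2$ coordinates already gives the upper bound, and freeness then follows from the lower bound you have; and in your first route the bookkeeping is off, since conditions \ref{nessuf_1}--\ref{nessuf_4} comprise $6n-2$ equations, so rank $6n-9$ corresponds to exactly seven dependencies, not nine, and verifying that there are no further dependencies is essentially the same work as the propagation argument.
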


\begin{proof}
First, note that every element of $\mathcal{F}_n$ belongs to $\mathcal{E}_{\mathcal{Q}(n)} (-4)$. Indeed,
if $X = \left( x_{(i,j)} \right) \in \mathcal{F}_n$, then the conditions \ref{nessuf_1}-\ref{nessuf_4} of
Theorem~\ref{nessufcond} hold and hence by Corollary~\ref{non-main_condition1} $X \in \mathcal{E}_{\mathcal{Q}(n)} (-4)$.

Second, $\mathcal{F}_n$ is linearly independent and so $\dim \mathcal{E}_{\mathcal{Q}(n)} (-4) \geq (n-3)^2$. For otherwise there would be scalars $\alpha_{1,1}, \ldots, \alpha_{n-3,n-3} \in \mathbb{R}$, not all equal to zero, such that
\begin{equation} \label{comblin}
\alpha_{1,1} X_n^{(1,1)} + \cdots + \alpha_{n-3,n-3} X_n^{(n-3,n-3)} = \mathbf{0}.
\end{equation}

Let $(n-3)(a-1)+b$ be the smallest integer such that $\alpha_{a,b} \neq 0$. Since by \eqref{ev_components} $\big[X_n^{(a,b)}\big]_{(a,b+1)} = \big[X_4\big]_{(1,2)}=1$, the entry $(a,b+1)$ of $\alpha_{a,b}\big[X_n^{(a,b)}\big]$ is $\alpha_{a,b}$.  Consider any other vector $\big[X_n^{(a',b')}\big]$ such that $(n-3)(a'-1)+b'>(n-3)(a-1)+b$ which implies (i) $a'>a$ or (ii) $a'=a$ and $b'>b$.
Denoting $A'=\{a', \dots, a'+3\}$ and $B'=\{b', \dots, b'+3\},$ taking in to account \eqref{ev_components}, we may conclude the following.
\begin{itemize}
\item[(i)] $a'>a$ implies $(a,b+1) \not \in A' \times B'$ and thus $\big[X_n^{(a',b')}\big]_{(a,b+1)} =0$.
\item[(ii)] For $a'=a$ and $b'>b+1$ the conclusion is the same as above. Assuming $a'=a$ and $b'=b+1$  it follows that
$\big[X_n^{(a',b')}\big]_{(a,b+1)}=\big[X_4\big]_{(1,1)}=0$.
\end{itemize}
Therefore, entry $(a,b+1)$ of the left-hand side of (\ref{comblin}) is $\alpha_{a,b} \neq 0$ while the same entry on the right-hand side
of (\ref{comblin}) is 0, which is a contradiction.

Finally, we show that $\dim (\mathcal{E}_{\mathcal{Q}(n)} (-4)) \le (n-3)^2$ by showing that every element of the
subspace generated by $\mathcal{F}_n$ is completely determined by entries $x_{(i,j+1)}$ such that $(i,j) \in [n-3]^2$.

Let $S \subseteq [n]^2$ be the set of indexes $(p,q) \in [n]^2$ such that the entry $x_{(p,q)}$ of
$X \in \mathcal{E}_{\mathcal{Q}(n)} (-4)$ is completely determined by the entries $x_{(i,j+1)}$, with
$(i,j) \in [n-3]^2$. Clearly, $[n-3] \times ([n-2] \setminus \{ 1 \}) \subseteq S$. Since $x_{(1,1)} = x_{(n,1)} = 0$, it follows that
\begin{eqnarray*}
x_{(i,1)} &=& - \sum_{k=2}^i x_{(i+1-k,k)}, \ \text{for every $2 \leq i \leq n-2$,}\\
x_{(n-1,1)} &=& - \sum_{k=2}^{n-2} x_{(k,1)} \\
            &=& \hphantom{+} x_{(1,2)} \\
            & & + x_{(2,2)} + x_{(1,3)} \\
            & & \vdots \\
            & & + x_{(n-3,2)} + \cdots + x_{(2,n-3)} + x_{(1,n-2)} \\
            &=& \sum_{\substack{i,j \geq 1 \\ i+j \leq n-2}} x_{(i,j+1)}
\end{eqnarray*}
and then $[n] \times \{ 1 \} \subseteq S$. Additionally, since $x_{(1,n)} = x_{(n,n)} = 0$ it follows that
\begin{align*}
    & x_{(i,n-1)} = - \sum_{j=1}^{n-2} x_{(i,j)} - x_{(i,n)}, \ \text{for every $1 \leq i \leq n-3$,} \\
    & x_{(i+1,n)} = - \sum_{k=1}^{i} x_{(k,n-1-i+k)}, \ \text{for every $1 \leq i \leq n-3$,} \\
    & x_{(n-1,n)} = - \sum_{i=2}^{n-2} x_{(i,n)}, \quad  x_{(n-2,n-1)} = - \sum_{k=1}^{n-3} x_{(k,k+1)} - x_{(n-1,n)}, \\
    & x_{(n,n-1)} = - x_{(n-1,n)}, \quad x_{(n-1,n-1)} = - \sum_{i=2}^{n-2} x_{(i,n-1)} - x_{n,n-1}
\end{align*}

and then $[n] \times \{ n-1, n \} \subseteq S$. Finally, since for every $2 \le j \le n-2$
\begin{align*}
    & x_{(n,j)} = - \sum_{k=j}^{n-1} x_{(k,n+j-k)}, \\
    & x_{(n-2,j)} = - \sum_{k=1}^{j-1} x_{(n-2-j+k,k)} - x_{(n-1,j+1)} - x_{(n,j+2)},\\
    & x_{(n-1,j)} = - \sum_{i=1}^{n-2} x_{(i,j)} - x_{(n,j)},
\end{align*}
then $\{ n-2, n-1, n \} \times ([n-2] \setminus \{ 1 \} ) \subseteq S$.
\end{proof}

\begin{definition}\label{eigenvectors_n-4}
Given $n \in \mathbb{N}$, $i,j \in [n]$, $k \in \{ 2, \ldots, 2n \}$ and $\ell \in \{ -n+1, \ldots, n-1 \}$, let the row vector $R_i$, the column vector $C_j$, the sum vector $S_k$ and the difference vector $D_\ell$ be the vectors of $\mathbb{R}^{n \times n}$ defined as follows.
\begin{equation}
\begin{split}
R_i(x,y) & = \begin{cases}
              1, & \text{if } x=i \\
              0, & \text{otherwise.}
             \end{cases} \qquad C_j(x,y) = \begin{cases}
                                            1, & \text{if } y=j\\
                                            0, & \text{otherwise.}
                                           \end{cases} \\
S_k(x,y) & = \begin{cases}
              1, &\text{if } x+y=k\\
              0, &\text{otherwise.}
             \end{cases} \qquad  D_\ell(x,y) = \begin{cases}
                                             1, & \text{if } x-y=\ell\\
                                             0, & \text{otherwise.}
                                            \end{cases}.
\end{split}
\end{equation}
Furthermore, $\begin{cases}
                         Z^n = & \{D_0-S_{n+1}\} \text{ and } \\
                         Y^n = & \{Y_i^n = C_i+C_{n+1-i}-R_i-R_{n+1-i} \mid i \in \lceil \frac{n}{2}\rceil\}.
                         \end{cases}$
\end{definition}

Some examples for $n=3$ are the following.

\begin{table}[htbp!]
\centering
\begin{tabular}{|c|c|c|}\hline
\cellcolor{black}{\color{white}0}&0&\cellcolor{black}{\color{white}0}\\
0&\cellcolor{black}{\color{white}0}&0 \\
\cellcolor{black}{\color{white}1}&1&\cellcolor{black}{\color{white}1} \\ \hline
\end{tabular} \;\; \begin{tabular}{|c|c|c|}\hline
                   \cellcolor{black}{\color{white}0}&1&\cellcolor{black}{\color{white}0}\\
                    0&\cellcolor{black}{\color{white}1}&0 \\
                   \cellcolor{black}{\color{white}0}&1&\cellcolor{black}{\color{white}0} \\ \hline
                   \end{tabular} \;\; \begin{tabular}{|c|c|c|} \hline
                                      \cellcolor{black}{\color{white}0}&1&\cellcolor{black}{\color{white}0}\\
                                       1&\cellcolor{black}{\color{white}0}&0 \\
                                      \cellcolor{black}{\color{white}0}&0&\cellcolor{black}{\color{white}0} \\ \hline
                                      \end{tabular} \;\; \begin{tabular}{|c|c|c|}\hline
                                                         \cellcolor{black}{\color{white}1}&0&\cellcolor{black}{\color{white}0}\\
                                                          0&\cellcolor{black}{\color{white}1}&0 \\
                                                         \cellcolor{black}{\color{white}0}&0&\cellcolor{black}{\color{white}1} \\ \hline
                                                         \end{tabular}
\caption{$R_3$, $C_2$, $S_3$ and $D_0$.}
\end{table}

\begin{theorem}
\justifying
For $n \ge 3$, $n-4$ is an eigenvalue of $\mathcal{Q}(n)$ with multiplicity at least $\left\{\begin{array}{ll}
                                                                                          \frac{n-2}{2}, & \hbox{ if $n$ even};\\
                                                                                          \frac{n+1}{2}, & \hbox{otherwise.}
                                                                                         \end{array}\right.$
Furthermore, $Y^n$ and $Z^n$ are sets of linear independent eigenvectors of
$\mathcal{E}_{\mathcal{Q}(n)}\left(n-4\right)$.
\end{theorem}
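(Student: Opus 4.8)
The plan is to regard vectors as $n\times n$ arrays and to exploit that each cell $(i,j)$ of $\mathcal{T}_n$ lies on exactly one row, one column, one descending diagonal (where $p-q$ is constant) and one ascending diagonal (where $p+q$ is constant). Writing $\mathbf{A}$ for the adjacency matrix of $\mathcal{Q}(n)$ and $J$ for the all-ones array, this gives
\[
(\mathbf{A}X)_{(i,j)} = \sum_{q}x_{(i,q)} + \sum_{p}x_{(p,j)} + \sum_{p-q=i-j}x_{(p,q)} + \sum_{p+q=i+j}x_{(p,q)} - 4\,x_{(i,j)},
\]
i.e. $\mathbf{A} = \mathcal{R}+\mathcal{C}+\mathcal{D}+\mathcal{S}-4\,\mathbf{I}$, where $\mathcal{R},\mathcal{C},\mathcal{D},\mathcal{S}$ replace each entry of $X$ by the sum of the entries on its row, column, descending diagonal and ascending diagonal. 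The first step is to evaluate $\mathbf{A}$ on the building blocks of Definition~\ref{eigenvectors_n-4}. A direct count of which diagonals a given line meets yields
\[
\mathbf{A}R_i = (n-4)R_i + J + \sum_{\ell=i-n}^{i-1}D_\ell + \sum_{k=i+1}^{i+n}S_k, \qquad \mathbf{A}C_j = (n-4)C_j + J + \sum_{\ell=1-j}^{n-j}D_\ell + \sum_{k=j+1}^{j+n}S_k,
\]
together with $\mathbf{A}D_0 = (n-4)D_0 + 2J + \Sigma_{+}$ and $\mathbf{A}S_{n+1} = (n-4)S_{n+1} + 2J + \Sigma'$, where $\Sigma_{+}$ is the indicator array of $\{(p,q):p+q \text{ even}\}$ and $\Sigma'$ is the indicator array of $\{(p,q):p-q\equiv n+1\pmod{2}\}$.

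Second, I would substitute these identities into the candidate eigenvectors. In $Y_i^n = C_i+C_{n+1-i}-R_i-R_{n+1-i}$ the four copies of $J$ cancel; moreover column $i$ and row $n+1-i$ meet exactly the same descending diagonals, column $n+1-i$ and row $i$ meet the same descending diagonals, and column $i$ with row $i$ (resp. column $n+1-i$ with row $n+1-i$) meet the same ascending diagonals, so with the signs present in $Y_i^n$ every $D_\ell$- and $S_k$-term cancels as well; hence $\mathbf{A}Y_i^n = (n-4)Y_i^n$, i.e. $Y_i^n \in \mathcal{E}_{\mathcal{Q}(n)}(n-4)$. For $Z^n$ one obtains $\mathbf{A}(D_0-S_{n+1}) = (n-4)(D_0-S_{n+1}) + (\Sigma_{+}-\Sigma')$, and when $n$ is odd (so $n+1$ is even) the index sets of $\Sigma_{+}$ and $\Sigma'$ coincide, both being $\{(p,q):p+q \text{ even}\}$; thus $\Sigma_{+}-\Sigma' = \mathbf{0}$ and $D_0-S_{n+1}\in\mathcal{E}_{\mathcal{Q}(n)}(n-4)$. (For even $n$ the stated bound will be obtained from $Y^n$ alone.)

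Finally, I would carry out the dimension count. The $2n$ arrays $R_1,\dots,R_n,C_1,\dots,C_n$ span a subspace of dimension $2n-1$, the only relation being $\sum_iR_i = \sum_jC_j = J$, since $\sum a_iR_i+\sum b_jC_j = \mathbf{0}$ forces $a_i+b_j = 0$ for all $i,j$. Inside this subspace the arrays $Y_1^n,\dots,Y_{\lceil n/2\rceil}^n$ satisfy exactly one linear relation: $\sum_iY_i^n = \mathbf{0}$ when $n$ is even, while $\sum_iY_i^n = C_{(n+1)/2}-R_{(n+1)/2}\ne\mathbf{0}$ when $n$ is odd, where the middle term $Y_{(n+1)/2}^n = 2(C_{(n+1)/2}-R_{(n+1)/2})$ supplies the single relation $\sum_{i<\lceil n/2\rceil}Y_i^n + \tfrac12 Y_{\lceil n/2\rceil}^n = \mathbf{0}$; hence $\dim\langle Y^n\rangle = \lceil n/2\rceil-1$, which is $(n-2)/2$ for even $n$ and $(n-1)/2$ for odd $n$. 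For odd $n$, comparing the entries of $D_0-S_{n+1}$ on and off the main diagonal and the anti-diagonal shows it is not a linear combination of the $R_i$ and $C_j$ (an array of the form $(a_i+b_j)_{(i,j)}$ that vanishes off both diagonals forces $a$ and $b$ constant, contradicting the value $+1$ on the main diagonal), so $D_0-S_{n+1}\notin\langle Y^n\rangle$ and $\dim\langle Y^n\cup Z^n\rangle = (n+1)/2$. In both parities this gives $m(n-4)\ge\dim\langle Y^n\cup Z^n\rangle$ equal to the asserted bound. The step I expect to be most delicate is the bookkeeping: getting the index ranges of the diagonal sums exactly right so the cancellations in $\mathbf{A}Y_i^n$ and $\mathbf{A}(D_0-S_{n+1})$ are complete, and correctly identifying the unique linear relation among the $Y_i^n$ in each parity class.
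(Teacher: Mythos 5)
Your proposal is correct and reaches the theorem by the same overall route as the paper---verify that the $Y_i^n$ (and, for odd $n$, $Z^n=\{D_0-S_{n+1}\}$) lie in $\mathcal{E}_{\mathcal{Q}(n)}(n-4)$, then bound the dimension of their span---but both steps are organised differently. For membership, the paper works entrywise: it first shows by symmetry that every diagonal and antidiagonal sum of $Y_k^n$ (and every row and column sum of $D_0-S_{n+1}$) vanishes, and then does a case analysis on the position $(i,j)$; you instead evaluate $\mathbf{A}=\mathcal{R}+\mathcal{C}+\mathcal{D}+\mathcal{S}-4\mathbf{I}$ on the elementary arrays $R_i,C_j,D_0,S_{n+1}$ and cancel the $J$-, $D_\ell$- and $S_k$-terms wholesale; your index ranges are right, the pairings (column $i$ with row $n+1-i$ for descending diagonals, column $i$ with row $i$ for ascending ones) are exactly the needed cancellations, and the parity identity $\Sigma_+=\Sigma'$ for odd $n$ correctly isolates why $Z^n$ fails for even $n$. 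For the dimension count, the paper exhibits the dependency $Y_1^n+\cdots+Y_{(n-1)/2}^n+\tfrac12Y_{(n+1)/2}^n=\mathbf{0}$ and proves independence of $\{Y_1^n,\dots,Y_{(n-1)/2}^n,Z^n_0\}$ by inspecting particular entries (a smallest-index argument); you instead use that $\dim\langle R_1,\dots,R_n,C_1,\dots,C_n\rangle=2n-1$ with the single relation $\sum_iR_i=\sum_jC_j$, which pins down the unique relation among the $Y_i^n$ in each parity and yields the same bounds, with the added benefit of giving the exact dimension of the span. One small caveat: your closing lemma, that an array of the form $(a_i+b_j)_{(i,j)}$ vanishing off both diagonals forces $a$ and $b$ constant, is only valid for $n\ge 5$; for $n=3$ it fails (e.g.\ $a=(0,1,0)$, $b=(-1,0,-1)$), so the independence of $Z^3$ from $\langle Y^3\rangle$ needs a direct check---which does succeed, since writing $D_0-S_4=(a_i+b_j)$ forces simultaneously $a_1-a_2=1$ (from the $(1,1)$ entry) and $a_2-a_1=0$ (from the $(2,2)$ entry), a contradiction.
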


\begin{proof}
In what follows we consider only an odd integer $n \ge 3$. The $n$ even case is similar with the exception that $Z^n_0 \not \in \mathcal{E}_{\mathcal{Q}(n)}(n-4)$ when $n$ is even.

First we note that $Y_k^n \in \mathcal{E}_{\mathcal{Q}(n)} \left(n-4\right)$ for every $k \in [\frac{n-1}{2}]$
and $Z^n_0 \in \mathcal{E}_{\mathcal{Q}(n)}(n-4)$. Since

\[
C_k (p,q) = R_k (q,p) \ \text{and} \ C_{n+1-k} (p,q) = R_{n+1-k} (q,p),
\]
we have
\[
\sum_{p+q=i+j} (Y_k^n)_{(p,q)} = 0.
\]
Similarly, since
\[
C_k (p,q) = R_{n+1-k} (n+1-q,n+1-p) \ \text{and} \ C_{n+1-k} (p,q) = R_k (n+1-q,n+1-p),
\]
we have
\[
\sum_{p-q=i-j} (Y_k^n)_{(p,q)} = 0.
\]

Hence
\begin{align*}
(A_{\mathcal{Q}(n)} Y_k^n)_{(i,j)} & = \sum_{(p,q) \in N(i,j)} (Y_k^n)_{(p,q)} \\
& = \sum_{p=i} (Y_k^n)_{(p,q)} + \sum_{q=j} (Y_k^n)_{(p,q)} + \\
& + \sum_{p+q=i+j} (Y_k^n)_{(p,q)} + \sum_{p-q=i-j} (Y_k^n)_{(p,q)} - 4 (Y_k^n)_{(i,j)} \\
& = \sum_{p=i} (Y_k^n)_{(p,q)} + \sum_{q=j} (Y_k^n)_{(p,q)} - 4 (Y_k^n)_{(i,j)} \\
& =
\begin{cases}
-(n-2)+(n-2)-0=0 & \text{if $i,j \in \{ k, n+1-k \}$}, \\
-(n-2)+2=-(n-4) & \text{if $i \in \{ k, n+1-k \}$ but $j \notin \{ k, n+1-k \}$}, \\
-2+(n-2)=n-4 & \text{if $j \in \{ k, n+1-k \}$ but $i \notin \{ k, n+1-k \}$}, \\
-2+2-0=0 & \text{if $i,j \notin \{ k, n+1-k \}$}
\end{cases} \\
& = (n-4) (Y_k^n)_{(p,q)}.
\end{align*}

Similarly,
\[
\sum_{p=i} (Z^n_0)_{(p,q)} = 0 = \sum_{q=j} (Z^n_0)_{(p,q)},
\]
and so
\begin{align*}
(A_{\mathcal{Q}(n)} Z^n_0)_{(i,j)} & = \sum_{(p,q) \in N(i,j)} (Z^n_0)_{(p,q)} \\
& = \sum_{p=i} (Z^n_0)_{(p,q)} + \sum_{q=j} (Z^n_0)_{(p,q)} + \\
& + \sum_{p+q=i+j} (Z^n_0)_{(p,q)} + \sum_{p-q=i-j} (Z^n_0)_{(p,q)} - 4 (Y_k^n)_{(i,j)}\\
& = \sum_{p+q=i+j} (Z^n_0)_{(p,q)} + \sum_{p-q=i-j} (Z^n_0)_{(p,q)} - 4 (Y_k^n)_{(i,j)}\\
& =
\begin{cases}
(n-1)-(n-1)-0=0 & \text{if $i = j = \frac{n+1}{2}$}, \\
1+(n-1)-4=n-4 & \text{if $i = j \neq \frac{n+1}{2}$}, \\
-(n-1)-1+4=-(n-4) & \text{if $i = n+1-j \neq \frac{n+1}{2}$}, \\
0+0-0=0 & \text{if $i \neq j$, $i+j \neq n+1$, and $i+j$ is odd}, \\
1-1-0=0 & \text{if $i \neq j$, $i+j \neq n+1$, and $i+j$ is even}
\end{cases} \\
& = (n-4) (Z^n_0)_{(p,q)}.
\end{align*}

Note that $\left\{ Y_1^n, Y_2^n, \ldots, Y_{\frac{n+1}{2}}^n \right\}$ is linearly dependent because
\[
Y_1^n + Y_2^n + \cdots + Y_{\frac{n-1}{2}}^n + \frac{1}{2} Y_{\frac{n+1}{2}}^n = 0.
\]

However $\left\{ Y_1^n, Y_2^n, \ldots, Y_{\frac{n-1}{2}}^n, Z^n_0 \right\}$ is linearly independent. For otherwise there would be $\alpha_1, \ldots, \alpha_{\frac{n-1}{2}}, \beta$ not simultaneously 0 such that
\[
L := \alpha_1 Y_1^n + \alpha_2 Y_2^n + \cdots + \alpha_{\frac{n-1}{2}} Y_{\frac{n-1}{2}}^n + \beta Z^n_0 = 0.
\]
Then $L_{(1,1)} = \beta = 0$ and so
\[
\alpha_1 Y_1^n + \alpha_2 Y_2^n + \cdots + \alpha_{\frac{n-1}{2}} Y_{\frac{n-1}{2}}^n = 0.
\]

Let $k \in [\frac{n-1}{2}]$ be the smallest integer such that $\alpha_k \neq 0$.
Then $L_{\left(\frac{n-1}{2},k\right)} = \alpha_k = 0$ contradicting our assumption.

Hence, $\dim \mathcal{E}_{\mathcal{Q}(n)} \geq \frac{n+1}{2}$.
\end{proof}

An interesting open problem related with the spectrum of $\mathcal{Q}(n)$ is the characterization of its integer eigenvalues. The eigenvalues of $\mathcal{Q}(n)$, computed for several values of $n$, suggest the following
conjecture.

\begin{conjecture}
For $n \ge 4$, the set of integer eigenvalues of the $n$-Queens' graph $\mathcal{Q}(n)$ is
\[
\sigma(\mathcal{Q}(n)) \cap \mathbb{Z} =
\begin{cases}
\left\{ -4, n-4 \right\}, & \text{if $n$ is even;} \\
\left\{ -4, -3, \ldots, \frac{n-11}{2} \right\} \cup \left\{ \frac{n-5}{2}, \ldots, n-5, n-4 \right\}, & \text{if $n$ is odd.}
\end{cases}
\]
Furthermore, when $n$ is even the eigenvalue $n-4$ has multiplicity $(n-2)/2$, and when $n$ is odd the eigenvalue $n-4$ has multiplicity $(n+1)/2$ and the eigenvalues $-3, -2, \dots, \frac{n-11}{2}, \frac{n-5}{2}, \dots, n-6, n-5$ are simple.
\end{conjecture}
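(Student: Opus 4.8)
\emph{A proof proposal.} The conjecture bundles three assertions: that $-4$ has multiplicity exactly $(n-3)^2$; that $n-4$ has multiplicity exactly $(n-2)/2$ for even $n$ and $(n+1)/2$ for odd $n$; and, for odd $n$, that the listed integers all occur as eigenvalues (simple apart from $-4$ and $n-4$) while no further integer does. The first assertion is already Theorem~\ref{th-4eigenvalue}, and the matching lower bounds for $m(n-4)$ together with some of the remaining integer eigenvalues have been produced explicitly; so what a proof must add is (i) a matching \emph{upper} bound for $m(n-4)$, (ii) explicit eigenvectors realising the integers $\{-3,\dots,\tfrac{n-11}{2}\}\cup\{\tfrac{n-5}{2},\dots,n-5\}$ for \emph{every} odd $n$, and (iii) the exclusion of any other integer from the spectrum. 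The plan is to use the symmetry of the board to split $A_{\mathcal{Q}(n)}$ into manageable pieces, to handle (i) and (ii) inside those pieces, and to reduce (iii) to an arithmetic statement about a few explicit polynomials.

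The structural engine is the dihedral group $D_4$ of rotations and reflections of the square: each of its eight elements is an automorphism of $\mathcal{Q}(n)$ (transposition swaps rows with columns and preserves each diagonal family; the quarter turn swaps the two diagonal directions), so $A_{\mathcal{Q}(n)}$ commutes with the associated permutation representation of $D_4$ and block-diagonalises along its isotypic decomposition of $\mathbb{R}^{n^2}$ (four one-dimensional characters and one two-dimensional character). The known eigenvectors fit this picture: the vectors $X_n^{(a,b)}$ spanning $\mathcal{E}_{\mathcal{Q}(n)}(-4)$ are built from a $4\times4$ pattern that is annihilated by every line-sum and is antisymmetric under the local $D_4$, hence they live in a ``non-main'' block invisible to the equitable partition of Section~\ref{sec4}; the vectors $Y_k^n$ of Definition~\ref{eigenvectors_n-4} are antisymmetric under transposition and $Z^n_0$ is antisymmetric under the quarter turn, so they sit in the sign-character blocks. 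The programme for (ii) is to guess, in each character block, the analogous eigenvectors as combinations of the indicator vectors $R_i,C_j,S_k,D_\ell$ chosen so that the diagonal line-sums vanish by the same reflection identities used for $Y_k^n$; then $A_{\mathcal{Q}(n)}$ acts on the span of such combinations through a structured, explicitly analysable matrix (built from the row/column line-sums alone), whose characteristic polynomial, written in closed form in $n$, should display exactly the conjectured integer roots — and, one hopes, make transparent the curious gap at $\tfrac{n-9}{2},\tfrac{n-7}{2}$.

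For (i) I would imitate the corank bookkeeping from the proof of Theorem~\ref{th-4eigenvalue}. Using the row/column/diagonal ECP $P$ of Theorem~\ref{nessufcond} we have $A_{\mathcal{Q}(n)}=\sum_i J_{V_i}-\mathrm{diag}(m_v)$, hence $\mathcal{E}_{\mathcal{Q}(n)}(n-4)=\ker\!\bigl(\sum_i J_{V_i}-\mathrm{diag}(m_v+n-4)\bigr)$, which says that for each vertex the sum of the four (three, at a corner) line-sums through it equals $n\,x_v$ (resp. $(n-1)x_v$). Solving this system line by line should force every entry of such a null vector to be an explicit linear function of a bounded family of free parameters, giving the upper bound on $m(n-4)$; alternatively, Cauchy interlacing against the equitable quotient of Section~\ref{sec4}, combined with the exact value $m(-4)=(n-3)^2$ and the eigenvectors produced in step (ii), pins down all the remaining multiplicities at once. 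Either route is routine in principle, if laborious.

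The real obstacle, and the reason this is posed only as a conjecture, is (iii): proving that after the explicitly understood eigenspaces are removed, the characteristic polynomial of each $D_4$-block (equivalently, of the equitable quotient together with its non-main complement) has no integer root, uniformly in $n$. Without a closed-form factorisation of $\det(xI-A_{\mathcal{Q}(n)})$ — which would essentially amount to knowing the whole spectrum — one is thrown back on arithmetic: the rational-root theorem applied to the monic integer characteristic polynomials of those blocks, tightened by the combinatorially computable moments $\tr(A_{\mathcal{Q}(n)}^k)$ (closed-walk counts in $\mathcal{Q}(n)$) to rule out the finitely many surviving candidates. Turning such an argument into a statement valid for all $n$, rather than the case-by-case verification recorded in Table~\ref{computed_igenvalues}, is where I expect the genuine difficulty to lie.
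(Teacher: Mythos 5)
The statement you are addressing is posed in the paper only as a conjecture: the authors give no proof at all, just the remark that computations confirm it for $n\le 100$ (cf.\ Table~\ref{computed_igenvalues}). So there is no argument of theirs to compare with, and the relevant question is whether your proposal closes the gap. It does not, and you say so yourself: it is a programme, and each of the three steps you isolate is left unexecuted. For (i), no upper bound on $m(n-4)$ is actually derived; ``solving the system line by line'' is asserted to be routine but is not done, and the alternative of interlacing against the equitable quotient of Section~\ref{sec4} cannot by itself pin down the multiplicity of a single eigenvalue, since that quotient has on the order of $n^2/8$ cells and interlacing only constrains eigenvalues of the quotient versus the graph, not multiplicities in the complement. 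For (ii), no eigenvector is exhibited for any of the conjectured simple eigenvalues $-3,\dots,\frac{n-11}{2},\frac{n-5}{2},\dots,n-5$; the hope that they arise from combinations of the indicator vectors $R_i,C_j,S_k,D_\ell$ of Definition~\ref{eigenvectors_n-4} is unsupported, and in fact that span is not $A$-invariant (applying $A_{\mathcal{Q}(n)}$ to a single $R_i$ produces boundary corrections coming from diagonals that leave the board, which are not linear combinations of line indicators), so the ``structured, explicitly analysable matrix'' you invoke does not exist without substantial further construction. The $D_4$-isotypic decomposition is correct and a sensible organising device, but its blocks still have dimension growing like $n^2$, so it does not by itself reduce anything to a closed-form computation.

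The most serious difficulty is the one you flag as (iii), and your proposed remedy would not work as stated: after removing the $(-4)$- and $(n-4)$-eigenspaces the residual characteristic polynomial has degree on the order of $n^2$ and its roots fill an interval of length roughly $5n$, so the rational root theorem leaves a number of integer candidates that grows with $n$, and finitely many trace moments $\mathrm{tr}(A_{\mathcal{Q}(n)}^k)$ — which are dominated by the non-integer part of the spectrum — cannot exclude individual integer roots uniformly in $n$. Nothing in the proposal explains, even heuristically, the conjectured gap at $\frac{n-9}{2},\frac{n-7}{2}$ or why exactly the listed integers should occur for every odd $n$. In short: your decomposition and bookkeeping ideas are reasonable starting points and consistent with what the paper does prove (Theorem~\ref{th-4eigenvalue} for $m(-4)=(n-3)^2$ and the lower bounds on $m(n-4)$ via $Y^n$ and $Z^n$), but the statement remains a conjecture, and the proposal does not supply the missing upper bounds, the missing eigenvectors, or the missing exclusion argument.
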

From the computations we may say that this conjecture is true for $n \le 100$.  If this conjecture is true, then $\mathcal{Q}(n)$ has two distinct integer eigenvalues when $n$ is even and $n-1$ distinct integer eigenvalues when $n$ is odd.

\section{Equitable partitions}\label{sec4}
The eigenvalues $\mu$ of a graph $G$ which have an associated eigenspace ${\mathcal E}_G(\mu)$ not orthogonal to the all-one $n \times 1$ vector $\mathbf{j}$ are said to be {\it main}. The remaining eigenvalues are referred as {\it non-main}. The concept of main (non-main) eigenvalue was introduced in \cite{cds79} and further investigated in many papers since then. A recent overview was published in \cite{rowmain}. As consequence, from the proof of Theorem~\ref{th-4eigenvalue} and the definition of the family of eigenvectors $\mathcal{F}_n$ we may conclude that $-4$ is a non-main eigenvalue. The main eigenvalues of a graph $G$ are strictly related with the concept of \text{equitable partition} of the vertex set of $G$ and with the corresponding \textit{divisor (or quociente) matrix} $B$. The main eigenvalues of $G$ are also eigenvalues of
the divisor matrix of any equitable partition of $G$.

\begin{definition}[Equitable partition]
Given a graph $G$, the partition $V(G) = V_1 \dot{\cup} V_2 \dot{\cup} \dots \dot{\cup} V_k$ is an equitable partition if every
vertex in $V_i$ has the same number of neighbors in $V_j$, for all $i,j \in \{1, 2, \dots, k\}$. An equitable partition of $V(G)$
is also called equitable partition of $G$ and the vertex subsets $V_1, V_2, \dots, V_k$ are called the cells of the equitable partition.
\end{definition}
Every graph has a trivial equitable partition in which each cell is a singleton.
\begin{definition}[Divisor (or quotient) matrix]
Consider an equitable partition $\pi$ of $G$, $V(G) = V_1 \dot{\cup} V_2 \dot{\cup} \dots \dot{\cup} V_k$, where each vertex in $V_i$
has $b_{ij}$ neighbors in $V_j$ (for all $i,j \in \{1, 2, \dots, k\}$). The matrix $B_{\pi} = \left(b_{ij}\right)$ is called
the divisor (or quociente) matrix of $\pi$.
\end{definition}
The following results for graphs with equitable partitions are fundamental.
\begin{lemma}\cite{CP84,rowmain}\label{integer_coeficcients}
If $G$ is a graph with $p$ distinct main eigenvalues $\mu_{j_1}, \ldots, \mu_{j_p}$, then the main characteristic polynomial of $G$,
\begin{equation}
M_G(x) = \prod^{p}_{i =1}( x - \mu_{j_i}) = x^{p} - c_1 x^{p-1} - c_2 x^{p-2} - \cdots - c_{p-1}x - c_{p}, \label{mcpG}\\
\end{equation}
has integer coefficients.
\end{lemma}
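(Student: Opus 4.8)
The plan is to recognize $M_G(x)$ in \eqref{mcpG} as the minimal polynomial of the all-ones vector $\mathbf{j}$ relative to the adjacency matrix $\mathbf{A}$ of $G$, and then to extract integrality of its coefficients from a suitable $\mathbf{A}$-invariant sublattice of $\mathbb{Z}^n$. Since $\mathbf{A}$ is real symmetric, it is orthogonally diagonalizable; writing $\mathbf{j}=\sum_i \mathbf{j}_i$ with $\mathbf{j}_i$ the projection of $\mathbf{j}$ onto the eigenspace $\mathcal{E}_G(\mu_i)$, the eigenvalue $\mu_i$ is main exactly when $\mathbf{j}_i\neq\mathbf{0}$. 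A polynomial $q$ satisfies $q(\mathbf{A})\mathbf{j}=\mathbf{0}$ if and only if $q(\mu_i)=0$ for every $i$ with $\mathbf{j}_i\neq\mathbf{0}$ (the nonzero $\mathbf{j}_i$ being mutually orthogonal, hence linearly independent), so the monic generator of the annihilator of $\mathbf{j}$ is precisely $\prod_{i:\,\mathbf{j}_i\neq\mathbf{0}}(x-\mu_i)=M_G(x)$; in particular $\deg M_G=p$. Equivalently, the Krylov subspace $\mathcal{K}=\langle \mathbf{j},\mathbf{A}\mathbf{j},\dots,\mathbf{A}^{p-1}\mathbf{j}\rangle$ equals $\operatorname{span}\{\mathbf{j}_i:\mathbf{j}_i\neq\mathbf{0}\}$, has dimension $p$, is $\mathbf{A}$-invariant, and the restriction $\mathbf{A}|_{\mathcal{K}}$ has characteristic polynomial $M_G(x)$ (it is diagonalizable with exactly the $p$ distinct main eigenvalues on a $p$-dimensional space).

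For the integrality I would introduce the lattice $\Lambda=\sum_{k\geq 0}\mathbb{Z}\,\mathbf{A}^k\mathbf{j}$, which is contained in $\mathbb{Z}^n$ because $\mathbf{A}$ and $\mathbf{j}$ are integral. As a subgroup of $\mathbb{Z}^n$ it is free abelian of finite rank; it spans $\mathcal{K}$ over $\mathbb{Q}$ and lies inside $\mathcal{K}$, so its rank is exactly $p$; and $\mathbf{A}\Lambda\subseteq\Lambda$ since $\mathbf{A}(\mathbf{A}^k\mathbf{j})=\mathbf{A}^{k+1}\mathbf{j}$. Choosing a $\mathbb{Z}$-basis of $\Lambda$, the operator $\mathbf{A}|_{\Lambda}$ is represented by an integer $p\times p$ matrix, whose characteristic polynomial is therefore in $\mathbb{Z}[x]$. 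But that characteristic polynomial coincides with the characteristic polynomial of $\mathbf{A}|_{\mathcal{K}}$, namely $M_G(x)$. Hence $M_G(x)\in\mathbb{Z}[x]$, i.e.\ the coefficients $c_1,\dots,c_p$ in \eqref{mcpG} are integers.

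The one point that needs care — the expected obstacle — is exactly this last step: one should not take the first linear dependence among $\mathbf{j},\mathbf{A}\mathbf{j},\mathbf{A}^2\mathbf{j},\dots$ and hope its coefficients are integral, since Cramer's rule only delivers rational coefficients there. The resolution is to pass to a genuine $\mathbb{Z}$-basis of the full lattice $\Lambda$ generated by all iterates (the naive spanning set $\mathbf{j},\dots,\mathbf{A}^{p-1}\mathbf{j}$ need not be a $\mathbb{Z}$-basis) and to observe that $\mathbf{A}$ acts integrally on that basis. A second, more self-contained route in the spirit of this section would be to note that $M_G(x)$ is monic with rational coefficients (solve the dependence over $\mathbb{Q}$) and that its roots are simple and are all eigenvalues of the divisor matrix $B_\pi$ of any equitable partition $\pi$, so $M_G(x)$ divides $\det(xI-B_\pi)\in\mathbb{Z}[x]$; Gauss's lemma then forces $M_G(x)\in\mathbb{Z}[x]$.
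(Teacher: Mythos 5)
Your proof is correct. Note, however, that the paper itself offers no proof of this lemma: it is stated with citations to \cite{CP84,rowmain}, so there is no in-paper argument to compare against. Your first argument is sound in all its steps: $M_G(x)$ is indeed the monic generator of the annihilator of $\mathbf{j}$ under $\mathbf{A}$ (the nonzero projections $\mathbf{j}_i$ are orthogonal, and a Vandermonde argument gives $\dim\mathcal{K}=p$ with $\mathcal{K}=\operatorname{span}\{\mathbf{j}_i\}$), the lattice $\Lambda=\sum_{k\ge 0}\mathbb{Z}\,\mathbf{A}^k\mathbf{j}\subseteq\mathbb{Z}^n$ is free of rank $p$, is $\mathbf{A}$-invariant, a $\mathbb{Z}$-basis of $\Lambda$ is simultaneously an $\mathbb{R}$-basis of $\mathcal{K}$, and so the integer matrix of $\mathbf{A}|_{\Lambda}$ has characteristic polynomial $M_G(x)\in\mathbb{Z}[x]$; you also correctly flag the trap of trying to read integrality off the first $\mathbb{Q}$-linear dependence. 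For comparison, the proof in the cited literature (Rowlinson's survey, going back to Cvetkovi\'c) is essentially your second route: one observes that $M_G(x)$ is monic with rational coefficients and divides a monic integer polynomial --- most directly the characteristic (or minimal) polynomial of $\mathbf{A}$ itself, which avoids any appeal to Theorem~\ref{CRS_Th} and hence is preferable to invoking the divisor matrix $B_\pi$, since that theorem is also only cited --- and then concludes by Gauss's lemma. So your second sketch reproduces the standard argument in its cleanest form (take the trivial partition, i.e.\ divide $\det(xI-\mathbf{A})$), while your lattice argument is a slightly longer but fully self-contained alternative that replaces Gauss's lemma by the integrality of the matrix of $\mathbf{A}$ acting on a $\mathbb{Z}$-basis of $\Lambda$; both are valid, and either would serve as a proof of Lemma~\ref{integer_coeficcients}.
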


\begin{theorem}\cite{CRS10}\label{CRS_Th}
Let $G$ be a graph with adjacency matrix $A$ and let $\pi$ be a partition of $V(G)$ with characteristic matrix $C$.
\begin{enumerate}
\item If $\pi$ is equitable, with divisor matrix $B_{\pi}$, then $AC = CB_{\pi}$. \label{CRS_Th_1}
\item The partition $\pi$ is equitable if and only if the column space of $C$ is $A$-invariant. \label{CRS_Th_2}
\item The characteristic polynomial of any divisor matrix of $G$ divides the characteristic polynomial of $G$ and it is divisible by $M_G(x)$. \label{CRS_Th_3}
\end{enumerate}
\end{theorem}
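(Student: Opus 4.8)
The plan is to argue directly from the characteristic matrix $C$, the $n\times k$ matrix whose $i$-th column $c_i$ is the indicator vector of the cell $V_i$. Since the cells are nonempty and pairwise disjoint, $c_1,\dots,c_k$ have pairwise disjoint supports and are nonzero, so $C$ has full column rank $k$; moreover $\mathbf{j}=c_1+\cdots+c_k$, so the all-one vector lies in the column space $U:=\operatorname{col}(C)$. These two facts (full column rank, $\mathbf{j}\in U$) will carry the whole argument.

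For item~\ref{CRS_Th_1} I would just compare entries: if $v\in V_i$, the $(v,j)$ entry of $AC$ is $\sum_{w\in V_j}A_{vw}$, the number of neighbours of $v$ in $V_j$, which equals $b_{ij}$ by equitability, while the $(v,j)$ entry of $CB_\pi$ is $\sum_\ell C_{v\ell}(B_\pi)_{\ell j}=(B_\pi)_{ij}=b_{ij}$; hence $AC=CB_\pi$. For item~\ref{CRS_Th_2}, one direction is immediate from item~\ref{CRS_Th_1}, since $AC=CB_\pi$ gives $AU=\operatorname{col}(CB_\pi)\subseteq U$. For the converse, if $U$ is $A$-invariant then each $Ac_i\in U$, so there is a matrix $B$ with $AC=CB$, unique because $C$ has full column rank; reading the $(v,j)$ entry for $v\in V_i$ shows that the number of neighbours of $v$ in $V_j$ is $B_{ij}$, a quantity independent of the choice of $v\in V_i$, so $\pi$ is equitable with divisor matrix $B$.

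For item~\ref{CRS_Th_3}, the point is that $AC=CB_\pi$ with $C$ of full column rank says precisely that $B_\pi$ is the matrix of the restriction $A|_U$ in the basis $c_1,\dots,c_k$, so $\det(xI_k-B_\pi)$ is the characteristic polynomial of $A|_U$. Since $A$ is symmetric, $U^\perp$ is also $A$-invariant and $\det(xI_n-A)=\det(xI_k-B_\pi)\,\det(xI_{n-k}-A|_{U^\perp})$, which gives the first divisibility. For divisibility by $M_G(x)$ I would show that each main eigenvalue $\mu$ is an eigenvalue of $B_\pi$: the orthogonal projection $P_\mu$ onto $\mathcal{E}_G(\mu)$ is a polynomial in $A$ (a Lagrange interpolation product over the distinct eigenvalues), so $P_\mu\mathbf{j}\in U$ because $\mathbf{j}\in U$ and $U$ is $A$-invariant, and $P_\mu\mathbf{j}\neq\mathbf{0}$ exactly because $\mu$ is main; writing $P_\mu\mathbf{j}=Cy$ with $y\neq\mathbf{0}$ and combining $A(P_\mu\mathbf{j})=\mu P_\mu\mathbf{j}$ with $AC=CB_\pi$ and the injectivity of $C$ yields $B_\pi y=\mu y$. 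As $M_G(x)=\prod_i(x-\mu_{j_i})$ is squarefree (cf. Lemma~\ref{integer_coeficcients}) and all its roots are eigenvalues of $B_\pi$, it divides $\det(xI_k-B_\pi)$.

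Items~\ref{CRS_Th_1} and~\ref{CRS_Th_2} and the first half of item~\ref{CRS_Th_3} are routine; the step I expect to be the main obstacle is $M_G(x)\mid\det(xI_k-B_\pi)$, which hinges on the twin observations that $\mathbf{j}\in\operatorname{col}(C)$ and that the components of $\mathbf{j}$ along the main eigenspaces are obtained by applying polynomials in $A$, hence never leave the $A$-invariant subspace $U$.
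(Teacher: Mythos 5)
The paper offers no proof of this theorem: it is quoted directly from \cite{CRS10}, so there is no in-text argument to compare against. Your proof is correct and is essentially the standard one from that reference — entrywise comparison of $AC$ and $CB_\pi$ for item \ref{CRS_Th_1}, full column rank of the characteristic matrix for both directions of item \ref{CRS_Th_2}, and, for item \ref{CRS_Th_3}, reading $B_\pi$ as the matrix of $A$ restricted to the $A$-invariant subspace $\operatorname{col}(C)$ (giving the first divisibility by block-triangularization) together with the key observation that for each main eigenvalue $\mu$ the spectral projection $P_\mu\mathbf{j}$ is a nonzero vector of $\operatorname{col}(C)$, since $\mathbf{j}\in\operatorname{col}(C)$ and $P_\mu$ is a polynomial in $A$, whence $\mu$ is an eigenvalue of $B_\pi$ and the squarefree polynomial $M_G(x)$ divides $\det(xI-B_\pi)$.
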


Since the largest eigenvalue of any connected graph $G$ with $n \ge 2$ vertices is a main eigenvalue (see \cite[Th. 03 and 04]{cds79}), an immediate consequence of Theorem~\ref{CRS_Th}-\ref{CRS_Th_3} is that the largest eigenvalue of $G$ is an eigenvalue of any of its divisor matrices.\\

A very interesting property of the $n$-Queens' graphs consists that they admit equitable partitions with
divisor matrices of order $\frac{(\lceil\frac{n}{2} \rceil+1)\lceil \frac{n}{2} \rceil}{2}$, for $n \ge 3$.
These equitable partitions can be obtained applying the following algorithm, where the squares of the chessboard $\mathcal{T}_n$,
corresponding to the vertices of $\mathcal{Q}(n)$, are labeled with the numbers of the cells they belong. Therefore, the squares
belonging to the same cell have the same number.

\begin{itemize}
\item[{\bf Algorithm}]\textbf{1}[for the determination of an equitable partition of $\mathcal{Q}(n)$]
\item[{\bf Require}:] The $n \times n$ chess board, $\mathcal{T}_n$.
\item[{\bf Ensure}:]  The labels of the squares of $\mathcal{T}_n$ which are the numbers of the cells of an equitable partition.
\item[{\bf Part I}:] Labeling procedure.
      \begin{itemize}
      \item[$1$] Assign the the number \textbf{$0$} to the first square (the top left square).
      \item[$2$] Assign the numbers \textbf{$1$} and \textbf{$2$} to the first and second squares of the second column (from the top to bottom), respectively.
      \item[$\;$] $\vdots$\\
      \item[$\lceil \frac{n}{2} \rceil$] Assign the numbers $ \sum_{i=0}^{\lceil \frac{n}{2} \rceil-1}{i}+1, \dots, \frac{(\lceil\frac{n}{2} \rceil+1)\lceil \frac{n}{2} \rceil}{2}-1$
                                         to the first $\lceil \frac{n}{2} \rceil$ squares of the $\lceil \frac{n}{2} \rceil$-th column (from top to bottom).
      \end{itemize}
\item[{\bf Part II}:] Geometric reflection procedure.
      \begin{enumerate}
      \item Reflect the triangle formed by the squares labeled by the above steps, using the vertical cathetus of the triangle as mirror line (after this reflection, we have two right
            triangles sharing the same vertical line).
      \item Reflect both triangles, each one using its hypotenuse as mirror line (after this reflections all the squares in the top $\lceil \frac{n}{2} \rceil$ lines are labeled).
      \item Reflect the rectangle formed by the the upper $\lfloor \frac{n}{2} \rfloor$ lines taking the horizontal middle line of the chessboard as mirror line (after this reflection
            all the squares have labels which are the numbers of their cells).
      \end{enumerate}
\item[$\;$] \hspace{-1.5cm}{\bf End Algorithm}.
\end{itemize}

\begin{example}
Let us apply the Algorithm~1 to the determination of an equitable partition of the vertices of $\mathcal{Q}(6)$, corresponding
to the squares in $\mathcal{T}_6$, the divisor matrix $B$ and its characteristic polynomial.

\begin{table}[h!]
			\centering
{\footnotesize \begin{tabular}{|c|c|c|c|c|c|}\hline
\textbf{0} & \textbf{1} & \textbf{3} & $\;$ & $\;$ & \\ \hline
           & \textbf{2} & \textbf{4} &      &      & \\ \hline
		   &            & \textbf{5} &      &      & \\ \hline
		   &            &            &      &      & \\ \hline
           &            &            &      &      & \\ \hline
           &            &            &      &      & \\ \hline
\end{tabular}
\qquad
\begin{tabular}{|c|c|c|c|c|c|}\hline
\textbf{0} & \textbf{1} & \textbf{3} & $3$ & $1$ & $0$\\ \hline
           & \textbf{2} & \textbf{4} & $4$ & $2$ & \\ \hline
		   &            & \textbf{5} & $5$ &     & \\ \hline
		   &            &            &     &     & \\ \hline
           &            &            &     &     & \\ \hline
           &            &            &     &     & \\ \hline
\end{tabular}
\qquad
\begin{tabular}{|c|c|c|c|c|c|}\hline
\textbf{0} & \textbf{1} & \textbf{3} & $3$ & $1$ & $0$\\ \hline
$1$        & \textbf{2} & \textbf{4} & $4$ & $2$ & $1$\\ \hline
$3$		   & $4$        & \textbf{5} & $5$ & $4$ & $3$\\ \hline
		   &            &            &     &     & \\ \hline
           &            &            &     &     & \\ \hline
           &            &            &     &     & \\ \hline
\end{tabular}

\medskip

\begin{tabular}{|c|c|c|c|c|c|}\hline
\textbf{0} & \textbf{1} & \textbf{3} & $3$ & $1$ & $0$ \\ \hline
$1$        & \textbf{2} & \textbf{4} & $4$ & $2$ & $1$ \\ \hline
$3$        & $4$        & \textbf{5} & $5$ & $4$ & $3$ \\ \hline
$3$        & $4$        & $5$        & $5$ & $4$ & $3$ \\ \hline
$1$        & $2$        & $4$        & $4$ & $2$ & $1$ \\ \hline
$0$        & $1$        & $3$        & $3$ & $1$ & $0$ \\ \hline
\end{tabular}}
\caption{Labeling of the squares of $\mathcal{T}_n$ by application of Algorithm~1.}\label{equitable_partition}
\end{table}

\medskip

\noindent The divisor matrix of the equitable partition with cells numbered from $0$ to $5$ presented in Table~\ref{equitable_partition} is the matrix
\[
B = \bordermatrix{  & 0 & 1 & 2 & 3 & 4 & 5 \cr
                  0 & 3 & 4 & 2 & 4 & 0 & 2 \cr
                  1 & 2 & 4 & 2 & 2 & 4 & 1 \cr
                  2 & 2 & 4 & 3 & 2 & 4 & 2 \cr
                  3 & 2 & 2 & 1 & 4 & 4 & 2 \cr
                  4 & 0 & 4 & 2 & 4 & 4 & 3 \cr
                  5 & 2 & 2 & 2 & 4 & 6 & 3}.
\]
The characteristic polynomial of $B$ is the polynomial
\[
p_B(x) = x^6 - 21 x^5 + 73 x^4 + 109 x^3 - 686 x^2 + 580 x - 8 .
\]
\end{example}
According to Theorem~\ref{CRS_Th}-\ref{CRS_Th_3}, the eigenvalues of the divisor matrix $B$ obtained in the above
example, which are the ones belonging to
\[
\sigma(B)=\{ 16.24, 3.63, 2.85, 1.17, 0.01, -2.91 \},
\]
are also eigenvalues of $\mathcal{Q}(6)$ and the largest eigenvalue of $B$ is also the largest eigenvalue of
$\mathcal{Q}(6)$. Moreover, taking into account Lemma~\ref{integer_coeficcients}, since $p_B(x)$ is irreducible
in $\mathbb{Z}[x]$, we may conclude $p_B(x) = M_{\mathcal{Q}(6)}(x)$.

\begin{theorem}
Every $n$-Queens' graph, $\mathcal{Q}(n)$, with $n \ge 3$, has an equitable partition with
$\frac{\left(\lceil \frac{n}{2} \rceil+1\right)\lceil \frac{n}{2} \rceil}{2}$
cells, determined by Algorithm~1.
\end{theorem}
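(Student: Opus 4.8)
The plan is to realise the partition produced by Algorithm~1 as the orbit partition of a group of automorphisms of $\mathcal{Q}(n)$. Identify the squares of $\mathcal{T}_n$ with the pairs $(i,j) \in [n]^2$ and let $\Gamma$ be the group of transformations of $[n]^2$ generated by the vertical reflection $\rho(i,j) = (i, n+1-j)$, the horizontal reflection $\sigma(i,j) = (n+1-i, j)$ and the transposition $\tau(i,j) = (j,i)$. These are involutions, $\rho\tau$ is the rotation $(i,j) \mapsto (j, n+1-i)$ (of order $4$), and $\sigma = \tau\rho\tau$, so $\Gamma$ is the dihedral group of order $8$. First I would check that $\Gamma \le \mathrm{Aut}(\mathcal{Q}(n))$: each generator permutes the set of rows, the set of columns and the set of diagonals of $\mathcal{T}_n$ ($\tau$ fixes every sum-diagonal and reverses every difference-diagonal, while $\rho$ and $\sigma$ interchange the two families of diagonals), hence preserves adjacency in $\mathcal{Q}(n)$.

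Next I would invoke the standard fact that the orbit partition of any subgroup $\Gamma \le \mathrm{Aut}(G)$ is equitable: if $u$ and $v = g(u)$ lie in the same orbit $W$, then $g$ maps $N(u)$ bijectively onto $N(v)$ and permutes the orbits, whence $|N(u) \cap W'| = |N(v) \cap W'|$ for every orbit $W'$. Thus it is enough to prove two things: that $[n]^2$ has exactly $\frac{(\lceil n/2\rceil+1)\lceil n/2\rceil}{2}$ orbits under $\Gamma$, and that Algorithm~1 outputs precisely the $\Gamma$-orbit partition.

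For the count I would show that $T = \{(i,j) \in [n]^2 : 1 \le i \le j \le \lceil n/2\rceil\}$ is a fundamental domain for $\Gamma$. Given $(i,j)$, applying $\rho$ when $j > \lceil n/2\rceil$, then $\sigma$ when $i > \lceil n/2\rceil$, then $\tau$ when $i > j$, brings the point into $T$; conversely, once $i, j \le \lceil n/2\rceil$ the only images of $(i,j)$ remaining in $\{1, \dots, \lceil n/2\rceil\}^2$ are $(i,j)$ and $(j,i)$, because $n+1-i \le \lceil n/2\rceil$ forces $i = \lceil n/2\rceil = n+1-i$ (so $n$ odd and $i$ the central index), and inside $T$ one has $i \le j$. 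Hence each orbit meets $T$ exactly once, so the number of orbits is $|T| = \sum_{m=1}^{\lceil n/2\rceil} m = \frac{(\lceil n/2\rceil+1)\lceil n/2\rceil}{2}$. Moreover, Part~I of Algorithm~1 assigns the distinct labels $0, 1, \dots, \frac{(\lceil n/2\rceil+1)\lceil n/2\rceil}{2}-1$ to exactly the squares of $T$ (step $m$ labels the first $m$ squares of column $m$, which is the $j=m$ slice of $T$).

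Finally I would verify that Part~II propagates each label along its $\Gamma$-orbit. The ``vertical cathetus'' used in step~1 of Part~II lies on the axis of $\rho$, the two ``hypotenuses'' in step~2 lie on the two main diagonals (the axes of $\tau$ and of $\tau' = \rho\tau\rho$), and the ``horizontal middle line'' in step~3 is the axis of $\sigma$; the successive reflections therefore enlarge the labelled region from $T$ to the top-left $\lceil n/2\rceil \times \lceil n/2\rceil$ block, then to the top $\lceil n/2\rceil$ rows, then to all of $\mathcal{T}_n$, each newly labelled square receiving the label of the unique element of $T$ in its $\Gamma$-orbit. It then follows that the level sets of the final labelling are exactly the $\Gamma$-orbits, which finishes the argument. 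I expect this last step to be the main obstacle: one must treat the parity of $n$ separately --- for even $n$ the three mirror lines fall between cells, while for odd $n$ they pass through the central row, column and diagonals, where two reflected copies overlap --- and check that in both cases the three reflection stages tile $\mathcal{T}_n$ without gaps and assign consistent labels on the overlaps; consistency is immediate from the uniqueness half of the fundamental-domain argument, since every overlap lies on a mirror line fixed pointwise by the reflection involved. Everything else is routine once the identification with the $\Gamma$-orbit partition is in place.
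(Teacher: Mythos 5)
Your proposal is correct and follows essentially the same route as the paper: the paper's own (very terse) proof also rests on the symmetry of the board, asserting that Algorithm~1 yields cells of sizes $8$, $4$ or $1$ and that ``by symmetry'' every vertex of a cell has the same number of neighbours in each other cell. You have simply made this precise — exhibiting the symmetry group as the dihedral group of order $8$ inside $\mathrm{Aut}(\mathcal{Q}(n))$, invoking the standard fact that the orbit partition of a group of automorphisms is equitable, counting the orbits via the triangular fundamental domain, and checking that the reflections in Part~II reproduce exactly the orbit partition — so your write-up is a fully detailed version of the argument the paper only sketches.
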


\begin{proof}
The application of Algorithm~1 produces a partition $\pi$ of the vertices (squares) of $\mathcal{Q}(n)$ ($\mathcal{T}_n$) into $\frac{\left(\lceil \frac{n}{2} \rceil+1\right)\lceil \frac{n}{2} \rceil}{2}$
cells. Furthermore, from Algorithm~1, we may conclude that each cell of $\pi$ has $8$, $4$ or $1$ vertices (squares) and, by symmetry, each cell induces a regular graph and each vertex in a particular cell
has the same number of neighbors in any other cell.
\end{proof}

\medskip\textbf{Acknowledgments.}
This work is supported by the Center for Research and Development in Mathematics and Applications (CIDMA) through the Portuguese Foundation for Science and Technology (FCT - Fundação para a Ciência e a Tecnologia), reference UIDB/04106/2020. I.S.C. also thanks the support of FCT - Fundação para a Ciência e a Tecnologia via the Ph.D. Scholarship PD/BD/150538/2019.


\begin{thebibliography}{9}

\bibitem{Andersen15}
L. D. Andersen, \emph{Latin squares}, in ``Combinatorics: Ancient and Modern'' (R. Wilson and J.J. Watkins Ed.), Oxford University Press, Oxford, 2015, 251--283. URL: \texttt{https://doi.org/10.1093/acprof:oso/9780199656592.003.0014}.

\bibitem{Ahrens}
W. Ahrens, Mathematische Unterhaltungen und Spiele,  Monatsh. f. Mathematik und Physik \textbf{12} (1901), A41–A42. URL: \texttt{https://doi.org/10.1007/BF01692200}.

\bibitem{BellStevens09}
J. Bell, B. Stevens, \emph{A survey of known results and research areas for $n$-queens}, Discrete Math. \textbf{309} (1) (2009): 1--31. URL: \texttt{https://doi.org/10.1016/j.disc.2007.12.043}.

\bibitem{Bezzel}
M. Bezzel, \emph{Proposal of $8$-queens problem}. Berliner Scachzeitung \textbf{3} (1848): 363.

\bibitem{Burger98}
A.P. Burger, \emph{The queens domination problem}, Doctoral Thesis, University of South Africa, 1998. URL: \texttt{http://hdl.handle.net/10500/16179}.

\bibitem{BurgerCockayneMynhardt94}
A.P. Burger, E.J. Cockayne, C.M. Mynhardt, \emph{Domination number for the queens' graph}, Bull. Inst. Combin. Appl. \textbf{10} (1994): 73--82.

\bibitem{chvatal}
V. Chv\'atal, \emph{Colouring the queen graphs}. URL: \texttt{http://users.encs.concordia.ca/$\sim$chvatal/queengraphs.html}. (Accessed 31 October 2020)

\bibitem{Cockayne1990}
E.J. Cockane, \emph{Chessboard domination problems}. Discrete Math. \textbf{86} (1990): 13--20. URL: \texttt{https://doi.org/10.1016/0012-365X(90)90344-H}.

\bibitem{cds79}
D. Cvetkovi\'{c}, M. Doob and H. Sachs, \emph{Spectra of Graphs}, Academic Press, New York, 1979.

\bibitem{CP84}
D. Cvetkovi\'c and M. Petri\'c, \emph{A table of connected graphs on six vertices}, Discrete Math. {\bf 50} (1984), 37--49. URL: \texttt{https://doi.org/10.1016/0012-365X(84)90033-5}.

\bibitem{CRS10}
D. Cvetkovi\'{c}, P. Rowlinson, S. Simi\'{c}, \emph{An Introduction to the Theory of Graph Spectra}, Cambridge University Press, 2009. URL: \texttt{https://doi.org/10.1017/CBO9780511801518}.

\bibitem{Dmitrii2018}
D. Mikhailovskii, \emph{New explicit solution to the $N$-Queens Problem and its relation to the Millennium Problem}, arXiv:1805.07329 [math.CO], 2018. URL: \texttt{https://arxiv.org/abs/1805.07329}.

\bibitem{Gardner}
M. Gardner, \emph{The Unexpected Hanging And Other Mathematical Diversions}. University of Chicago Press, Chicago, 1991.

\bibitem{GentJeffersenNightingale2017}
I. P. Gent, C. Jefferson, P. Nightingale, \emph{Complexity of $n$-Queens Completion}, J. Artificial Intelligence Res. \textbf{59} (2017): 815--848. URL: \texttt{https://doi.org/10.1613/jair.5512}.

\bibitem{GibbonsWebb97}
P.B. Gibbons, J.A. Webb, \emph{Some New Results for the Queens Domination Problem}, Australas. J. Combin. \textbf{15} (1997): 145--160. URL: \texttt{https://ajc.maths.uq.edu.au/pdf/15/ocr-ajc-v15-p145.pdf}.

\bibitem{HedayatFederer75}
A. Hedayat, W.T. Federer, \emph{On the Nonexistence of Knut Vik Designs for all Even Orders}, Ann. Statist. \textbf{3}(2) (1975): 445--447. URL: \texttt{https://projecteuclid.org/euclid.aos/1176343068}.

\bibitem{Hedayat77}
A. Hedayat, \emph{A complete solution to the existence and nonexistence of Knut Vik designs and orthogonal Knut Vik designs}, J. Combin. Theory Ser. A \textbf{22} (3) (1977): 331--337. URL: \texttt{https://doi.org/10.1016/0097-3165(77)90007-3}.

\bibitem{HwangLih83}
F.K. Hwang, K.-W. Lih, \emph{Latin squares and superqueens}, J. Combin. Theory Ser. A \textbf{34} (1) (1983): 110-114. URL: \texttt{https://doi.org/10.1016/0097-3165(83)90048-1}.

\bibitem{Jaenisch1862}
C.F. De Jaenisch, Applications de l’Analyse Mathematique an Jeu des Echecs, Petrograd, 1862.

\bibitem{Nauck}
F. Nauck, Briewechseln mit Allen f\"ur Alle, Illustrirte Zeiytung \textbf{15}(377) (1950): 182. September 21 ed.

\bibitem{Orlin1977}
J. Orlin, \emph{Contentment in graph theory: Covering graphs with cliques}, Indag. Math. \textbf{80} (5) (1977): 406--424.  URL: \texttt{https://doi.org/10.1016/1385-7258(77)90055-5}.

\bibitem{OstergardWeakley2001}
P.R.J. \"{O}steerg{\aa}rd, W.D. Weakley, \emph{Values of Domination Numbers of the Queen's Graph}, Electronic Journal of Combinatorics, \textbf{8} (2001): \#R29.  URL: \texttt{https://doi.org/10.37236/1573}.

\bibitem{Pauls}
E. Pauls, \emph{Das Maximalproblem der Damen auf dem Schachbrete}, II, Deutsche Schachzeitung. Organ f\"ur das Gesammte Schachleben \textbf{29}(9) (1874): 257--267.

\bibitem{rowmain}
P. Rowlinson, \emph{The main eigenvalues of a graph: a survey}, Appl. Anal. Discrete Math. {\bf 1}(2) (2007), 445--471. URL: \texttt{https://doi.org/10.2298/AADM0702445R}.

\bibitem{RaghavanVenkatesan1987}
V. Raghavan, S.M. Venkatesan, \emph{On bounds for a board covering problem}, Inform. Process. Lett. \textbf{25}(5) (1987): 281--284. URL: \texttt{https://doi.org/10.1016/0020-0190(87)90201-8}.

\bibitem{Ball1892}
W.W. Rouse Ball, Mathematical Recreations and Problems of Past and Present Times. MacMillan, London, 1892.

\bibitem{stanley87}
R.P. Stanley, \emph{A bound on the spectral radius of graphs with $e$ edges}, Linear Algebra Appl. \textbf{87} (1987): 267--269. URL: \texttt{https://doi.org/10.1016/0024-3795(87)90172-8}.

\bibitem{Vasquez04}
M. Vasquez, \emph{New Results on the Queens\_$n^2$ Graph Coloring Problem}, J. Heuristics \textbf{10} (2004): 407 -- 413. URL: \texttt{https://doi.org/10.1023/B:HEUR.0000034713.28244.e1}.

\bibitem{Weakley95}
W.D. Weakley, \emph{Domination in the queens' graph}, in Graph Theory, Combinatorics, and Algorithms: Proceedings of the Seventh Quadrennial International Conference on the Theory and Applications of Graphs, Volume 2, Western Michigan University, Y. Alavi \& A. Schwenk (eds.), Wiley (1995), 1223--1232.
\end{thebibliography}
\end{document}